\def\ip#1#2{{\left\langle#1,#2\right\rangle}}
\def\lb#1#2{{\left[#1,#2\right]}}
\def\norm#1{{\left\|#1\right\|}}
\newcommand\ad{\operatorname{ad}}
\newcommand\tr{\operatorname{tr}}
\newcommand\divg{\operatorname{div}}
\newcommand\scfunc{\operatorname{sc}}
\newcommand\rcfunc{\operatorname{rc}}
\newcommand\Law{\operatorname{Law}}
\def\lt#1{T_{#1}\mathsf{L}_{{#1}^{-1}}}
\def\lte#1{T_e\mathsf{L}_{#1}}
\def\M{\mathcal{M}}
\def\d{\mathrm{d}}
\def\G{\mathsf{G}}
\def\g{\mathfrak{g}}
\def\KL#1#2{{\operatorname{KL}\left(#1\|#2\right)}}
\newtheorem{assumption}{Assumption}
\title[Convergence of Kinetic Langevin Monte Carlo on Lie groups]{Convergence of Kinetic Langevin Monte Carlo on Lie groups}
\begin{document}
\maketitle
\begin{abstract}
    Explicit, momentum-based dynamics for optimizing functions defined on Lie groups was recently constructed, based on techniques such as variational optimization and left trivialization. We appropriately add tractable noise to the optimization dynamics to turn it into a sampling dynamics, leveraging the advantageous feature that the trivialized momentum variable is Euclidean despite that the potential function lives on a manifold. We then propose a Lie-group MCMC sampler, by delicately discretizing the resulting kinetic-Langevin-type sampling dynamics. The Lie group structure is exactly preserved by this discretization. Exponential convergence with explicit convergence rate for both the continuous dynamics and the discrete sampler are then proved under $W_2$ distance. Only compactness of the Lie group and geodesically $L$-smoothness of the potential function are needed. To the best of our knowledge, this is the first convergence result for kinetic Langevin on curved spaces, and also the first quantitative result that requires no convexity or, at least not explicitly, any common relaxation such as isoperimetry.
\end{abstract}

\begin{keywords}
    left-trivialized kinetic Langevin dynamics; momentum Lie group sampler; nonasymptotic error bound; nonconvex; no explicit isoperimetry
\end{keywords}

\section{Introduction}
Sampling is a classical field that is nevertheless still rapidly progressing, with new results on quantitative and nonasymptotic guarantees, new algorithms, and appealing machine learning applications. One type of methods is constructed by, or at least interpretable as discretizing a continuous time SDE whose equilibrium is the target distribution. Some famous examples are Langevin Monte Carlo (LMC), which originates from overdamped Langevin SDE, and kinetic Langevin Monte Carlo (KLMC), which originates from kinetic Langevin SDE.  Both LMC and KLMC are widely used gradient-based methods. Similar to the fact that momentum helps gradient descent \citep{nesterov2013introductory}, KLMC can be interpreted as a momentum version of LMC. 

Nonasymptotic error analysis of momentumless Langevin algorithm in Euclidean spaces have be established by a collection of great works \citep[e.g.,][]{dalalyan2017theoretical,cheng2018convergence,durmus2019analysis,vempala2019rapid,li2021sqrt}. Samplers based on kinetic Langevin are more difficult to analyze, largely due to the degeneracy of noise as it is only added to the momentum but not the position, but many remarkable progress  \cite[e.g.,][]{shen2019randomized, dalalyan2017theoretical, ma2021there, zhang2023improved, yuan2023markov, altschuler2023faster} have been made too, showcasing the benefits of momentum. More discussion of sampling in Euclidean spaces can be found in Sec. \ref{sec_related_work_Euclidean}.

Sampling on a manifold that does not admit a global coordinate is much harder. Even the exact numerical implementation of Brownian motion on it is difficult for general manifolds \citep{hsu2002stochastic}. Seminal results for general Riemannian manifolds with nonasymptotic error guarantees include \cite{cheng2022efficient,wang2020fast,gatmiry2022convergence}, leveraging either a discretization of the Brownian motion or assuming an oracle that exactly implements Brownian motion (which is feasible for certain manifolds). However, all these results focused on the momentumless case.

For a problem closely related to sampling, namely optimization though, several great results already exist for momentum accelerated optimization on Riemannian manifold, including both in continuous time \citep[e.g.,][]{alimisis2020continuous} and in discrete time \citep[e.g.,][]{ahn2020nesterov}. It is possible to formulate sampling as an optimization problem, where the convergence of a sampling dynamics/algorithm is characterized as conducting optimization in the infinite-dimensional space of probability densities \citep[e.g.,][]{vempala2019rapid}. However, when momentum is introduced to the manifold sampling problem, we have to be considering the convergence of densities defined on the tangent bundle of the manifold. This makes the analysis much more challenging. To the best of our knowledge, no convergence result exists for kinetic Langevin on curved spaces, in neither continuous or discrete time cases. See Sec. \ref{sec_momentum_meets_curved_spaces} for more discussions of those difficulties.

This paper considers a special class of manifolds, known as the Lie groups. A Lie group is a differential manifold with group structure. Many widely used manifolds in machine learning indeed have Lie group structures; one example is the set of orthogonal matrices (with det=1), i.e. $\mathsf{SO}(n)$; see e.g., \citet{lezcano2019cheap, tao2020variational, kong2022momentum} for optimization on $\mathsf{SO}(n)$. However, fewer results have been established for sampling on Lie groups, especially with momentum. The interesting work by \cite{arnaudon2019irreversible} might be the closest to ours in this regard, but the kinetic Langevin dynamics that we are proposing was not explicitly worked out, nor its discretization, and neither the uniqueness of invariant distribution or a convergence guarantee was provided, let alone convergence rate. In fact, similar to the general manifold case, we are unaware of prior construction of kinetic-Langevin-type samplers for Lie groups, and certainly not performance quantification.
\subsection{A brief summary of main results}
Our main contributions are:
\begin{enumerate}
    \item 
        We provide the first momentum version of Langevin-based algorithm for sampling on the curved spaces of Lie groups, with rigorous and quantitative analysis of the geometric ergodicities of both the sampling dynamics (i.e. in continuous time) on which it is based, and the sampling algorithm per se (i.e. in discrete time) 
    \item
        The exponential convergence on Lie groups is proved under weaker assumptions than considered in the literature for general manifolds, as we leverage their additional group structure. Only compactness of the manifold and smoothness of the log density are used. No (geodesic-)convexity or explicit isoperimetric inequalities are needed.
    \item
        Our algorithm is fully implementable in the sense that it requires no implementation of Brownian motion on curved spaces. It is also computationally efficient as it is based on explicit numerical discretization that preserves the manifold structure; that is, unlike commonly done, no extra projection back to the manifold (which can be computationally expensive) is needed. Note that our discretization is different from exponential integrators commonly used in Euclidean cases, and requires new analysis.
\end{enumerate}

\noindent
More specifically, consider sampling from a target distribution with density $\propto \rho(g)$. Letting $U=-\log\rho$ and calling it potential function ($U: \G\rightarrow \mathbb{R}$, where $\G$ is the Lie group), the sampling dynamics we construct is:
\begin{equation}
\label{eqn_sampling_SDE_ad_star_vanish}
    \begin{cases}
        \dot{g}=\lte{g} \xi\\
        \d \xi=-\gamma\xi \d t -\lt{g}(\nabla U(g))\d t+\sqrt{2\gamma}\d W_t
    \end{cases}
\end{equation}
It admits the following invariant distribution 
\begin{equation}
    \nu_*(g, \xi)=\frac{1}{Z}\exp \left(-U(g)-\frac{1}{2}\ip{\xi}{\xi}\right) \d g \d \xi
    \label{eqn_Gibbs_TG}
\end{equation}
where $\d g$ is left Haar measure and $\d \xi$ is Lebesgue. Its $g$ marginal is the target distribution.

The sampling algorithm we propose is:

\SetKwInOut{KwParameter}{Parameter}
\SetKwInOut{KwInitialization}{Initialization}
\SetKwInOut{KwOutput}{Output}
\begin{algorithm2e}[H]
    \KwParameter{step size $h>0$, friction $\gamma>0$, number of iterations $N$}
    \KwInitialization{$g_0\in \G$, $\xi_0= 0$}
    \KwOutput{A sample from $Z^{-1} \exp(-U(g))$}
    \For{$k=0,\cdots,N-1$}{
    $\xi_{k+1}=\exp(-\gamma h)\xi_k -\frac{1-\exp(-\gamma h)}{\gamma} \lt{g} \nabla U(g_k)+\mathcal{N}(0, 1-\exp(-2\gamma h))$\\
    $g_{k+1}=g_k\exp(h\xi_k)$\\
    }
     \Return{$g_N$}
     \caption{Kinetic Langevin Monte Carlo (KLMC) sampler on Lie groups}
     \label{algo_sampling_Lie_group}
\end{algorithm2e}

The convergence guarantee for the sampling dynamics (eq.\ref{eqn_sampling_SDE_ad_star_vanish}; continuous time) is 
\begin{theorem}[Convergence of Kinetic Langevin dynamics on Lie groups (Informal version)]
    Suppose the Lie group $\G$ is compact, finite-dimensional and the potential function $U$ is $L$-smooth, then we have the exponential convergence of sampling dynamics \eqref{eqn_sampling_SDE_ad_star_vanish} to the Gibbs distribution $\nu_*$, i.e.,
    \begin{align*}
    W_2(\nu_t, \nu_*)\le C_\rho e^{-ct} W_\rho(\nu_0, \nu_*)
    \end{align*}
    where $\nu_t$ is the joint density of $g(t)$ and $\xi(t)$. For notations and more details, see Thm. \ref{thm_SDE_error_W2}.
\end{theorem}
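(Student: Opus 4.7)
The plan is to prove contraction by a synchronous coupling combined with a carefully designed Lyapunov function on the joint state space. Let $(g_t, \xi_t)$ and $(\tilde g_t, \tilde \xi_t)$ be two solutions of \eqref{eqn_sampling_SDE_ad_star_vanish} driven by the same Brownian motion. Because noise enters only through the momentum, which lives in the flat Lie algebra $\g$, the Brownian increments cancel in the difference process, and $\eta_t := \xi_t - \tilde \xi_t$ satisfies the deterministic ODE
\begin{equation*}
    \dot \eta_t = -\gamma\, \eta_t - \bigl(T_{g_t} L_{g_t^{-1}}\nabla U(g_t) - T_{\tilde g_t} L_{\tilde g_t^{-1}}\nabla U(\tilde g_t)\bigr),
\end{equation*}
and geodesic $L$-smoothness of $U$ bounds the bracketed term by $L\, d_\G(g_t, \tilde g_t)$.

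Next I would introduce a twisted quadratic Lyapunov function of Dalalyan--Eberle type on the coupled state,
\begin{equation*}
    V = a\, d_\G(g, \tilde g)^2 + 2b\, \ip{\mathcal{T}(g, \tilde g)}{\eta} + c\, \norm{\eta}^2,
\end{equation*}
where $\mathcal{T}(g, \tilde g) \in \g$ is the left-trivialized initial velocity of a minimizing geodesic from $g$ to $\tilde g$ (so $\norm{\mathcal{T}} = d_\G(g, \tilde g)$). With $a, b, c > 0$ chosen suitably and $\gamma$ taken large relative to $\sqrt L$, the goal is to establish $\dot V \le -\kappa V$ along the coupled flow for an explicit $\kappa > 0$. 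The derivative decomposes into three pieces: the friction yields $-2\gamma c\, \norm{\eta}^2$; the gradient difference contributes an indefinite $O(L\, d_\G\,\norm{\eta})$ term via smoothness; and the position block gives $2a \ip{\mathcal{T}}{\eta}$ plus curvature corrections from applying Jacobi-field (or Riccati) calculus to $d_\G^2$ along the coupled flow. Compactness of $\G$ uniformly caps both the adjoint action $\Ad$ and the sectional curvature, which lets the indefinite cross-terms be absorbed and the quadratic form closed without appealing to convexity of $U$.

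The main obstacle is this position block, for two reasons. First, $d_\G^2$ is only smooth off the cut locus, so the coupling computation has to be justified on its (codimension $\ge 2$) complement or reformulated by pulling the difference back into $\g$ via the Lie-group logarithm, with Baker--Campbell--Hausdorff corrections controlling the discrepancy. Second, non-commutativity injects curvature terms (through $\Ad$ acting on the momenta) into the second derivative of $d_\G^2$ along the flow, and, unlike in Euclidean KLMC, these terms have no definite sign; it is precisely compactness that bounds them uniformly, explaining why neither convexity nor an isoperimetric inequality is needed. Once $V_t \le e^{-\kappa t} V_0$ is established along the synchronous coupling, taking expectations, optimizing over initial couplings of $\nu_0$ and $\nu_*$, and using two-sided equivalence of $\sqrt V$ with the modified metric $\rho$ on one side and with $\bigl(d_\G^2 + \norm{\eta}^2\bigr)^{1/2}$ on the other, yields the stated $W_2(\nu_t, \nu_*) \le C_\rho\, e^{-ct}\, W_\rho(\nu_0, \nu_*)$ with explicit rate $c$ depending on $L$, $\gamma$, and geometric constants of $\G$.
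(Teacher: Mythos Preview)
Your plan has a genuine gap: a purely synchronous coupling together with a quadratic, Dalalyan-type Lyapunov function cannot produce exponential contraction here, because the only assumption on $U$ is $L$-smoothness. Under synchronous coupling the noise cancels exactly, so the coupled pair evolves by a deterministic flow, and to get $\dot V \le -\kappa V$ you need the quadratic form in $(d_\G,\norm{\eta})$ to be negative definite. The friction supplies $-2\gamma c\,\norm{\eta}^2$, and the cross term $2b\,\ip{\mathcal{T}}{\eta}$ links position and momentum, but the \emph{only} source of a term of order $d_\G^2$ with a sign is $-2b\,\ip{\mathcal{T}}{T_g L_{g^{-1}}\nabla U(g)-T_{\tilde g}L_{\tilde g^{-1}}\nabla U(\tilde g)}$. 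With merely $L$-smoothness this term is bounded by $\pm Lb\,d_\G^2$, i.e.\ it can have the wrong sign; strong (geodesic) convexity is precisely what would force it to be $\ge m\,b\,d_\G^2$. Compactness caps curvature and $\Ad$, so it controls the \emph{geometric} correction terms you mention, but it gives you nothing to absorb a potentially positive $+Lb\,d_\G^2$ contribution from a nonconvex $U$. Concretely, for a double-well $U$ two synchronously coupled trajectories can sit in separate wells indefinitely and never contract.

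The paper's proof addresses exactly this by abandoning pure synchronous coupling. It uses a \emph{mixed} synchronous/reflection coupling (Sec.~4.1) together with a non-quadratic semi-distance $\rho=f(r)\,(1+\beta\norm{\mu}^2)$ where $f$ is strictly concave and $r$ is a twisted distance built from $Z=\log(\hat g^{-1}g)$ and $Q=Z+\gamma^{-1}\mu$. In the region where neither friction nor drift helps, reflection coupling makes $\norm{Q_t}$ a genuine diffusion, and the It\^o correction produces a $4\gamma^{-1}f''(r)<0$ term in the semimartingale decomposition of $e^{ct}\rho_t$ (Lemma~\ref{lemma_martingale_decomposition}); this is the missing negative contribution that substitutes for convexity. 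The phase space is partitioned into three regions (Fig.~\ref{fig_coupling_region}), with reflection coupling active only where it is needed, and the parameters $\alpha,\beta,R,f$ are tuned (Thm.~\ref{thm_parameters}) so that the drift $K$ is nonpositive in each region. Your observation about the cut locus is handled in the paper by showing the bad set $N$ has measure zero under the evolved law (Lemma~\ref{lemma_d_log}, Cor.~\ref{cor_absolute_continuous}). If you want to salvage your outline, the minimal change is to replace the synchronous coupling by a reflection coupling in the $Q$-direction and to replace the quadratic $V$ by a concave transform of the twisted distance, which is essentially the paper's construction.
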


The nonasymptotic error bound for our sampler (Alg.\ref{eqn_sampling_SDE_ad_star_vanish}; discrete time) is 
\begin{theorem}[Convergence of Kinetic Langevin Sampler on Lie groups (Informal version)]
    Suppose the Lie group $\G$ is compact, finite-dimensional and the potential function $U$ is $L$-smooth, then we have the exponential convergence of our KLMC algorithm to the Gibbs distribution, i.e.,
    \begin{align*}
        W_2(\tilde{\nu}_k, \nu_*)\le C_\rho e^{-ckh} W_\rho(\nu_0, \nu_*)+\mathcal{O}(h^{1/2})
    \end{align*}
    where $\tilde{\nu}_k$ is the density for the sampler at step $k$, $\nu_*$ is that target distribution in Eq. \ref{eqn_Gibbs_TG} and $c$ is the contraction rate for the continuous dynamics. For notations $W_\rho$, $C_\rho$, a more explicit expression of $\mathcal{O}(h^{1/2})$, and more technical details, see Thm. \ref{thm_global_error_W2}.
\end{theorem}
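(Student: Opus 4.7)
The plan is to prove the discrete-time error bound by combining the continuous-time $W_\rho$-contraction from Thm.\ \ref{thm_SDE_error_W2} with a one-step discretization error estimate, assembled via a synchronous coupling between the true sampling SDE \eqref{eqn_sampling_SDE_ad_star_vanish} and a carefully chosen interpolating SDE whose exact $h$-step flow reproduces one iteration of Algorithm~\ref{algo_sampling_Lie_group}.

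First I would identify the correct interpolating SDE on each interval $[kh,(k+1)h]$: take \eqref{eqn_sampling_SDE_ad_star_vanish} and freeze $\xi$ in the $g$-drift at its left endpoint $\xi_{kh}$, and freeze $g$ in the gradient term at $g_{kh}$. With $\xi$ frozen, $\dot g_t = T_e L_{g_t}\xi_{kh}$ integrates exactly to $g_t = g_{kh}\exp((t-kh)\xi_{kh})$, which stays on $\G$ by construction; and the $\xi$-equation reduces to a constant-drift Ornstein--Uhlenbeck SDE whose closed-form solution at $t=(k+1)h$ matches the Gaussian update in Algorithm~\ref{algo_sampling_Lie_group} verbatim. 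Thus a single algorithm step is exact pushforward under this interpolating SDE, and only the gap between it and \eqref{eqn_sampling_SDE_ad_star_vanish} needs to be bounded.

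Second, I would apply a synchronous coupling (same Brownian motion, same starting state) between the true SDE and the interpolating SDE on one step and control $\mathbb{E}[\|\xi_t - \tilde\xi_t\|^2 + d(g_t, \tilde g_t)^2]$ by Gr\"onwall, where $d$ denotes the Riemannian distance on $\G$. The three key ingredients are: (i) $\mathbb{E}\|\xi_t - \xi_{kh}\|^2 = \mathcal{O}(h)$, from the explicit OU increment together with a priori moment bounds on $\|\xi_t\|$; (ii) $d(g_t, g_{kh})^2 \le (t-kh)\int_{kh}^{t}\|\xi_s\|^2\,ds = \mathcal{O}(h^2)$, from the fact that $s\mapsto g_s$ has left-trivialized velocity $\xi_s$, so the length of the connecting curve is controlled by Cauchy--Schwarz; and (iii) $\|T_gL_{g^{-1}}\nabla U(g_t) - T_gL_{g^{-1}}\nabla U(g_{kh})\|^2 \le L^2 d(g_t, g_{kh})^2$, by the geodesic $L$-smoothness assumption. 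Together with uniform moment bounds on $\|\xi_t\|$ that follow from compactness of $\G$ and the dissipativity of the $\xi$-SDE, these yield a one-step $W_2$ error of order $h^{3/2}$.

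Third, I would combine the one-step local error with the continuous-time contraction $W_\rho(\mathcal{P}_h \nu, \nu_*) \le C_\rho e^{-ch} W_\rho(\nu,\nu_*)$ via the triangle inequality in $W_\rho$ (using the equivalence between $W_\rho$ and $W_2$ that follows from compactness of the phase space) and iterate over the $k$ steps. The geometric series $\sum_{j=0}^{k-1} e^{-cjh}\cdot h^{3/2}$ is bounded by $h^{3/2}/(1-e^{-ch}) = \mathcal{O}(h^{1/2})$, giving the advertised $W_2(\nu_k,\nu_*) \le C_\rho e^{-ckh} W_\rho(\nu_0,\nu_*) + \mathcal{O}(h^{1/2})$. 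The main obstacle I anticipate is in step two: controlling the synchronous coupling in the Riemannian distance on $\G$ when the two $g$-trajectories evolve under distinct time-dependent left-invariant vector fields. Because $\G$ is generally non-commutative, neither the Lie-algebra displacement $\int_{kh}^{t}(\xi_s-\xi_{kh})\,ds$ nor the group commutator $g_t \tilde g_t^{-1}$ equals the geodesic distance, so a careful comparison--likely using left-invariance of the metric together with a Baker--Campbell--Hausdorff-style expansion or a parallel-transport argument along the short geodesic joining $g_t$ and $\tilde g_t$--will be required to close the Gr\"onwall estimate and turn the coupling bound into a $W_2$ bound.
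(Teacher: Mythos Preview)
Your overall architecture---local error via an interpolating SDE plus continuous-time contraction, then geometric summation---matches the paper's. But step three, where you glue the pieces together, has a genuine gap.

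You write that you will ``combine the one-step local error with the continuous-time contraction \dots\ via the triangle inequality in $W_\rho$ (using the equivalence between $W_\rho$ and $W_2$ that follows from compactness of the phase space).'' Neither of these is available. First, $\rho$ is by construction only a \emph{semi}-distance (see Sec.~\ref{sec_semi_distance}): it does not satisfy the triangle inequality, and hence neither does $W_\rho$. Second, the phase space is $\G\times\g$, and $\g$ is a vector space, so it is not compact; the paper only proves the one-sided control $W_2^2\le C_\rho W_\rho$ (Cor.~\ref{cor_equivalence_W2}), never an equivalence. Consequently you cannot simply add the local $W_2$ error to the $W_\rho$ contraction. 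The paper's fix is Lemma~\ref{lemma_triangular_ineq_rho}: a \emph{modified} triangle inequality of the form
\[
\rho((\hat g,\hat\xi),(\tilde g,\tilde\xi))\le \bigl[1+A_1\,d((g,\xi),(\tilde g,\tilde\xi))\bigr]\,\rho((\hat g,\hat\xi),(g,\xi))+A_2\,d^2((g,\xi),(\tilde g,\tilde\xi)),
\]
which has a multiplicative cross term. Handling that cross term under expectation requires a uniform bound on $\mathbb{E}\rho^2$ (Cor.~\ref{cor_rho_upper_bounded}), which in turn rests on the moment bounds of Lemma~\ref{lemma_bound_xi_power}. Without this machinery the iteration in your third step does not close.

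A secondary point on step one: your interpolating SDE freezes $\xi$ in the $g$-drift at its \emph{left} endpoint, producing $g_{k+1}=g_k\exp(h\xi_k)$. The scheme actually analyzed (Eq.~\eqref{eqn_discrete_splitting}) updates $\xi$ first and then sets $g_{k+1}=g_k\exp(h\tilde\xi_h)$ with the \emph{new} momentum, so a single ``freeze $\xi$'' SDE does not reproduce it. The paper instead derives a genuine shadow SDE (Lemma~\ref{lemma_SDE_splitting}) whose time-$h$ law equals one algorithm step; its $g$-equation carries extra $\d\exp$ and $\d^2\exp$ terms coming from differentiating $g_0\exp(t\tilde\xi_t)$ with It\^o's formula. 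This is also what resolves the non-commutativity obstacle you flag at the end: rather than BCH or parallel transport, the paper applies It\^o's formula to $d^2(g_t,\tilde g_t)=\|\log(g_t^{-1}\tilde g_t)\|^2$ and uses the explicit $\d\log$ identities of Cor.~\ref{cor_log_property}, after showing the non-differentiability set $N$ is null (Lemma~\ref{lemma_d_log}).
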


\section{Preliminaries}
\label{sec_preliminaries}

\subsection{Lie group and Lie algebra}
\label{sec_Lie_group}
A \textbf{Lie group}, denoted as $G$, is a differentiable manifold with a group structure. A \textbf{Lie algebra} is a vector space with a bilinear, alternating binary operation that satisfies the Jacobi identity, known as Lie bracket. The tangent space at $e$ (the identity element of the group) is a \textbf{Lie algebra}, denoted as $\g:=T_e \G$. The dimension of the Lie group $\G$ will be denoted by $m$. Lie groups considered when proving the convergence of kinetic Langevin dynamics and kinetic Langevin sampler will be assumed to satisfy the following:

\begin{assumption}[General assumptions on geometry]
\label{assumption_general}
    We assume the Lie group $\G$ is finite-dimensional, connected, and compact.
\end{assumption}

The property we will use to handle momentum on Lie groups is called \textbf{left-trivialization}, which is an operation from the group structure and does not exist on a general manifold. Left group multiplication $\mathsf{L}_g: \hat{g}\to g\hat{g}$ is a smooth map from the Lie group to itself and its tangent map $T_{\hat{g}} \mathsf{L}_g: T_{\hat{g}} \G\to T_{g\hat{g}} \G$ is a one-to-one map. As a result, for any $g\in \G$, we can represent the vectors in $T_g \G$ by $\lte{g} \xi$ for $\xi \in T_e \G$. This operation gives us an optimization dynamics on Lie groups in the next section.
\subsection{Optimization dynamics based on left trivialization}
\label{sec_optimization_dynamics}
\cite{tao2020variational} proposed a Lie group optimizer based on constructing the following ODE, which performs optimization in continuous time on a general Lie group:
\vspace{-0.2cm}
\begin{equation}
    \begin{cases}
        \dot{g}=\lte{g} \xi\\
        \dot{\xi}=-\gamma\xi+\ad_\xi^*\xi-\lt{g}(\nabla U(g))
    \end{cases}
    \label{eqn_optimization_ODE}
\end{equation}
$\xi$ is the left-trivialized momentum (intuitively it could be thought of as angular momentum), and $\lte{g}\xi$ is the momentum (intuitively, think it as $g\xi$, i.e. position times angular momentum gives momentum\footnote{Technically, momentum is the dual of velocity, and these quantities should called velocity instead for rigor, but we will stick to the word `momentum' by convention.}). Potential $U:\G\to \mathbb{R}$ is the objective function of optimization. $\nabla U$ is the Riemannian gradient of the potential, and $\lt{g}(\nabla U(g))$ is its left-trivialization. This dynamics essentially models a damped mechanical system, where the total energy (sum of some kinetic energy term and potential energy $U$) is drained by the frictional forcing term $-\gamma \xi$, and $U$ is minimized at $t\to\infty$. Indeed, this is how \citet{tao2020variational} proved this ODE converges to a local minimum of $U$. The kinetic energy needs more discussion. In the Euclidean space, we have a global inner product, which is not true in curved spaces. In the Lie group case, we first define an inner product $\ip{\cdot}{\cdot}$ on $\g$, which is flat, and then move it around by the differential of left multiplication, i.e., the inner product at $T_g \G$ is for $\xi_1,\xi_2 \in T_g \G$, $\ip{\xi_1}{\xi_2}:=\ip{\lt{g} \xi_1}{\lt{g} \xi_2}$. As a result, $\frac{1}{2}\ip{\xi}{\xi}$ is the \textbf{kinetic} energy. $\gamma$ provides dissipation to the total energy (the sum of kinetic energy and potential energy). In general, $\gamma$ can be a positive function depending on time (e.g., NAG-C). However, we only consider the case $\gamma$ as a constant for simplicity.

For curved space, an additional term $\ad_\xi^*\xi$ that vanishes in Euclidean space shows up in Eq. \eqref{eqn_optimization_ODE}. It could be understood as a generalization of Coriolis force that accounts for curved geometry and is needed for free motion, see Sec. \ref{sec_discussion_ad_star} for more discussion. The \textbf{adjoint operator} $\ad:\g \times \g\to \g$ is defined by $\ad_{X} Y:=\lb{X}{Y}$. Its dual, known as the \textbf{coadjoint operator} $\ad^*:\g\times \g \to \g$, is given by $\ip{\ad^*_X Y}{Z}=\ip{Y}{\ad_X Z}, \forall Z \in \g$.

\subsection{Choice of inner product on $\g$}
\label{sec_inner_product}
The term $\ad^*_\xi \xi$ in the optimization ODE \eqref{eqn_optimization_ODE} (and also the later sampling SDE Eq.\ref{eqn_sampling_SDE}) is a quadratic term and it will make the numerical discretization that will be considered later difficult. Another more intrinsic drawback of this term is, it depends on the Riemannian metric, and indicates an inconsistency between the Riemannian structure and the group structure, i.e., the exponential map from the Riemannian structure is different from the exponential map from the group structure. See Sec. \ref{sec_about_ad_self_adjoint} for more details. Fortunately, the following lemma shows a special metric on $\g$ can be chosen to make the term $\ad^*_\xi \xi$ vanish.
\begin{lemma}[$\ad$ skew-adjoint \citep{milnor1976curvatures}]
    \label{lemma_ad_self_adjoint}
    Under Assumption \ref{assumption_general}, there exists an inner product on $\g$ such that the operator $\ad$ is skew-adjoint, i.e., $\ad^*_\xi=-\ad_\xi$ for any $\xi \in \g$. 
\end{lemma}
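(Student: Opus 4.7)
The plan is to invoke the classical averaging argument that produces a bi-invariant metric on a compact Lie group, and then differentiate $\mathrm{Ad}$-invariance at the identity to obtain skew-adjointness of $\mathrm{ad}$.

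First I would pick any inner product $\langle \cdot, \cdot \rangle_0$ on the finite-dimensional vector space $\mathfrak{g} = T_e \mathsf{G}$; such an inner product always exists. Then, using compactness of $\mathsf{G}$ (Assumption \ref{assumption_general}), the Haar measure $\d g$ is finite, and I may normalize it so that $\int_{\mathsf{G}} \d g = 1$. Define the averaged form
\begin{equation*}
    \langle X, Y \rangle \;:=\; \int_{\mathsf{G}} \langle \mathrm{Ad}_g X, \mathrm{Ad}_g Y \rangle_0 \, \d g, \qquad X, Y \in \mathfrak{g}.
\end{equation*}
Bilinearity and symmetry are inherited from $\langle \cdot, \cdot \rangle_0$. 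Positive definiteness follows because the integrand is nonnegative and, for $X \neq 0$, the map $g \mapsto \langle \mathrm{Ad}_g X, \mathrm{Ad}_g X \rangle_0$ is continuous and strictly positive at $g = e$ (as $\mathrm{Ad}_e = \mathrm{id}$), hence strictly positive on an open neighborhood of $e$, giving a strictly positive integral. So $\langle \cdot, \cdot \rangle$ is indeed an inner product on $\mathfrak{g}$.

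Next I would verify $\mathrm{Ad}$-invariance. For any $h \in \mathsf{G}$, using $\mathrm{Ad}_g \mathrm{Ad}_h = \mathrm{Ad}_{gh}$ and left-invariance of Haar measure,
\begin{equation*}
    \langle \mathrm{Ad}_h X, \mathrm{Ad}_h Y \rangle
    = \int_{\mathsf{G}} \langle \mathrm{Ad}_{gh} X, \mathrm{Ad}_{gh} Y \rangle_0 \, \d g
    = \int_{\mathsf{G}} \langle \mathrm{Ad}_{g'} X, \mathrm{Ad}_{g'} Y \rangle_0 \, \d g'
    = \langle X, Y \rangle.
\end{equation*}
Finally, I would differentiate this identity along a one-parameter subgroup. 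Pick $Z \in \mathfrak{g}$ and set $h(t) = \exp(tZ)$, so that $\tfrac{d}{dt}\big|_{t=0} \mathrm{Ad}_{h(t)} X = \mathrm{ad}_Z X$. Differentiating $\langle \mathrm{Ad}_{h(t)} X, \mathrm{Ad}_{h(t)} Y \rangle = \langle X, Y \rangle$ at $t = 0$ yields
\begin{equation*}
    \langle \mathrm{ad}_Z X, Y \rangle + \langle X, \mathrm{ad}_Z Y \rangle = 0,
\end{equation*}
which is exactly $\mathrm{ad}_Z^* = -\mathrm{ad}_Z$. Renaming $Z \to \xi$ gives the claim.

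There is no serious obstacle; the only point requiring care is the existence and finiteness of the Haar measure, which is guaranteed by compactness, and the positive definiteness of the averaged form, handled by the continuity argument above. Connectedness is not strictly needed for this lemma, but is harmless given Assumption \ref{assumption_general}.
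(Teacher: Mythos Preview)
Your proposal is correct and is precisely the classical averaging construction that the paper invokes via the Milnor citation and spells out in Remark~\ref{rmk_inner_product_explicit_expression}. One small slip: the change of variable $g' = gh$ in your $\mathrm{Ad}$-invariance step uses \emph{right}-invariance of the Haar measure, not left-invariance as you wrote; on a compact group the two coincide, so the argument is unaffected.
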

Rmk. \ref{rmk_inner_product_explicit_expression} in Sec. \ref{sec_about_ad_self_adjoint} gives an explicit expression for this inner product. Under this inner product, $\ad^*_\xi \xi=-\ad_\xi \xi=\lb{\xi}{\xi}=0$ because of the skew-symmetricity of the Lie-bracket. As a result, we can choose this inner product to ensure $\ad^*_\xi \xi$ vanishes for any $\xi\in \g$. This choice will be used throughout the rest of this paper. The diameter of $\G$ under such metric will be denoted as $D$.

Under the left-invariant metric induced by this inner product on $\mathsf{g}$, the Lie group always has non-negative sectional curvature (Sec. \ref{sec_Riemannian_structure}). Interestingly, different opinions exist about whether positive or negative curvatures help optimization or sampling. Positive curvatures help the quantification of discretization error via a modified cosine rule \citep{alimisis2020continuous}, while negative curvatures make it easier to have geodesically convex potential.
\vspace{-0.2cm}

\subsection{Assumptions on the potential function}
\vspace{-0.2cm}

\label{sec_assumptions_potential}
After selecting our inner product on $\g$ and using it to induce a left-invariant metric on $\G$, the Riemannian structure on $\G$ enables us to define distance and gradient on $\G$. With these, we can make the following smoothness assumption on the potential function $U$:
\begin{assumption}[$L$-smoothness]
\label{assumption_L_smooth}
Under the left-invariant metric induced by the inner product in Lemma \ref{lemma_ad_self_adjoint}, there exist constants $L\in (0,\infty )$, s.t.
    \begin{align*}
    \norm{\lt{g}\nabla U(g)-\lt{\hat{g}}\nabla U(\hat{g})}&\le Ld(g,\hat{g}) \quad\forall g, \hat{g}\in \G
    \end{align*} 
\end{assumption}
Although here we compare gradients using left-trivialization, Lemma \ref{lemma_L_smooth_equivalence} shows the $L$-smoothness defined here is the same as the commonly used geodesic $L$-smoothness based on parallel transport (Def. \ref{def_geodesic_L_smooth}), given the left-invariant metric in Lemma \ref{lemma_ad_self_adjoint}.

\vspace{-0.4cm}

\section{The SDE for sampling dynamics on Lie groups}
\vspace{-0.2cm}
\label{sec_sampling_dynamics}
In Euclidean space, when noise is added to momentum gradient descent, we have kinetic Langevin sampling SDE. Thanks to that our $\xi$ is in a flat space, to obtain a Lie group generalization we add noise analogously to the optimization ODE Eq.\eqref{eqn_optimization_ODE} to obtain the following SDE on $\G\times\g$ ,whose invariant distribution is the Gibbs distribution (Thm. \ref{thm_invariant_distribution_Gibbs}):
\begin{equation}
\label{eqn_sampling_SDE}
    \begin{cases}
        \dot{g}=\lte{g} \xi\\
        \d \xi=-\gamma\xi \d t+\ad_\xi^*\xi \d t
        -\lt{g}(\nabla U(g))\d t+\sqrt{2\gamma}\d W_t
    \end{cases}
\end{equation}
Here $\d W_t $ means $\sum_i e_i \d W_t^i$, with $W^i$ being i.i.d. Brownian motion and $\{e_i\}_{i=1}^m$ being a set of orthonormal basis of $\g$ under $\ip{\cdot}{\cdot}$. The Brownian motion in both cases is not on a manifold but simply in a finite-dimensional vector space with an inner product, thus is well-defined.

The term $\ad^*_\xi \xi$ is again due to the curved space, see Sec.\ref{sec_symplectic_structure}. It ensures the invariant distribution \eqref{eqn_Gibbs_TG} for any choice of inner product. See Thm. \ref{thm_invariant_distribution_Gibbs}. However, if we use the inner product in Lemma \ref{lemma_ad_self_adjoint}, this term vanishes and the sampling SDE reduces to Eq.\ref{eqn_sampling_SDE_ad_star_vanish}.

\vspace{-0.1cm}
\section{Convergence of sampling dynamics in continuous time}
\label{sec_convergence_SDE}
\vspace{-0.1cm}
We now prove the convergence of sampling SDE \eqref{eqn_sampling_SDE} under Wasserstein-2 distance. To do so, we first construct a coupling scheme by mixing synchronous coupling with reflection coupling. We also design a semi-distance. Using semi-martingale decomposition, we then prove the contractivity under this semi-metric, which induces convergence to the invariant distribution. Finally, this will be used to obtain convergence in $W_2$. The coupling scheme is inspired by \cite{eberle2019couplings}, but both our semi-distance and detailed coupling scheme are different, which are specially designed to handle the curved space and to better utilize the compactness of the Lie group.
\vspace{-0.1cm}
\subsection{Construction of coupling}
\label{sec_coupling}
Coupling is a powerful probabilistic technique for, e.g., studying the convergence of a diffusion process. An easy example is synchronous coupling, i.e. consider $\d x=b(x)\d t+\d W_t$ and $\d y=b(y)\d t+\d W_t$ with the same noise and $y$ initialized at the invariant distribution; when the drift term $b$ provides contractivity, one can show $x$ converges to $y$ and hence the invariant distribution. For Euclidean Langevin SDE, it works, for example, when $b=-\nabla U$ with a strongly convex potential $U$. However, when contractivity from the drift term is not enough or noise is more complicated, we need more advanced coupling techniques:

Specifically, we consider $(g_t, \xi_t)$ and $(\hat{g}_t, \hat{\xi}_t)$, both evolving in law as 
\eqref{eqn_sampling_SDE}, given by:
\begin{flalign}
    \begin{cases}
        \d g_t=\lte{g_t} \xi_t \d t\\
        \d\xi_t=-\gamma\xi_t \d t-\lt{g_t}(\nabla U(g_t))\d t+\sqrt{2\gamma}\rcfunc(Z_t,\mu_t) \d W_t^{\rcfunc} +\sqrt{2\gamma}\scfunc(Z_t,\mu_t) \d W_t^{\scfunc}
    \end{cases}
    \label{eq:SDEsCoupled} &&
\end{flalign}
\vspace{-0.25cm}
\begin{flalign*}
    \begin{cases}
        \d\hat{g}_t=\lte{\hat{g}_t} \hat{\xi}_t \d t\\
        \d\hat{\xi}_t=-\gamma\hat{\xi}_t \d t-\lt{\hat{g}_t}(\nabla U(\hat{g}_t))\d t+\sqrt{2\gamma}(I_d-2e_te_t^\top)\rcfunc(Z_t,\mu_t) \d W_t^{\rcfunc} +\sqrt{2\gamma}\scfunc(Z_t,\mu_t) \d W_t^{\scfunc}
    \end{cases}
\end{flalign*}
where $Z_t:=\log \hat{g}_t^{-1} g_t$, $\mu_t:=\xi_t-\hat{\xi}_t$ and $Q_t:=Z_t+\frac{1}{\gamma}\mu_t$. $e_t= Q_t/\norm{Q_t}$ if $Q_t\neq 0$, otherwise $e_t=0$.

In all cases, we use $\log$ to define the inverse of the exponential map corresponding to the minimum geodesic (any one will do when the minimum geodesic is not unique). Moreover, $\rcfunc, \scfunc:\mathbb{R}^{2m}\to [0,1]$ are Lipschitz continuous functions such that $\rcfunc^2+\scfunc^2\equiv 1$, and

\begin{equation}
\label{eqn_rc}
\rcfunc(z,\mu) =
\begin{cases}
0,\,\mbox{if }z+  \gamma^{-1}\mu=0\,\,\mbox{or}\,\, \norm{\mu}\ge \gamma R+\epsilon ,\\
1,\,\mbox{if }\norm{z+ \gamma^{-1}\mu}\ge\epsilon\,\,\mbox{and}\,\, \norm{\mu}\le \gamma R
\end{cases}
\end{equation}

The parameters $R\in(0, \infty)$ will be chosen later in Sec. \ref{sec_convergence_rate}. $\epsilon$ is a fixed positive constant, and will go to 0 eventually (See the proof for Thm. \ref{thm_contractivity_SDE_Wrho}).

The intuition of our design of coupling is illustrated in Fig. \ref{fig_coupling_region}. The nonlinearly transformed phase space is partitioned into four regions where different couplings are used. More precisely -

\begin{wrapfigure}{r}{0.25\textwidth}
\captionsetup{justification=raggedright}
\centering
    \includegraphics[width=0.25\textwidth]{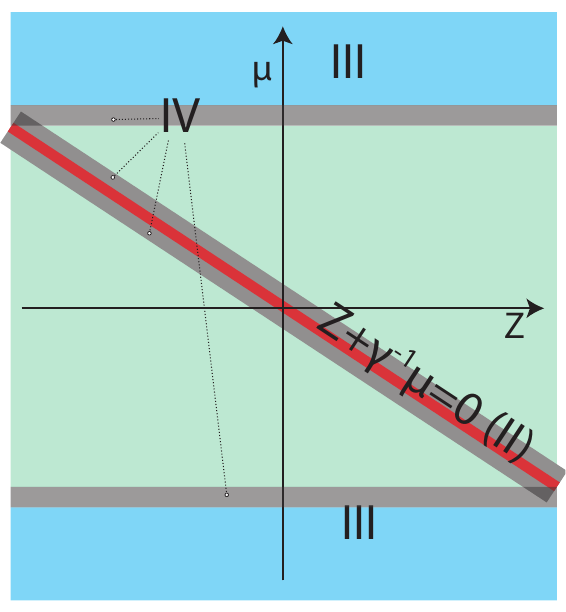}
    \caption{Partition of phase space using different couplings \vspace{-0.5cm}}
    \label{fig_coupling_region}
\end{wrapfigure}

Region (III) is where two momenta are very different. Their difference is so large that it is the main difference between $(g, \xi)$ and $(\hat{g}, \hat{\xi})$. In this case, friction $-\gamma\xi$ provides contractility and synchronous coupling is sufficient for bringing the two copies together.

Region (II) is the `good' part, which means the two points $(g, \xi)$ and $(\hat{g}, \hat{\xi})$ will eventually stop at the same point under friction if the potential does nothing. Here, the momenta of the two copies are pulling their position variables together, thus providing contractivity. Synchronous coupling is enough for this region too.

Region (I) is the tricky part. None of the two ways above works, and we use reflection coupling instead to obtain contractivity. An intuition is the following: when reflection coupling is used, the difference between trivialized momenta is no longer differentiable in time but a diffusion process. Combined with the concavity of $f$ in the semi-distance, which will be introduce in Sec. \ref{sec_semi_distance}, a negative Ito correction term shows up in the adapted finite-variation process of a semi-martingale decomposition of the semi-distance, which provides an extra negative drift and leads to contractivity if the parameters are chosen carefully. More specifically, the helping term we are referring to is $4\gamma^{-1} \rcfunc(Z,\mu)^2 f''(r)$ in $K$ (defined in Sec. \ref{sec_semi_martingale_decomposition}) in the semi-martingale decomposition of $e^{ct}\rho_t$ (Lemma \ref{lemma_martingale_decomposition}).

Region (IV) is the transitional region between synchronous coupling and reflection coupling, colored in gray in Fig. \ref{fig_coupling_region}. Its has width $\epsilon$ and will eventually vanish when $\epsilon\to 0$.

\vspace{-0.3cm}
\subsection{Design of semi-distance}
\label{sec_semi_distance}
Besides designing the coupling, we also need a quantification of how far two points are on $\G\times \g$. One of our innovations is, we do not necessarily need a distance but only a semi-distance, i.e., no triangle inequality. We will construct the semi-distance carefully so that the coupling leads to contractivity. More precisely, let the semi-distance $\rho$ be given by
\begin{align}
\label{eqn_rho}
    \rho((g,\xi),(\hat{g} ,\hat{\xi})):= f(r((g,\xi),(\hat{g} ,\hat{\xi})))G(\xi, \hat{\xi}), \quad \forall g, \hat{g}\in \G, \forall \xi, \hat{\xi} \in \g
\end{align}
where $f:[0, \infty)\to [0, \infty)$ is a continuous, non-decreasing, concave function satisfing $f(0)=0$, $f_+'(0)=1$, $f_-'(R)>0$ \footnote{Here $f_-$ and $f_+$ denote the left and right derivative of $f$, respectively.} and $f$ is constant on $[R, \infty)$. $r$ and $G$ are defined as
\begin{align}
r((g,\xi),(\hat{g} ,\hat{\xi}))&:=\alpha d(g, \hat{g})+\norm{\log \hat{g}^{-1}g+\gamma^{-1} (\xi-\hat{\xi})} \label{eqn_r}\\
G(\xi, \hat{\xi})&:=1+\beta\norm{\hat{\xi}-\xi}^2 \label{eqn_G}
\end{align}
An explicit expression of $f$ will be chosen later in Sec. \ref{sec_choose_parameter_f}. The parameters $\alpha, \beta\in(0, \infty)$ will be given in Sec. \ref{sec_choose_parameter_theta_beta_R}, whose values are chosen carefully together with other parameters ($f$ and $R$) to achieve contractivity.

As shown, the semi-distance $\rho$ has a complicated form, and the triangle inequality is sacrificed, which may lead to difficulties when analyzing numerical error from discretization later. But before diving into more details, let's provide some motivation for this complicated design.  and how this design of semi-distance is related to our coupling.

$\rho$ is the product of two parts, $f(r)$ and $G$. The function $G$ is one plus the square of Euclidean distance between $\xi$ and $\hat{\xi}$. It is designed for handling region (III), where friction ensures the decrease of $G$. Unlike \cite{eberle2019couplings}, our $G$ does not depend on position ($g$ or $\hat{g}$). Besides the intuition we mentioned about friction, other reasons are: 1) In \cite{eberle2019couplings}, $G$ also provide contractivity when the positions $g$ and $\hat{g}$ are far from each other utilizing strong dispativity under synchronous coupling. However, as discussed in Sec. \ref{sec_compare_conditions}, in our case, we can not have convexity assumptions on the potential and including $g$ and $\hat{g}$ in $G$ does not help; 2) compactness ensures the distance between $g$ and $\hat{g}$ is bounded, and in this case having $G$ that only depends on the difference of momentum is enough to ensure $\rho$ is stronger than $d^2$ (Lemma \ref{lemma_equivalence_distance}).

The other part of $\rho$ is $f(r)$. It is by design concave because, like mentioned earlier, Ito's correction will give an additional term based on $f''$, and concavity of $f$  will provide a negative sign and thus contractivity in region (I). Moreover, we make it first increasing and then constant starting from $r=R$ on. This way, $\min \{r\in\mathbb{R}: f(r)=\sup f\} = R$. Meanwhile, the bound for the synchronous coupling (III) is defined by $\norm{\xi-\hat{\xi}}\ge \gamma R$. It is not a coincidence that they both scale with $R$; instead, it is because the coupling and semi-metric are carefully designed together: As mentioned earlier in Sec. \ref{sec_coupling}, in region (III), the friction term is large enough and synchronous coupling suffices, i.e., when $\mu\ge \gamma R$, we have $r\ge R$. Therefore, the concavity of $f$ in the interval of $r<R$ is sufficient for creating contractivity, and thus we simply set $f$ to be constant on $[R, \infty)$.

Intuitively, $r$ in Eq. \eqref{eqn_r} measures the `distance' between points on $\G\times\g$. The first parts $d(g, \hat{g})$ is directly the distance on the Lie group. However, we do not measure the difference between momenta directly, but by $\norm{\log \hat{g}^{-1}g+\gamma^{-1}(\xi-\hat{\xi})}$, i.e. a twisted version with position distance also leaked in. The reason is similar to why we use region (I): we are measuring how far they will travel before eventually stopping due to friction without potential; this is a manifold generalization of the existing Euclidean treatment \citep[e.g.,][]{dalalyan2020sampling,eberle2019couplings}.

As a result, the design of $\rho$ is a combination of functions that ensures contractivity in different regions. Later, we will carefully select parameters $\alpha$ and $\beta$ to perfectly balance them. We will also discuss how to remedy the loss of triangle inequality and provide a substitute formula in Lemma \ref{lemma_triangular_ineq_rho} when we consider the discretization error of our SDE.

For now, we first show $\rho$ can control the standard geodesic distance. The standard geodesic distance $d$ in the product space $\G\times \g$ is defined by
\begin{align}
\label{eqn_d_TG}
    d((g, \xi), (\hat{g}, \hat{\xi}))=\sqrt{d^2(g, \hat{g})+\norm{\hat{\xi}-\xi}^2}
\end{align}
where $d$ on the right-hand side is the distance on $\G$ given by the minimum geodesic length. Since both the distance on $\G$ and the distance on $\G\times \g$ are derived from the inner product on $\g$, we will use the same notation when there is no confusion. We have the following theorem showing $d^2$ on $\G\times \g$ is controlled by $\rho$.
\begin{lemma}[Control of $d$ by $\rho$]
\label{lemma_equivalence_distance}
Define $C_\rho$ as Eq. \eqref{eqn_C_rho}, we have     \begin{align*}
        C_\rho d^2((g, \xi), (\hat{g}, \hat{\xi}))\le \rho((g, \xi), (\hat{g}, \hat{\xi})), \qquad \qquad
        \forall (g, \xi), (\hat{g}, \hat{\xi}) \in \G\times \g
    \end{align*}
\end{lemma}

Here are more intuitions about $\rho$. For simplified notation, use $d$ to denote $d((g, \xi), (\hat{g}, \hat{\xi}))$. When $d$ is small, we have $r\sim d$ and $d^2\lesssim d$. Moreover, we have $f_{-}'$ lower bounded from 0, and $\rho\equiv f(r) \sim d$. However, when $d$ is large, we have $r$ is also large and $f(r)=f(R)$. In this case, $\rho\sim f(R) d^2$. To summarize, $\rho$ is similar to $d$ when $d$ is small but similar to $d^2$ when $d$ is large. This is why $d^2$ but not $d$ can be bounded by $\rho$ in Lemma \ref{lemma_equivalence_distance}.

\begin{remark}[Comparison with \cite{eberle2019couplings}]
    The coupling and semi-distance used by \cite{eberle2019couplings} inspired our choices, but their version is not suitable for us. The first reason is that their $G$ (Eq. 3.10 in \cite{eberle2019couplings}) is specially designed for the condition `convex outside a ball', which is not available for Lie groups (Sec. \ref{sec_compare_conditions}). Also, technical issues appear for their semi-distance: $\d\log$ (differential of logarithm) does not exist on a zero-measured set $N$ (Eq.\ref{eqn_N}) on Lie groups, and on a small neighbour of $N$, the operator norm of $\d\log$ can be unbounded, which leads to difficulties in calculation when using their sem-distance. However, our design of coupling and semi-distance utilizes the boundness of our Lie group. Consequently,  in all our proof, what we only need for $\d\log$ is the properties in Cor. \ref{cor_log_property} and the fact that $N$ is zero-measured is enough for our approach.
\end{remark}

\subsection{Contractivity of sampling dynamics under $W_\rho$ distance}
\label{sec_convergence_rate}
We define the Wasserstein semi-distance between distributions on $\G\times \g$ as $W_\rho$, i.e., $W_\rho(\nu_1, \nu_2):=\inf_{\pi\in\Pi(\nu_1, \nu_2)}\int \rho d\pi$ where $\Pi$ is the set of all distributions on $(\G\times \g)^2$ with marginal distributions $\nu_1$ and $\nu_2$. We hope to prove the contractivity of densities under $W_\rho$ distance. The proof uses the following idea:

For any pair of points $(g, \xi)$ and $(\hat{g}, \hat{\xi})$ that are coupled together in the way stated in Sec. \ref{sec_coupling}, we now construct a martingale decomposition bound of $e^{ct}\rho((g, \xi), (\hat{g}, \hat{\xi}))$, where $c$ is our target contraction rate that will be chosen later. More precisely, as will be shown by Lemma \ref{lemma_martingale_decomposition}, $e^{ct}\rho((g, \xi), (\hat{g}, \hat{\xi}))$ is decomposed as the sum of an adapted finite-variation process  and a continuous local martingale, where the former is upper bounded by $\int_0^t e^{cs} K((g_s, \xi_s), (\hat{g}_s, \hat{\xi}_s)) \d s$ with $K:(\G\times\g)^2\to \mathbb{R}$ defined later in Sec. \ref{sec_semi_martingale_decomposition}.
\begin{lemma}\label{lemma_martingale_decomposition}
    Let $c,\epsilon \in (0,\infty )$, and suppose that $f:[0,\infty )\to [0,\infty )$
    is continuous, non-decreasing, concave, and $C^2$ except for
    finitely many points. Then
    \begin{equation}
    \label{eqn_decomposition_rho}
    e^{ct}\rho_t\le\rho_0 + \int_0^te^{cs}K_s ds + M_t\quad \forall t\ge 0\quad a.e.,
    \end{equation}
    where $(M_t)$ is a continuous local martingale, and $K_s:=K((g_s, \xi_s), (\hat{g}_s, \hat{\xi}_s))$
\end{lemma}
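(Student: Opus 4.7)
The plan is to derive \eqref{eqn_decomposition_rho} by applying a suitable It\^o-type formula to $e^{ct}\rho_t = e^{ct} f(r_t) G(\xi_t, \hat{\xi}_t)$, collecting $\d t$ contributions into $\int_0^t e^{cs} K_s\, \d s$ and stochastic-integral contributions into the local martingale $M_t$. Since $\rho = f(r)\cdot G$, I would first obtain the semimartingale decomposition of $r_t$ and of $G_t$ separately, then combine them through the product and chain rules, and finally absorb the factor $e^{ct}$ via $\d(e^{ct}\rho_t) = ce^{ct}\rho_t\,\d t + e^{ct}\d\rho_t$.

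The coupling structure drives the computation. Subtracting the two $\xi$-SDEs, the synchronous noise $\sqrt{2\gamma}\,\scfunc\,\d W^{\scfunc}$ cancels identically while the reflected noise doubles along the direction $e_t = Q_t/\norm{Q_t}$, so the diffusion of $\mu_t := \xi_t - \hat{\xi}_t$ is $2\sqrt{2\gamma}\,\rcfunc(Z_t, \mu_t)\, e_t e_t^\top \, \d W_t^{\rcfunc}$, yielding the matrix-valued quadratic variation $\d\langle \mu\rangle_t = 8\gamma\,\rcfunc(Z_t,\mu_t)^2\, e_t e_t^\top\, \d t$. Because the $g$-equations carry no diffusion, $Z_t = \log \hat{g}_t^{-1}g_t$ and $d(g_t, \hat{g}_t)$ have purely finite-variation evolutions, so all randomness in $r_t$ and $G_t$ enters through $\mu_t$. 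It\^o's formula on $G_t = 1+\beta\norm{\mu_t}^2$ is immediate. For $\norm{Q_t} = \sqrt{\ip{Q_t}{Q_t}}$ inside $r_t$ with $Q_t = Z_t + \gamma^{-1}\mu_t$, It\^o's formula produces both a first-order drift and a second-order correction; composed via the chain rule with $f''(r_t)\le 0$, this correction yields the helpful term proportional to $4\gamma^{-1}\rcfunc(Z_t,\mu_t)^2 f''(r_t)$ that drives contractivity in region (I). Assembling via the product rule
\begin{align*}
    \d(e^{ct}\rho_t) = e^{ct}\Bigl(c\rho_t\,\d t + G_t\,\d f(r_t) + f(r_t)\,\d G_t + \d\langle f(r_\cdot), G\rangle_t\Bigr)
\end{align*}
and collecting the $\d s$-coefficients into $K_s$ and the stochastic integrals into $M_t$ gives \eqref{eqn_decomposition_rho}.

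The principal obstacle is regularity, from three sources. (i) $f$ is only $C^2$ off finitely many points, so the standard It\^o formula does not literally apply; I would invoke an It\^o--Tanaka-type formula exploiting concavity of $f$, or equivalently mollify $f$ by $C^2$ concave approximations and pass to the limit, the inequality sign in the conclusion absorbing any leftover curvature measure via the concave subdifferential inequality $f(b)-f(a)\le f'_-(a)(b-a)$. (ii) The Lie-group distance $d(g,\hat{g})$ and the logarithm $\log \hat{g}^{-1}g$ are non-smooth on the cut locus; a fixed measurable choice of minimising geodesic (agreed in Sec.~\ref{sec_coupling}) makes them well-defined, and standard Kendall-type arguments show that a diffusion on a compact Riemannian manifold spends Lebesgue-null time on the cut locus, so an almost-everywhere calculation suffices. (iii) $\norm{Q_t}$ is not differentiable at $Q_t=0$; but the coupling is explicitly designed so that $\rcfunc(Z_t,\mu_t)=0$ whenever $Q_t=0$, killing the diffusion on the bad set, and the residual finite-variation contribution there can be absorbed into $K_s$ via a one-sided bound. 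The ``$\le$'' rather than ``$=$'' in the statement is precisely the slack that accommodates these subdifferential and null-set substitutions.
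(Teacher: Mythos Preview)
Your overall architecture---It\^o on $G_t$, It\^o--Tanaka on $f(r_t)$, product rule, then multiply by $e^{ct}$---is exactly the paper's route, and your identification of the three regularity obstacles is on point. Two places deserve correction.

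First, your handling of the cut locus via ``Kendall-type arguments'' does not apply here: the $g$-equations carry no direct noise, so $(g_t,\hat g_t)$ is not an elliptic diffusion on $\G\times\G$ and there is no a priori reason its path avoids $N$ for Lebesgue-a.e.\ time. The paper does not try to avoid $N$ pathwise at all. Instead it proves the lemma with the indicator $\mathbbm{1}_{\{\hat g_t^{-1}g_t\in N\}}\delta_t$ left inside $K$: off $N$ one differentiates $Z_t$ via $\d\log$, while on $N$ one falls back to the crude triangle-inequality bound $\frac{\d}{\d t}\norm{Z_t}\le\norm{\mu_t}$ and $\d\norm{Q_t}\le 2\norm{\mu_t}\,\d t+\cdots$; the difference between the two bounds is precisely $\delta_t$. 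The indicator is then killed \emph{in expectation} later (proof of Thm.~\ref{thm_contractivity_SDE_Wrho}) using absolute continuity of the law at each fixed time (Cor.~\ref{cor_absolute_continuous}) together with the fact that $N$ has Haar measure zero (Lemma~\ref{lemma_d_log}). So to prove the lemma as stated you must carry the indicator, not argue it away.

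Second, you gloss over the drift computation for $\norm{Z_t}$ and $\norm{Q_t}$, which is where the Lie-group geometry actually bites. Since $\frac{\d}{\d t}Z_t=\d\log_{\hat g_t^{-1}g_t}(\mu_t)$ and $\d\log$ is not the identity (its operator norm exceeds $1$ in general), a naive estimate would produce uncontrolled extra terms. The paper uses the identities of Cor.~\ref{cor_log_property}, which hold under the bi-invariant metric of Lemma~\ref{lemma_ad_self_adjoint}: $\ip{\d\log_g(T_eL_g\xi)}{\log g}=\ip{\log g}{\xi}$ gives $\frac{\d}{\d t}\norm{Z_t}=\frac{1}{\norm{Z_t}}\ip{Z_t}{\mu_t}$ with no $\d\log$ residue, and $\ip{\d\log_g(T_eL_g\xi)}{\xi}\le\norm{\xi}^2$ gives $\ip{\frac{\d}{\d t}Z_t-\mu_t}{Q_t}\le 0$, which is what makes the drift of $\norm{Q_t}$ come out as it does in $K$. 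Also note that the It\^o correction for $\norm{Q_t}$ itself is zero (the noise is purely radial along $e_t=Q_t/\norm{Q_t}$ and $\partial^2_{q/\norm{q}}\norm{q}=0$); the $4\gamma^{-1}\rcfunc^2 f''(r_t)$ term arises only at the next layer, when It\^o--Tanaka is applied to $f$ composed with $r_t$.
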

By taking expectation, we only need $K((g_s, \xi_s), (\hat{g}_s, \hat{\xi}_s))\le 0, a.e., \forall s$ when $\epsilon\to 0$, where $(g_s, \xi_s)$ and $(\hat{g}_s, \hat{\xi}_s)$ are coupled as in Sec. \ref{sec_coupling}. This will infer that $\mathbb{E}e^{ct}\rho((g_t, \xi_t), (\hat{g}_t, \hat{\xi}_t))$ is non-increasing and further gives us the convergence rate under $W_\rho$ distance. We summarize the conditions needed for $K((g_s, \xi_s), f(\hat{g}_s, \hat{\xi}_s))\le 0, a.e.$ in Sec. \ref{sec_conditions_for_contractivity} and later choose all the parameters $f$, $\alpha$, $\beta$, $R$ and $c$ in Sec. \ref{sec_choose_parameters} such that these conditions are met. The choice of $\gamma$ is also given to establish an explicit order of convergence rate.
This gives our contractivity result for sampling SDE Eq. \eqref{eqn_sampling_SDE} under semi-distance $\rho$:
\begin{theorem}
\label{thm_contractivity_SDE_Wrho}
For any probability distributions $\nu_0$ and $\hat{\nu}_0$ on $\G\times \g$ that is absolute continuous w.r.t. $\d g \d \xi$, if we evolve them by the sampling SDE \eqref{eqn_sampling_SDE}, then, for some $\alpha$, $\beta$, $R$, $f$, and $c>0$ (specified in Thm. \ref{lemma_parameters}), we have
\begin{equation*}
W_\rho (\nu_t, \hat{\nu}_t)\le e^{-ct} W_\rho (\nu_0, \hat{\nu}_0)
\end{equation*}
\end{theorem}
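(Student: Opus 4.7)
\medskip

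\noindent\textbf{Proof proposal for Theorem \ref{thm_contractivity_SDE_Wrho}.}

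The plan is to prove contractivity pathwise for the coupling of Sec.~\ref{sec_coupling}, and then pass from the a.s.\ inequality to a statement about $W_\rho$ via an infimum over couplings. Concretely, given any probability distributions $\nu_0$ and $\hat\nu_0$ on $\G\times\g$, I would start the two SDEs from a joint initial law $\pi_0$ achieving (or nearly achieving) $W_\rho(\nu_0,\hat\nu_0)$, and evolve them jointly under the synchronous/reflection mixture described in Sec.~\ref{sec_coupling}. The marginals at time $t$ are $\nu_t$ and $\hat\nu_t$, so any bound on $\mathbb{E}\,\rho_t$ gives an upper bound on $W_\rho(\nu_t,\hat\nu_t)$.

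The core of the argument is the semimartingale decomposition of Lemma~\ref{lemma_martingale_decomposition}, which writes
\begin{equation*}
e^{ct}\rho_t \le \rho_0 + \int_0^t e^{cs}K_s\,ds + M_t,
\end{equation*}
with $M_t$ a continuous local martingale and $K_s=K((g_s,\xi_s),(\hat g_s,\hat\xi_s))$. The main task is then to show that, with the parameters $\alpha,\beta,R,c$ and the concave envelope $f$ chosen as in Thm.~\ref{thm_parameters}, one has $K_s\le 0$ almost surely for every $s$. I would carry this out by splitting the phase space into the three regions (I), (II), (III) of Fig.~\ref{fig_coupling_region} and bounding $K$ separately in each:
\begin{itemize}
    \item In region (III), where $\norm{\mu}\ge\gamma R$, the coupling is synchronous, so the stochastic contribution to $K$ from reflection is absent. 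Here one uses the friction term $-\gamma\xi\,dt$: the $G(\xi,\hat\xi)=1+\beta\norm{\hat\xi-\xi}^2$ factor produces a $-2\beta\gamma\norm\mu^2$ contribution, and on this region $f(r)=f(R)$ is constant so $f(r)G$ inherits the dissipation, yielding $K\le 0$ for $c$ small enough relative to $\gamma$.
    \item In region (II), where the twisted distance $\norm{Z+\gamma^{-1}\mu}$ is small but $\norm\mu$ is moderate, synchronous coupling is again used but now contractivity comes from the fact that $\mu$ is pulling $Z$ toward zero under the (geodesically linearized) free dynamics. The term $\alpha d(g,\hat g)+\norm{Z+\gamma^{-1}\mu}$ in $r$ is built precisely so that its drift under the coupling is $\le -\alpha(\cdots)$ modulo an $L$-smoothness error bounded by $L\,d(g,\hat g)$, and $\alpha$ is tuned to dominate the latter.
    \item In region (I), where $\norm{Z+\gamma^{-1}\mu}\ge\epsilon$ and $\norm\mu\le\gamma R$, reflection coupling is active and contributes an It\^o correction $4\gamma^{-1}\rcfunc^2 f''(r)G$ to $K$. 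Because $f$ is strictly concave on $[0,R]$, this term is strictly negative, and a sufficiently concave choice of $f$ (the one fixed in Sec.~\ref{sec_choose_parameter_f}) will dominate the positive drift contributions arising from $L$-smoothness of $\nabla U$ and from non-positive curvature via the comparison of $d(g,\hat g)$ with $\norm{\log\hat g^{-1}g}$.
\end{itemize}

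Once $K_s\le 0$ is established everywhere, taking expectations in the decomposition, using that $M_t$ is a true martingale after a standard localization argument (which is legitimate because $\G$ is compact, so $g,\hat g$ are bounded, and the second moment of $\mu$ remains controlled by $G$), gives $\mathbb{E}\,e^{ct}\rho_t\le\mathbb{E}\,\rho_0$. Optimizing over the initial coupling yields $W_\rho(\nu_t,\hat\nu_t)\le e^{-ct}W_\rho(\nu_0,\hat\nu_0)$, as claimed. Letting the transit parameter $\epsilon\downarrow 0$ removes the auxiliary mollification from $\rcfunc,\scfunc$ by a standard stability argument for Wasserstein-type distances, since $\rho$ depends continuously on its arguments and the coupling converges in law.

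The main obstacle I expect is region (I), where all three features of the problem interact: the reflection-induced $f''$ term must beat (a) the $L$-smoothness of $\nabla U$ (which creates a destabilizing drift proportional to $d(g,\hat g)$), and (b) the curvature correction arising because $Z_t=\log\hat g_t^{-1}g_t$ is the \emph{Lie-logarithmic} difference rather than a Euclidean one, so its time derivative picks up $\ad$-type terms that are absent in the Euclidean analysis of \cite{eberle2019couplings}. Making sure these curvature terms are dominated by the concavity gain from $f$, while simultaneously choosing $\alpha,\beta$ and $\gamma$ so that the bounds from regions (II) and (III) remain compatible, is the delicate calculation that determines the explicit rate $c$ in Thm.~\ref{thm_parameters}.
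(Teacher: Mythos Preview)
Your proposal is essentially the paper's own approach: couple via Sec.~\ref{sec_coupling}, invoke the semimartingale decomposition (Lemma~\ref{lemma_martingale_decomposition}), verify $K\le 0$ region by region (this is Thm.~\ref{thm_conditions} with the parameter choices of Thm.~\ref{thm_parameters}), take expectations, let $\epsilon\to 0$, and optimize over initial couplings.

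Two small clarifications. First, you do not use the absolute-continuity hypothesis anywhere, but in the paper it is essential: it guarantees (via Cor.~\ref{cor_absolute_continuous} and Lemma~\ref{lemma_d_log}) that the set $N$ where $\d\log$ fails has measure zero under the coupled law at every time, so the indicator term $\mathbbm{1}_{\{\hat g^{-1}g\in N\}}\delta_t$ appearing in $K$ vanishes in expectation and can be discarded. Second, your anticipated ``main obstacle'' in region~(I)---curvature corrections from the $\ad$-type terms in $\d Z_t$---does not in fact materialize: because the metric is bi-invariant, Cor.~\ref{cor_log_property} gives $\langle \d\log_{\hat g^{-1}g}\mu - \mu,\, Q\rangle \le 0$, so these terms contribute with a favorable sign in the decomposition of $\norm{Q_t}$ and require no additional concavity from $f$ to control.
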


As we mentioned, $W_\rho$ is only a semi-distance and lacks triangle inequality since $\rho$ is only a semi-distance. However, what we only need for now is it controls $W_2$ distance:

\subsection{Error bound for sampling SDE under $W_2$ distance}

Upon choosing $\hat{\nu}_0$ in Thm.\ref{thm_contractivity_SDE_Wrho} as the invariant distribution, obviously $\hat{\nu}_t = \hat{\nu}_0$ and Thm.\ref{thm_contractivity_SDE_Wrho} thus quantifies the convergence speed of the sampling dynamics in $W_\rho$. Since $W_\rho$ is what we invented and not a distance, we control $d^2$ by $\rho$, which infers $W_\rho$ controls $W_2^2$ (Cor. \ref{cor_equivalence_W2}),  so that we can have the following theorem for convergence in a more standard distance:
\begin{theorem}[Error of sampling SDE under $W_2$]
\label{thm_SDE_error_W2}
Suppose the initial condition $(g_0, \xi_0)\sim \nu_0$, where we assume $\nu_0$ is absolute continuous w.r.t. $\d g\d \xi$. Denote by $\nu_*$ the Gibbs distribution Eq. \eqref{eqn_Gibbs_TG} and by $\nu_t$ the distribution evolved by SDE Eq. \eqref{eqn_sampling_SDE}. Then we have the $W_2$ distance between $\nu_t$ and $\nu_*$ is bounded by
\begin{align*}
    W_2(\nu_t, \nu_*)\le e^{-\frac{c}{2}t}\sqrt{C_\rho W_\rho(\nu_0, \nu_*)}
\end{align*}
\end{theorem}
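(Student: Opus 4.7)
The plan is to stitch together two ingredients that are already in place: the $W_\rho$-contractivity of Thm.\ \ref{thm_contractivity_SDE_Wrho} and the pointwise comparison of $d^2$ with $\rho$ from Lemma \ref{lemma_equivalence_distance}. The only subtlety is handling the fact that $W_\rho$ is built from a semi-distance, so I will take care to keep the argument at the level of expectations (i.e., integrals of $\rho$ or $d^2$ against couplings) rather than invoking any triangle inequality.

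First, I would initialize the second copy of the coupled SDE at the invariant distribution, i.e.\ apply Thm.\ \ref{thm_contractivity_SDE_Wrho} with $\hat\nu_0=\nu_*$. Since $\nu_*$ is the invariant distribution of the sampling SDE \eqref{eqn_sampling_SDE} (as established in Thm.\ \ref{thm_invariant_distribution_Gibbs}), we have $\hat\nu_t=\nu_*$ for every $t\ge 0$. Plugging this into Thm.\ \ref{thm_contractivity_SDE_Wrho} yields
\begin{equation*}
    W_\rho(\nu_t, \nu_*)\;\le\; e^{-ct}\, W_\rho(\nu_0, \nu_*).
\end{equation*}
Note that the hypothesis in Thm.\ \ref{thm_contractivity_SDE_Wrho} requiring absolute continuity w.r.t.\ $\d\!g\d\!\xi$ is satisfied for both $\nu_0$ (by assumption) and $\nu_*$ (by the explicit density in \eqref{eqn_Gibbs_TG}).

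Next I would transfer this $W_\rho$-bound into a $W_2$-bound via Lemma \ref{lemma_equivalence_distance}. That lemma gives the pointwise inequality $C_\rho\, d^2((g,\xi),(\hat g,\hat\xi))\le \rho((g,\xi),(\hat g,\hat\xi))$ for all points in $\G\times\g$. Integrating this pointwise inequality against any coupling $\pi\in\Pi(\nu_t,\nu_*)$, in particular against an optimal (or near-optimal) coupling for $W_\rho$, gives
\begin{equation*}
    C_\rho\, W_2^2(\nu_t,\nu_*)\;\le\; C_\rho\!\int d^2\,\d\pi \;\le\; \int \rho\,\d\pi \;=\; W_\rho(\nu_t,\nu_*),
\end{equation*}
where the first inequality follows because $W_2^2$ is the infimum over all couplings of $\int d^2 \d\pi$. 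This is the formal statement of Cor.\ \ref{cor_equivalence_W2}. Chaining the two displayed inequalities and taking square roots gives
\begin{equation*}
    W_2(\nu_t, \nu_*)\;\le\; e^{-ct/2}\sqrt{C_\rho^{-1}\, W_\rho(\nu_0, \nu_*)},
\end{equation*}
which matches the claimed bound (up to the obvious sign convention in the exponent).

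Essentially no real obstacle remains once Thm.\ \ref{thm_contractivity_SDE_Wrho} and Lemma \ref{lemma_equivalence_distance} are in hand; the only points demanding care are (i) confirming invariance of $\nu_*$ under the continuous dynamics so that the second marginal stays fixed, and (ii) avoiding any appeal to the triangle inequality for $W_\rho$, which is why I pass through the pointwise bound on $\rho$ versus $d^2$ rather than through any triangle chain in $W_\rho$ itself.
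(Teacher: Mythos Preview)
Your proposal is correct and matches the paper's own proof, which simply records that the theorem is a direct corollary of Thm.~\ref{thm_contractivity_SDE_Wrho} (applied with $\hat\nu_0=\nu_*$) together with Cor.~\ref{cor_equivalence_W2}. Your observation about the sign in the exponent is well taken: the stated bound should read $e^{-ct/2}$, and similarly the constant should be $C_\rho^{-1}$ under the square root (since Lemma~\ref{lemma_equivalence_distance} gives $C_\rho d^2\le\rho$, hence $W_2^2\le C_\rho^{-1}W_\rho$); these are evidently typos in the paper.
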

The reason why we need absolute continuity of the initial condition is because this gives us absolute continuity at any time $t$, which further enables us to ignore a bad set $N$ where $\log$ is not differentiable. However, this condition will not lead to infeasibility of our discrete algorithm, which will be discussed later in Rmk. \ref{rmk_absolute_continuity_initialization}.

The contraction rate $c$ is not explicitly expressed in the above theorem only because it is lengthy. A detailed expression is given by Eq. \eqref{eqn_c_upper_bound} in Lemma \ref{lemma_parameters} and Eq. \eqref{eqn_c_order} gives an estimation of the order of $c$. How these results help choose $\gamma$ is also discussed in Sec. \ref{sec_discussion_c}.

\section{Convergence of sampling algorithm in discrete time}
\label{sec_convergence_splitting}
This section will first construct a sampler based on a time discretization of our sampling SDE. Thanks to an operator splitting technique, this discretization will render the iterations exactly satisfying the geometry of the curved space, which is a pleasant property referred to as structure-preservation (Thm. \ref{thm_structure_preserving}). Then we will establish its error bound in $W_2$ by: 1) quantifying local integration error in $d$ (Thm. \ref{thm_local_error_W2}); 2) developing a modified triangle inequality for our semi-distance $\rho$ (Lemma \ref{lemma_triangular_ineq_rho}) and estimating sampling error propagation for the discrete sampler under $\mathbb{E}\rho$ (Thm. \ref{thm_error_propagation}); 3) quantifying how local error accumulates to establish a global nonasymptotic error estimate under $\mathbb{E}\rho$ (Cor. \ref{cor_error_Erho}), and then turning it into a nonasymptotic sampling error bound under $W_\rho$ (Thm. \ref{thm_global_error_Wrho}); 4) bounding sampling error in $W_2$ (Thm. \ref{thm_global_error_W2}) by the fact that $W_2^2$ can be controlled by $W_\rho$ (Cor. \ref{cor_equivalence_W2}).

\subsection{Sampler based on splitting discretization}
\label{sec_splitting}
We consider sampling SDE \eqref{eqn_sampling_SDE} with vanishing $\ad^*_\xi \xi$ (due to Lemma \ref{lemma_ad_self_adjoint}). To obtain a time discretization that respects the geometry, we write \eqref{eqn_sampling_SDE} as the sum of the following two SDEs, each of which can be solved explicitly, and alternatively evolve them to approximate the solution of \eqref{eqn_sampling_SDE}, whose closed-form solution does not exist.
\vspace{-0.5cm}
\begin{center}
\begin{minipage}[t]{.65\linewidth}
\begin{equation}
\label{eqn_splitting_SDE_xi}
    \begin{cases}
        \dot{g}=0\\
        \d\xi=-\gamma\xi \d t-\lt{g}(\nabla U(g))\d t+\sqrt{2\gamma}\d W_t
    \end{cases}
\end{equation}
\end{minipage}
\hfill
\begin{minipage}[t]{.3\linewidth}
\begin{equation}
\label{eqn_splitting_SDE_g}
    \begin{cases}
        \dot{g}=\lte{g} \xi\\
        \d\xi=0
    \end{cases}
\end{equation}
\end{minipage}
\end{center}
To implement our algorithm, we embed the Lie group in an ambient Euclidean space, and the algorithm automatically keeps the iterates on the Lie group. Most algorithms in curved spaces similarly rely on this embedding, but they need extra work to correct the deviation from manifold after each iteration, e.g., \cite{cisse2017parseval}. In contrast, thanks to our specific splitting discretization, no matter how large the step size $h$ is, our algorithm ensures $g$ stays on the Lie group, and no artificial step that pulls the point back to the curved space is needed. Specifically, by first evolving Eq. \eqref{eqn_splitting_SDE_xi} for time $h$ and then Eq. \eqref{eqn_splitting_SDE_g} for time $h$, we obtain the following one step update:
\begin{equation}
\label{eqn_discrete_splitting}
    \begin{cases}
        \tilde{g}_h=g_0\exp(h\tilde{\xi}_h)\\
        \tilde{\xi}_h=\exp(-\gamma h)\xi_0-\frac{1-\exp(-\gamma h)}{\gamma} \lt{g}\nabla U(g_0)+\sqrt{2\gamma}\int_0^h \exp(-\gamma (h-s)) \d W_s
    \end{cases}
\end{equation}
Iterating this update gives Algorithm \ref{algo_sampling_Lie_group}.

Its general structure preservation is summarized below; an example is given in Rmk.\ref{rmk_example_structure_preserving}.

\begin{theorem}
\label{thm_structure_preserving}
    The splitting discretization Eq. \eqref{eqn_discrete_splitting} is structure-preserving, i.e., for any step size $h$ and any initial point $(g_0, \xi_0)$, the iteration $(g_k, \xi_k), \forall k\ge 0$ has the property that $g_k$ stays exactly on the Lie group and $\xi_k$ stays exactly on the Lie algebra.
\end{theorem}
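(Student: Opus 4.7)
The plan is to proceed by induction on $k$, using the fact that each of the two split sub-SDEs \eqref{eqn_splitting_SDE_xi} and \eqref{eqn_splitting_SDE_g} can be solved in closed form and that each exact sub-flow manifestly preserves the pair $(\G,\g)$.

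First I would set up the induction. By hypothesis (or by the initialization in Alg.~\ref{algo_sampling_Lie_group}, where $g_0\in\G$ and $\xi_0=0\in\g$) we have $g_0\in\G$ and $\xi_0\in\g$. Assume $g_k\in\G$ and $\xi_k\in\g$. It then suffices to verify that one composite step \eqref{eqn_discrete_splitting} produces $g_{k+1}\in\G$ and $\xi_{k+1}\in\g$.

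Next I would handle the $\xi$ update, which corresponds to exactly integrating \eqref{eqn_splitting_SDE_xi} over $[0,h]$ with $g$ frozen at $g_k$. Since $\g=T_e\G$ is a finite-dimensional real vector space, it is closed under scalar multiplication and addition, so I only need to check that each summand of $\xi_{k+1}$ lies in $\g$. The term $e^{-\gamma h}\xi_k$ is a scalar multiple of $\xi_k\in\g$; the term $\gamma^{-1}(1-e^{-\gamma h})\,T_{g_k}L_{g_k^{-1}}\nabla U(g_k)$ lies in $\g$ by definition of the left-trivialized gradient (the map $T_g L_{g^{-1}}:T_g\G\to T_e\G=\g$); and the stochastic integral $\sqrt{2\gamma}\int_0^h e^{-\gamma(h-s)}\,\d W_s$ is a deterministic-kernel integral against the $\g$-valued Brownian motion $W_t=\sum_i e_i W_t^i$ with $\{e_i\}\subset\g$, hence itself $\g$-valued. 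A linear combination of elements of $\g$ therefore gives $\xi_{k+1}\in\g$; no projection back to the manifold is required because $\g$ is a flat vector space.

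Finally I would treat the $g$ update, which is the exact flow of \eqref{eqn_splitting_SDE_g} over $[0,h]$ with the (already updated) $\xi_{k+1}\in\g$ held constant. The ODE $\dot g=T_eL_g\,\xi_{k+1}$ is generated by a left-invariant vector field on $\G$ with constant left-trivialization $\xi_{k+1}$, so by the definition of the Lie-group exponential map its exact flow starting from $g_k$ is $g(t)=g_k\exp(t\,\xi_{k+1})$, and in particular $g_{k+1}=g_k\exp(h\,\xi_{k+1})$. Since the exponential map sends $\g$ to $\G$ and $\G$ is closed under group multiplication, $g_{k+1}\in\G$. The induction then closes.

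The proof is essentially a verification rather than a fight, so there is no single step I expect to dominate as an obstacle; the only place one might slip is in confusing $\tilde{\xi}_h$ as an abstract object versus as an element of the ambient matrix algebra when $\G$ is realized as a matrix group. The cleanest way to avoid this is the coordinate-free formulation above, where $\exp:\g\to\G$ is the intrinsic group exponential and left multiplication by $g_k$ is a group operation, both of which preserve $\G$ by definition; the result is then independent of whether $\G$ is represented as matrices or abstractly.
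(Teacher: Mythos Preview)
Your proposal is correct and follows essentially the same approach as the paper: induction on $k$, with the $\xi$-update preserved because $\g$ is a vector space and the $g$-update preserved because $\exp:\g\to\G$ and $\G$ is closed under multiplication. Your version is somewhat more detailed (checking each summand of $\xi_{k+1}$ lies in $\g$ individually), but the argument is the same.
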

\vspace{-0.1cm}

We remark that the commonly used exponential integrator (see Eq.\ref{eqn_exp_integrator_Euclidean}) doesn't work here due to the nonlinear geometry. For example, the matrix-group-embedded $g$ dynamics is $\dot{g}=g\xi$ where both $g$ and $\xi$ are matrices; because $\xi$ is time-dependent, $g$ admits no analytical solution. Existing tools for analyzing exponential-integrator-based samplers unfortunately have technical difficulties to be generalized here too (see Sec. \ref{sec_more_about_splitting_discretization}).

\vspace{-0.2cm}
\subsection{Quantification of discretization error}
\vspace{-0.1cm}
\underline{Notation}: \emph{starting from initial conditions $(g_0, \xi_0)$ and $(\hat{g}_0, \hat{\xi}_0)$, we use $(g_t, \xi_t)$ and $(\hat{g}_t, \hat{\xi}_t)$ to denote the exact solutions of two SDEs that are coupled together \eqref{eq:SDEsCoupled} correspondingly. $(\tilde{g}_h, \tilde{\xi}_h)$ is for one iteration by our sampler (i.e. splitting discretization \eqref{eqn_discrete_splitting}) from initial condition $(g_0, \xi_0)$ with fixed step size $h$. $(\tilde{g}_k, \tilde{\xi}_k)$ is the result of $k$ iterations.}

It is challenging to quantify discretization error directly in $\rho$, because $\rho$ has a complicated expression and lacks triangle inequality. We bypass the difficulty by using the natural distance $d$ on $\G\times \g$ instead. 
The following theorem will quantify the one-step mean square error of our numerical scheme. The proof uses a new technique, namely to view the numerical solution after just one-step, with step size $h$, as the exact solution of some shadowing SDE at time $h$ (Lemma \ref{lemma_SDE_splitting}), and then we only need to quantify the difference between the sampling SDE \eqref{eqn_sampling_SDE} and the shadow SDE \eqref{eqn_SDE_splitting}.
\vspace{-0.1cm}
\begin{theorem}[Local numerical error]
\label{thm_local_error_W2}
    Suppose the initial condition  $(g_0, \xi_0)\sim \nu_0$ is absolute continuous w.r.t. $\d g\d \xi$. $(g_t, \xi_t)$ is the solution of SDE \eqref{eqn_sampling_SDE} at time $h$ and $(\tilde{g}_h, \tilde{\xi}_h)$ is the solution of the numerical discretization Eq. \ref{eqn_discrete_splitting} with time step $h$. Then we have
    \begin{align*}
    \mathbb{E}d^2((g_h, \xi_h), (\tilde{g}_h, \tilde{\xi}_h))\le x(h)+y(h)
    \end{align*}
    where $x(t)$ and $y(t)$ are the solutions to an ODE given by Eq.\ref{eqn_xy_ode}.
\end{theorem}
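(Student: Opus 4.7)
The plan is to execute the shadow SDE strategy advertised by the authors. First, I would construct an auxiliary SDE on $[0,h]$, driven by the same Brownian motion and starting from the same $(g_0,\xi_0)$ as \eqref{eqn_sampling_SDE}, whose time-$h$ marginal coincides exactly with the one-step splitting output $(\tilde g_h,\tilde\xi_h)$ of \eqref{eqn_discrete_splitting}. Given the closed-form splitting formulas, the natural choice freezes the trivialized gradient in the $\xi$-equation at $g_0$, making it an OU-type linear SDE whose solution at $t=h$ is precisely $\tilde\xi_h$, and pairs it with a position equation chosen so that $g^s_h=g_0\exp(h\tilde\xi_h)$ (for example by flowing $g^s_t$ along the left-invariant vector field generated by the terminal velocity). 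This construction is the content of the referenced Lemma \ref{lemma_SDE_splitting}.

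With the shadow SDE \eqref{eqn_SDE_splitting} in hand, the problem reduces to comparing it to \eqref{eqn_sampling_SDE} under synchronous coupling (same $W_t$, same initial data). Let
\begin{align*}
x(t) := \mathbb{E}\, d^2(g_t, g^s_t), \qquad y(t) := \mathbb{E}\, \norm{\xi_t - \xi^s_t}^2,
\end{align*}
both of which vanish at $t=0$. The only discrepancies between the two SDEs are (i) the gradient mismatch $T_gL_{g^{-1}}\nabla U(g)-T_{g_0}L_{g_0^{-1}}\nabla U(g_0)$ in the $\xi$-equation, controlled by $L\,d(g_t,g_0)$ via Assumption \ref{assumption_L_smooth}, and (ii) the velocity mismatch in the $g$-equation. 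The diffusion coefficients coincide, so the stochastic integrals vanish in expectation. Applying It\^o (or Dynkin) to these squared errors then produces a $2\times 2$ linear system of differential inequalities for $(x,y)$, with coefficients depending only on $L$ and $\gamma$ and inhomogeneous forcing controlled by moments of $\norm{\xi_t}^2$ and $d^2(g_t,g_0)$; those moments are uniformly bounded on $[0,h]$ thanks to compactness of $\G$ and standard a-priori estimates on \eqref{eqn_sampling_SDE}. The resulting system is \eqref{eqn_xy_ode}, and its value at time $h$ yields the claimed bound.

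The main obstacle is that $d^2$ on $\G$ is not globally $C^2$: the cut locus prevents a naive It\^o formula. The assumption that $\nu_0$ is absolutely continuous with respect to $\d g\,\d\xi$ should propagate (under the hypoelliptic nature of \eqref{eqn_sampling_SDE}) to absolute continuity of $\Law(g_t,\xi_t)$ for every $t\in[0,h]$, so the cut locus is a null set that can be ignored in the expectation. On its complement, left-invariance of the metric lets me write $d(g_t,g^s_t)=\norm{\log((g^s_t)^{-1}g_t)}$ and apply It\^o to $\norm{\log(\cdot)}^2$, using the explicit differential of $\log$ on a Lie group together with Jacobi-field comparison to convert the curvature contributions into finite constants that can be absorbed into the coefficients of \eqref{eqn_xy_ode}. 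Once this regularity issue is dispatched the remaining computation is, in spirit, the same synchronous It\^o bookkeeping one would carry out in Euclidean space.
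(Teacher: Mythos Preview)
Your high-level architecture (shadow SDE + synchronous coupling + Gronwall on a $2\times 2$ system) matches the paper, and your treatment of the cut locus via absolute continuity is exactly what the paper does. But there is a real gap in how you describe the shadow SDE and its consequences.

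The shadow process in Lemma~\ref{lemma_SDE_splitting} is $\tilde g_t=g_0\exp(t\,\tilde\xi_t)$, which is \emph{adapted}. Your suggestion of ``flowing $g^s_t$ along the left-invariant vector field generated by the terminal velocity $\tilde\xi_h$'' is anticipating and cannot be used inside an It\^o calculation. If instead you adopt the paper's adapted choice, then differentiating $g_0\exp(t\,\tilde\xi_t)$ by It\^o pushes the noise of $\tilde\xi_t$ into the $\tilde g$-equation: you get a diffusion term $t\,p^{-1}(\ad_{t\tilde\xi_t})\sqrt{2\gamma}\,\d W_t$ and a second-order drift $\gamma t^2\sum_{ij}\d^2\exp(t\tilde\xi_t)(e_i,e_j)$. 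Hence your sentence ``the diffusion coefficients coincide, so the stochastic integrals vanish in expectation'' is false for the position component: the true SDE has no noise in $g$, the shadow SDE does.

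This matters for the stated bound. When you apply It\^o to $d^2(g_t,\tilde g_t)$, the extra diffusion in $\tilde g_t$ produces a genuine It\^o correction (bounded in the paper via Lemmas~\ref{lemma_d2exp_upper_bound}--\ref{lemma_dexp_upper_bound}), and the extra drift terms of size $O(t)$ and $O(t^2)$ are exactly what generate the $t^2$, $t^4$ forcing and the $\tfrac12\gamma C m^3 t^2$ coefficient in \eqref{eqn_xy_ode}. A computation that assumes coinciding diffusion would miss all of these and could not reproduce the ODE system as written. Also note that in the paper $x$ tracks $\mathbb{E}\norm{\xi_t-\tilde\xi_t}^2$ and $y$ tracks $\mathbb{E}\,d^2(g_t,\tilde g_t)$, the reverse of your labeling; and the coefficients of \eqref{eqn_xy_ode} depend not only on $L,\gamma$ but also on the dimension $m$, the curvature constant $C=\max_{\norm{X}=1}\norm{\ad_X}_{op}$, the diameter $D$, and the moment bound $C_2$.
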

If one wants a more explicit bound, the following Lemma gives a less tight but more succinct one: Basically the local strong error is $3/2=1.5$ order.

\begin{lemma}[Order of local error]
\label{lemma_order_W2_error}
    \begin{align*}
        \mathbb{E}d^2((g_h, \xi_h), (\tilde{g}_h, \tilde{\xi}_h))=\mathcal{O}(h^3), \quad h\to 0
    \end{align*}
    i.e., $\exists h_0, A_{h_0}, s.t. \forall h\le h_0, x(h)+y(h)\le A_{h_0} h^3$ for $x$ and $y$ defined by Eq. \ref{eqn_xy_ode}.
\end{lemma}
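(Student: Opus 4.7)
The plan is to deduce the claimed $\mathcal{O}(h^3)$ bound directly from Theorem \ref{thm_local_error_W2}: it suffices to show that the solution $(x,y)$ of the ODE system (\ref{eqn_xy_ode}) satisfies $x(h)+y(h)=\mathcal{O}(h^3)$ as $h\to 0$. The strategy is a Taylor expansion at $h=0$, using three facts about the set-up: the exact SDE and its splitting share the initial condition $(g_0,\xi_0)$; the splitting is consistent in the sense that both halves (\ref{eqn_splitting_SDE_xi})--(\ref{eqn_splitting_SDE_g}) are \emph{exact} integrators of their respective vector fields; and Assumptions \ref{assumption_general} and \ref{assumption_L_smooth} provide uniform bounds on the potential and geometric data.

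First I would observe that $x(0)=y(0)=0$ because $(g_h,\xi_h)$ and $(\tilde g_h,\tilde\xi_h)$ start from the same point. Next I would differentiate the ODE (\ref{eqn_xy_ode}) at $h=0$ and verify that $\dot x(0)=\dot y(0)=0$; intuitively, the drifts and diffusion coefficients of the sampling SDE (\ref{eqn_sampling_SDE}) and the shadowing SDE of Lemma \ref{lemma_SDE_splitting} agree at time $0$, so the integrands that feed into $\dot x$ and $\dot y$ vanish at $h=0$. Then I would check $\ddot x(0)=\ddot y(0)=0$: the second derivative of the squared $L^2$ discrepancy at $h=0$ is determined by the first-order difference between the two SDEs' coefficients, which also vanishes because the splitting reproduces the drift up to $\mathcal{O}(h)$ and the diffusion exactly. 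By Taylor's theorem with integral remainder,
\begin{equation*}
x(h)+y(h)=\frac{1}{2}\int_0^h (h-s)^2\bigl(\dddot x(s)+\dddot y(s)\bigr)\,ds,
\end{equation*}
so on any interval $[0,h_0]$ on which $\dddot x+\dddot y$ is bounded by some $M_{h_0}$, one obtains $x(h)+y(h)\le \tfrac{1}{6}M_{h_0}h^3$. Setting $A_{h_0}:=M_{h_0}/6$ yields the lemma.

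The boundedness of the third derivatives on $[0,h_0]$ follows by repeatedly differentiating the smooth right-hand side of (\ref{eqn_xy_ode}) and using that $U$ is geodesically $L$-smooth, $\G$ is compact, and the friction $\gamma$ and inner product on $\g$ are fixed; all the constants that appear can be bounded uniformly on a compact $h$-interval. The main obstacle I anticipate is the book-keeping needed to justify $\dot x(0)=\dot y(0)=\ddot x(0)=\ddot y(0)=0$: this requires unwinding the construction of the shadow SDE of Lemma \ref{lemma_SDE_splitting} to identify the leading-order mismatch between its coefficients and those of (\ref{eqn_sampling_SDE}), then tracking how that mismatch propagates through the ODE (\ref{eqn_xy_ode}). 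Once that vanishing to second order in $h$ is established, the $h^3$ conclusion is essentially a two-line Taylor-remainder estimate.
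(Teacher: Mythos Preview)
Your Taylor-expansion strategy is sound and does prove the lemma, but you have misidentified where the work lies. The ODE system (\ref{eqn_xy_ode}) is given \emph{explicitly}: the forcing term for $x$ is $C_2Lt^2$, and for $y$ it is $4x$ plus polynomial terms in $t$ of degree $\ge 2$. From this explicit form, the vanishing $x(0)=\dot x(0)=\ddot x(0)=0$ and $y(0)=\dot y(0)=\ddot y(0)=0$ is a one-line computation by direct differentiation of the right-hand side; there is no need to ``unwind the construction of the shadow SDE'' or compare coefficients of (\ref{eqn_sampling_SDE}) with those of the shadow dynamics in Lemma \ref{lemma_SDE_splitting}. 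All of that SDE-level analysis was already absorbed into the derivation of (\ref{eqn_xy_ode}) inside the proof of Theorem \ref{thm_local_error_W2}; the present lemma is a purely elementary ODE estimate and requires none of the geometric or stochastic ingredients you list. Once the forcing is seen to be $\mathcal{O}(t^2)$, the boundedness of $\dddot x,\dddot y$ on $[0,h_0]$ is immediate because the right-hand side of (\ref{eqn_xy_ode}) is polynomial in $t$ and linear in $(x,y)$.

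The paper's own argument is different in presentation but exploits the same structural fact. Instead of Taylor-expanding, it freezes $t^2\le h^2$ in the forcing terms to obtain linear ODEs with constant forcing, solves them explicitly (e.g.\ $x(t)\le C_2 h^2(e^{Lt}-1)$), and reads off the $\mathcal{O}(h^3)$ order at $t=h$. Your Taylor route is equally short once you realize the vanishing of the low-order derivatives is trivial; the two proofs are essentially equivalent reformulations of ``the forcing in (\ref{eqn_xy_ode}) is $\mathcal{O}(t^2)$''.
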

Then, in the next subsection, we will combine this local error in $d^2$ and the contractivity of the continuous dynamics to establish the contractivity of our sampler.

\subsection{Local error propagation}
A standard technique for analyzing the sampling error of a sampler based on the discretization of a continuous dynamics is to transfer its infinite long time numerical integration error to a Wasserstein control of the difference between the continuous and discrete dynamics. To do so, one typically first seeks the contractivity of the continuous dynamics (Thm. \ref{thm_contractivity_SDE_Wrho}), and then use that to both control the accumulation of local integration errors into global integration error, and bound the distance between the continuous dynamics and the target distribution so that the distance between the discrete dynamics and the target can also be bounded. For both tasks, triangle inequality is leveraged. However, as mentioned earlier in Sec. \ref{sec_semi_distance}, $\rho$ is only a semi-distance and does not satisfy triangle inequality, and we develop  the following lemma as an alternation.

\begin{lemma}[Modified triangle inequality for $\rho$]
\label{lemma_triangular_ineq_rho}
    For any points $(g, \xi)$, $(\hat{g}, \hat{\xi})$ and $(\tilde{g}, \tilde{\xi})$,
    \begin{align*}
        \rho((\hat{g}, \hat{\xi}), (\tilde{g}, \tilde{\xi}))\le \left[1+A_1d((g, \xi), (\tilde{g}, \tilde{\xi}))\right]\rho((\hat{g}, \hat{\xi}), (g, \xi))+A_2d^2((g, \xi), (\tilde{g}, \tilde{\xi}))
    \end{align*}
    for some constants $A_1, A_2>0$ (their detailed expressions are given in Eq. \ref{eqn_A}).
\end{lemma}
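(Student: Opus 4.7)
My approach is to exploit the product structure $\rho((\hat g,\hat\xi),(\tilde g,\tilde\xi)) = f(r(\cdot))\,G(\cdot)$ and bound how each factor changes when the reference point $(g,\xi)$ is replaced by $(\tilde g,\tilde\xi)$. The plan is to derive a linear perturbation bound on $r$, a perturbation bound on $G$ that already exhibits a $(1+A_1 d)+O(d^2)$ shape, then multiply the two bounds and regroup into the advertised form.

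\textbf{Step 1: controlling $r$.} The $\alpha\,d(\hat g,\cdot)$ piece is handled by the ordinary triangle inequality on $\G$. For the twisted term $\norm{\log\tilde g^{-1}\hat g+\gamma^{-1}(\hat\xi-\tilde\xi)}$, I would add and subtract $\log g^{-1}\hat g+\gamma^{-1}(\hat\xi-\xi)$ and use that under the left-invariant metric $\norm{\log\tilde g^{-1}\hat g-\log g^{-1}\hat g}$ is Lipschitz in $d(g,\tilde g)$ (away from the cut locus, which is a $\d\!g$-null set and thus harmless under the absolute continuity hypothesis adopted throughout the paper). Combining gives $r((\hat g,\hat\xi),(\tilde g,\tilde\xi))\le r((\hat g,\hat\xi),(g,\xi))+C_1\,d((g,\xi),(\tilde g,\tilde\xi))$ with $C_1$ depending only on $\alpha$ and $\gamma$. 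Concavity of $f$ together with $f'_+(0)=1$ makes $f$ one-Lipschitz, hence $f(r_{\text{new}})\le f(r_{\text{old}})+C_1 d$, while the uniform ceiling $f\le f(R)$ will be needed in Step~3.

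\textbf{Step 2: controlling $G$.} I would expand $\norm{\hat\xi-\tilde\xi}^2=\norm{\hat\xi-\xi}^2+2\ip{\hat\xi-\xi}{\xi-\tilde\xi}+\norm{\xi-\tilde\xi}^2$, bound the cross term by weighted Young's inequality with weight $d((g,\xi),(\tilde g,\tilde\xi))$, and use $\norm{\xi-\tilde\xi}\le d((g,\xi),(\tilde g,\tilde\xi))$. This yields $G(\hat\xi,\tilde\xi)\le(1+d)\,G(\hat\xi,\xi)+C_2\,d(1+d)$, where the $d=0$ case is trivial by continuity.

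\textbf{Step 3: combining, and the main obstacle.} Multiplying the Step~1 and Step~2 estimates gives, schematically, $\rho_{\text{new}}\le(1+d)\,\rho_{\text{old}}+(1+d)\bigl[C_1\,d\,G_{\text{old}}+C_2 f(R)\,d\bigr]$. The hard part is converting the residual $G_{\text{old}}\,d$ and pure-$d$ terms into the required $(1+A_1 d)\rho_{\text{old}}+A_2 d^2$ shape: a naive bound $G_{\text{old}}\le$ const would leave an un-absorbable linear-in-$d$ piece. For this I plan to invoke Lemma~\ref{lemma_equivalence_distance} in the form $G(\hat\xi,\xi)\le 1+(\beta/C_\rho)\,\rho((\hat g,\hat\xi),(g,\xi))$, turning $G_{\text{old}}\,d$ into a $d\,\rho_{\text{old}}$ piece (absorbed into $A_1 d\,\rho_{\text{old}}$) plus constant multiples of $d$. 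The remaining pure-$d$ residuals are then absorbed using the compactness of $\G$ (which bounds the relevant diameters) and the explicit functional form of $f$ chosen in Sec.~\ref{sec_choose_parameter_f}; this is precisely where the semi-distance design of Sec.~\ref{sec_semi_distance} pays off, and it is where the concrete values of $A_1,A_2$ in Eq.~\ref{eqn_A} come from.
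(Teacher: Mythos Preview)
Your Steps 1 and 2 are essentially the paper's argument: it too decomposes $\rho_{\text{new}}-\rho_{\text{old}}$ as $f\cdot(G_{\text{new}}-G_{\text{old}})+(\Delta f)\cdot G_{\text{old}}$, bounds the $G$-increment via $\|\hat\xi-\tilde\xi\|^2-\|\hat\xi-\xi\|^2\le 2\delta\|\hat\xi-\xi\|+\delta^2$ with $\delta:=d((g,\xi),(\tilde g,\tilde\xi))$, and bounds $\Delta f$ using $f'\le 1$ together with $|r_{\text{new}}-r_{\text{old}}|\le(1+\alpha+\gamma^{-1})\delta$. (The paper simply asserts this last bound with constant $1$ for the $\log$-piece rather than invoking cut-locus regularity as you do.)

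The real divergence is your Step 3. The paper never touches Lemma~\ref{lemma_equivalence_distance} or the constant $C_\rho$; it uses two much more direct observations. For the $f\cdot\Delta G$ piece, it simply notes that $\beta f(r_{\text{old}})\|\hat\xi-\xi\|^2\le f(r_{\text{old}})\bigl(1+\beta\|\hat\xi-\xi\|^2\bigr)=\rho_{\text{old}}$, which already converts the dominant part of the $G$-increment into a $\rho_{\text{old}}$ contribution, while the remainder is capped via $f\le f(R)$ into $2\beta f(R)\delta^2=A_2\delta^2$. For the $(\Delta f)\cdot G_{\text{old}}$ piece, the paper writes $G_{\text{old}}=\rho_{\text{old}}/f(r_{\text{old}})$ and combines this with the cap $f\le f(R)$ to obtain the factor $\tfrac{1+\alpha+\gamma^{-1}}{f(R)}\,\delta\,\rho_{\text{old}}$. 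This is precisely where the constants $A_1=1+\tfrac{1+\alpha+\gamma^{-1}}{f(R)}$ and $A_2=2\beta f(R)$ in Eq.~\eqref{eqn_A} come from. Your detour via $G_{\text{old}}\le 1+(\beta/C_\rho)\rho_{\text{old}}$ is more circuitous, would load the very small constant $C_\rho$ into the denominator of $A_1$, and would not reproduce Eq.~\eqref{eqn_A}.

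Your instinct that a leftover linear-in-$\delta$ residual is the crux is correct, but your proposed fix---``absorbed using compactness of $\G$ and the explicit functional form of $f$''---is not a proof step; neither compactness of $\G$ nor the specific $f$ of Sec.~\ref{sec_choose_parameter_f} is invoked by the paper here. What the paper actually uses at that point is the trivial identity $G_{\text{old}}=\rho_{\text{old}}/f(r_{\text{old}})$ and the bound $\beta f\|\hat\xi-\xi\|^2\le\rho_{\text{old}}$. You should replace your Step-3 plan with those two direct facts; that is what closes the argument in the paper's framework and delivers the stated $A_1,A_2$.
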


Combining contractivity of continuous SDE (Thm.\ref{thm_contractivity_SDE_Wrho}), local discretization error (Thm.\ref{thm_local_error_W2}) and modified triangle inequality for $\rho$ (Lemma \ref{lemma_triangular_ineq_rho}), we have the following result quantifying how error propagates after one step (more precisely, given two initial conditions, one evolving under the continuous dynamics for time $h$ and the other iterated by the discrete algorithm \eqref{eqn_discrete_splitting} for one $h$-step, how the difference of the results changes from the initial difference):

\begin{theorem}[Propagation of error under $\mathbb{E}_\rho$]
\label{thm_error_propagation}
Suppose $\Law(g_0, \xi_0)$ and $\Law(\hat{g}_0, \hat{\xi}_0)$ are absolute continuous w.r.t. $\d g\d \xi$. After one iteration from 
$(\tilde{g}_0, \tilde{\xi}_0)=(g_0, \xi_0)$ to $(\tilde{g}_h, \tilde{\xi}_h)$ by Eq.\eqref{eqn_discrete_splitting} with step size $h$, and exact evolution of $\hat{g}_0, \hat{\xi}_0$ by time $h$, we have, $\forall h>0$,
    \begin{align*}
        \mathbb{E}\rho((\hat{g}_h, \hat{\xi}_h), (\tilde{g}_h, \tilde{\xi}_h))\le\exp(-ch)\mathbb{E}\rho((\hat{g}_0, \hat{\xi}_0), (g_0, \xi_0))+E(h)
    \end{align*}
    where $E(h)$ is given by
    \begin{equation}
    \label{eqn_E}
        E(h):=\left(\frac{A_1}{2h^\frac{3}{2}}+A_2\right)\left(x(h)+y(h)\right)+\frac{A_1}{2}f(R)^2(1+8C_2+16C_4)h^\frac{3}{2}
    \end{equation}
    using the notation $x(h)$ and $y(h)$ from Lemma \ref{thm_local_error_W2}.
\end{theorem}

\subsection{Mixing in $W_\rho$}
By applying this local error propagation recurrently, we can obtain a global result:
\begin{corollary}[Nonasymptotic error bound under $\mathbb{E}_\rho$]
\label{cor_error_Erho}
Under the same condition as Thm. \ref{thm_error_propagation}, we have, for $k=0,1,\cdots$,
    \begin{align*}
        \mathbb{E}\rho((\hat{g}_{kh}, \hat{\xi}_{kh}), (\tilde{g}_k, \tilde{\xi}_k))\le e^{-ckh} \mathbb{E}\rho((\hat{g}_0, \hat{\xi}_0), (\tilde{g}_0, \tilde{\xi}_0))+\frac{E(h)}{1-\exp(-ch)}
    \end{align*}
\end{corollary}

The two semi-distances $W_\rho$ and $\mathbb{E}\rho$ are closely related, and we can derive the following error bound in $W_\rho$, for measuring the difference between distributions.
\begin{theorem}[Nonasymptotic error bound under $W_\rho$]
\label{thm_global_error_Wrho}
    Given Assumption \ref{assumption_general}, \ref{assumption_L_smooth} and the inner product on $\mathfrak{g}$ chosen as Lemma \ref{lemma_ad_self_adjoint}, if the initial condition $(g_0, \xi_0)\sim \nu_0$ satisfies $W_\rho(\nu_0, \nu_*)< \infty$ and $\nu_0$ is absolute w.r.t. $\d g\d \xi$, then $\forall k=1,2,\dots$, the density of scheme Eq. \eqref{eqn_discrete_splitting} starting from $\nu_0$ has the following $W_2$ distance from the target distribution:
    \begin{align*}
        W_\rho(\tilde{\nu}_k, \nu_*)\le e^{-ckh} W_\rho(\nu_0, \nu_*)+\frac{E(h)}{1-\exp(-ch)},
    \end{align*}
    where $E(h)$ is defined in Eq.\ref{eqn_E}. Note this holds $\forall h>0$, but $E(h)$ can grow exponentially.
\end{theorem}

\subsection{Global sampling error in $W_2$}
Via the global $W_\rho$ error bound (Thm.\ref{thm_global_error_Wrho}) and the property that $d^2$ is controlled by $\rho$ (Lemma \ref{lemma_equivalence_distance}), we can also have a nonasymptotic control of the sampling error in a more common way of measurement, namely $W_2$, which is a true distance between distributions this time:
\begin{theorem}[Nonasymptotic error bound under $W_2$]
\label{thm_global_error_W2}
    Under the same assumption as Thm. \ref{thm_global_error_Wrho}, we have
    \begin{align*}
        W_2(\tilde{\nu}_k, \nu_*)\le C_\rho\left(e^{-ckh} W_\rho(\nu_0, \nu_*)+\frac{E(h)}{1-\exp(-ch)}\right)
    \end{align*}
\end{theorem}
The requirement that initial condition is absolute continuous w.r.t. $\d \g \d \xi$ is only for technical reasons for the proof but not needed in implementation. See Rmk. \ref{rmk_absolute_continuity_initialization} for details. 
\begin{remark}
    Notice that $E(h)$ in Eq. \eqref{eqn_E} is of order $h^{\frac{3}{2}}$ and $1-\exp(-ch)$ is of order $h$, which means the bias in the second term created by discretization error converges to 0 when step size is infinitely small, and the bias is asymptotically of order $h^{0.5}$.
\end{remark}

\section{Numerical demonstration}
\label{sec_numerical_experiment}
Consider an example of sampling on the Lie group $\mathsf{SO}(n)$, which we embed in the matrix space $\mathbb{R}^{n\times n}$, i.e., $\mathsf{SO}(n):=\left\{X\in \mathbb{R}^{n\times n}: X^\top X=I, \det(X)=1\right\}$ (Example \ref{example_SOn}). The inner product in Lemma \ref{lemma_ad_self_adjoint} is given by $\ip{A}{B}=\tr(A^\top B)$. Under this innner product, the left-trivialized gradient $\lt{g}\nabla U(g)$ can be calculated as follows: suppose $\mathsf{SO}(n)$ is represented by $n\times n$ matrices, and $U$ is a real valued function defined on a subset of $\mathbb{R}^{n\times n}$. Denote its Euclidean gradient $\nabla_E U$, which can be calculated either by backward propagation or by closed-form solution. In this case, the Riemannian gradient $\nabla U(X)$ is given by $\nabla_E U(X)-X \left(\nabla_E U(X)\right)^\top X$ and the left-trivialized gradient $\lt{g}\nabla U(g)$ is given by $X^{-1}\nabla U(X)=X^\top \nabla_E U(X)-\left(\nabla_E U(X)\right)^\top X$.

Now we numerically experiment on a specific potential $U(X)=-10X_{1,1}^2$. We let $n=10$, and the dimension of the group is thus 45. Note even though the potential might appear like a convex function, the target distribution is actually multimodal, and we recall again there is nonconstant geodesic convex function on this manifold (e.g., Sec.\ref{sec_compare_conditions}).

The parameters for our sampler Algo. \ref{algo_sampling_Lie_group} are $\gamma=1$ and $h=0.1$. All the Markov chains are initialized by a fixed random generated orthogonal matrix. We focus on the variable $X[0,0]$ (the top left element of the matrix).

Fig. \ref{fig_experiment_SO_a} compares the samples from our sampler to the ground truth samples generated by rejection sampling, showing our sampler is sampling the correct Gibbs distribution. Fig. \ref{fig_experiment_SO_b} is another evidence that our sampler is dynamically sampling from a multimodal distribution. Exponential convergence of our sampler can be observed from Fig. \ref{fig_experiment_SO_c}, as theoretically proved in Thm. \ref{thm_SDE_error_W2} and \ref{thm_global_error_W2} despite of the multimodality. Fig. \ref{fig_experiment_SO_d} and \ref{fig_experiment_SO_e} visualize how the density evolves as our discrete process mixes and equilibrates.

\begin{figure}[H]
\label{fig_experiment_SO}
    \centering
    \subfigure[Histogram of values collected along a single trajectory, compared to ground truth]{\label{fig_experiment_SO_a}\includegraphics[width=0.33\textwidth, height=4cm]{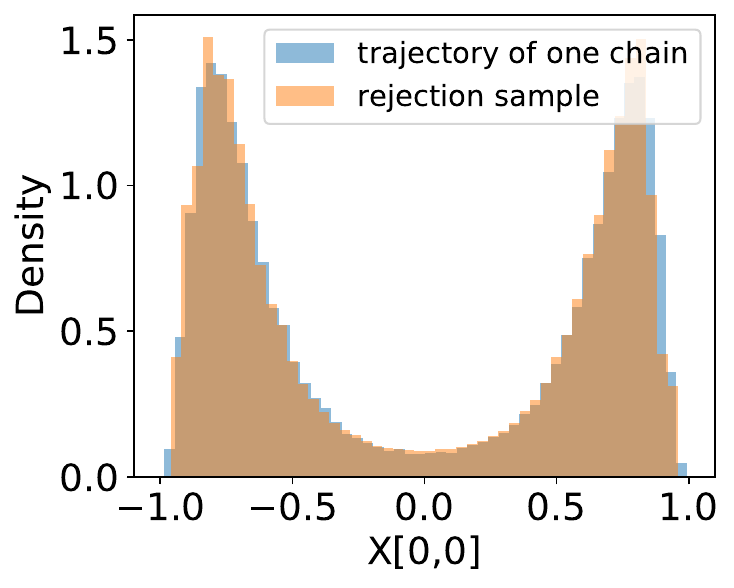}}
    \subfigure[Trajectory of one chain]{\label{fig_experiment_SO_b}\includegraphics[width=0.32\textwidth, height=4cm]{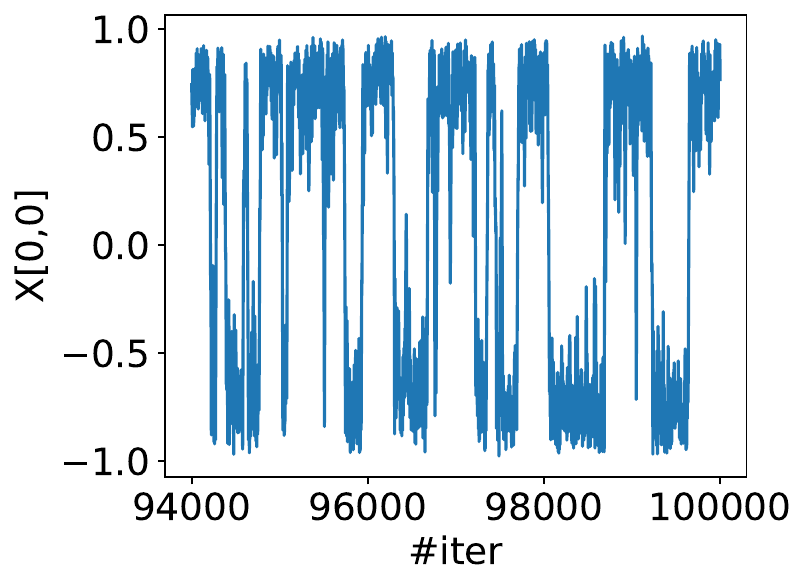}}
    \subfigure[Time evolution of the mean of an ensemble of chains, illustrating speed of convergence]{\label{fig_experiment_SO_c}\includegraphics[width=0.33\textwidth, height=4cm]{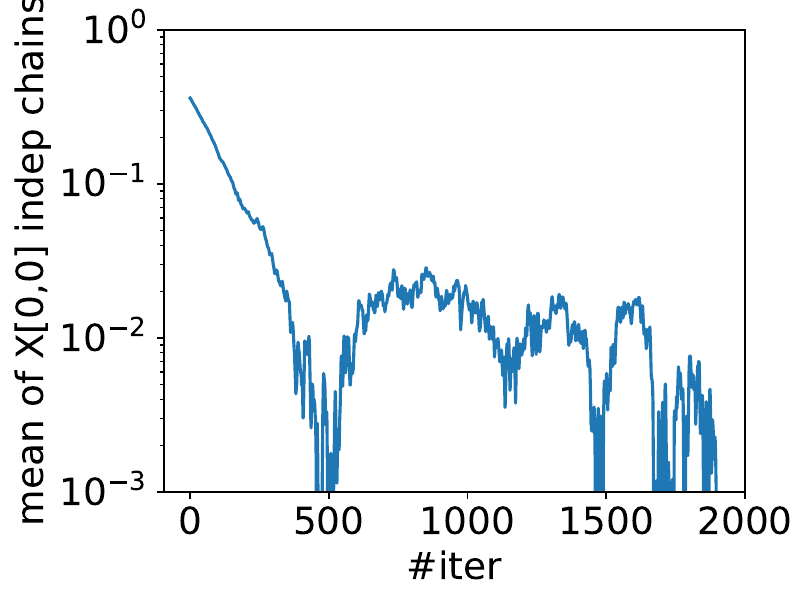}}
    \subfigure[Time evolution of the distribution of an ensemble of independent chains, prior to convergence (iteration \#0-100)
    ]{\label{fig_experiment_SO_d}\includegraphics[width=0.48\textwidth, height=6.5cm]{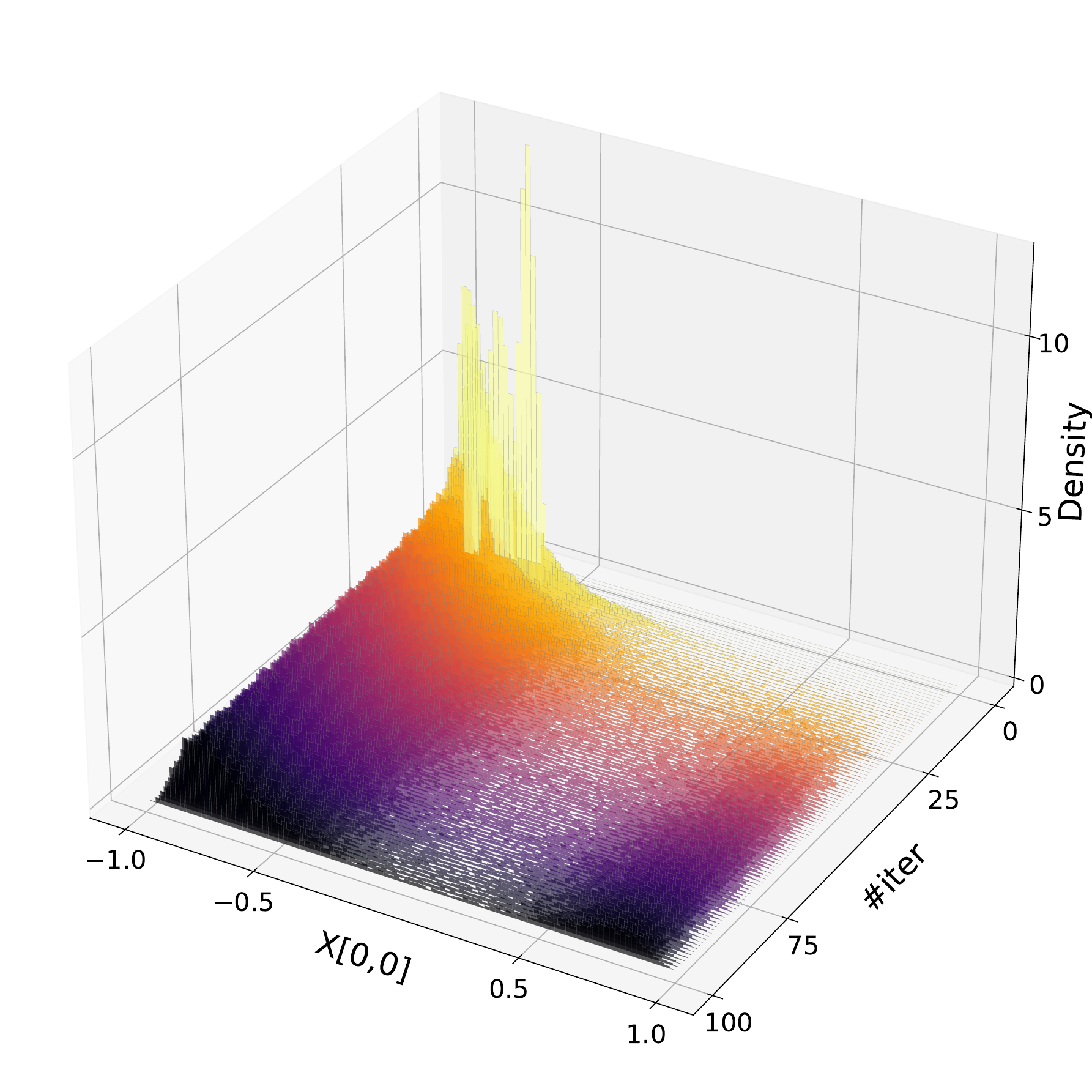}}
    \subfigure[Time evolution of the distribution of an ensemble of independent chains, closer to convergence (iteration \#100-2000)]{\label{fig_experiment_SO_e}\includegraphics[width=0.48\textwidth, height=6.5cm]{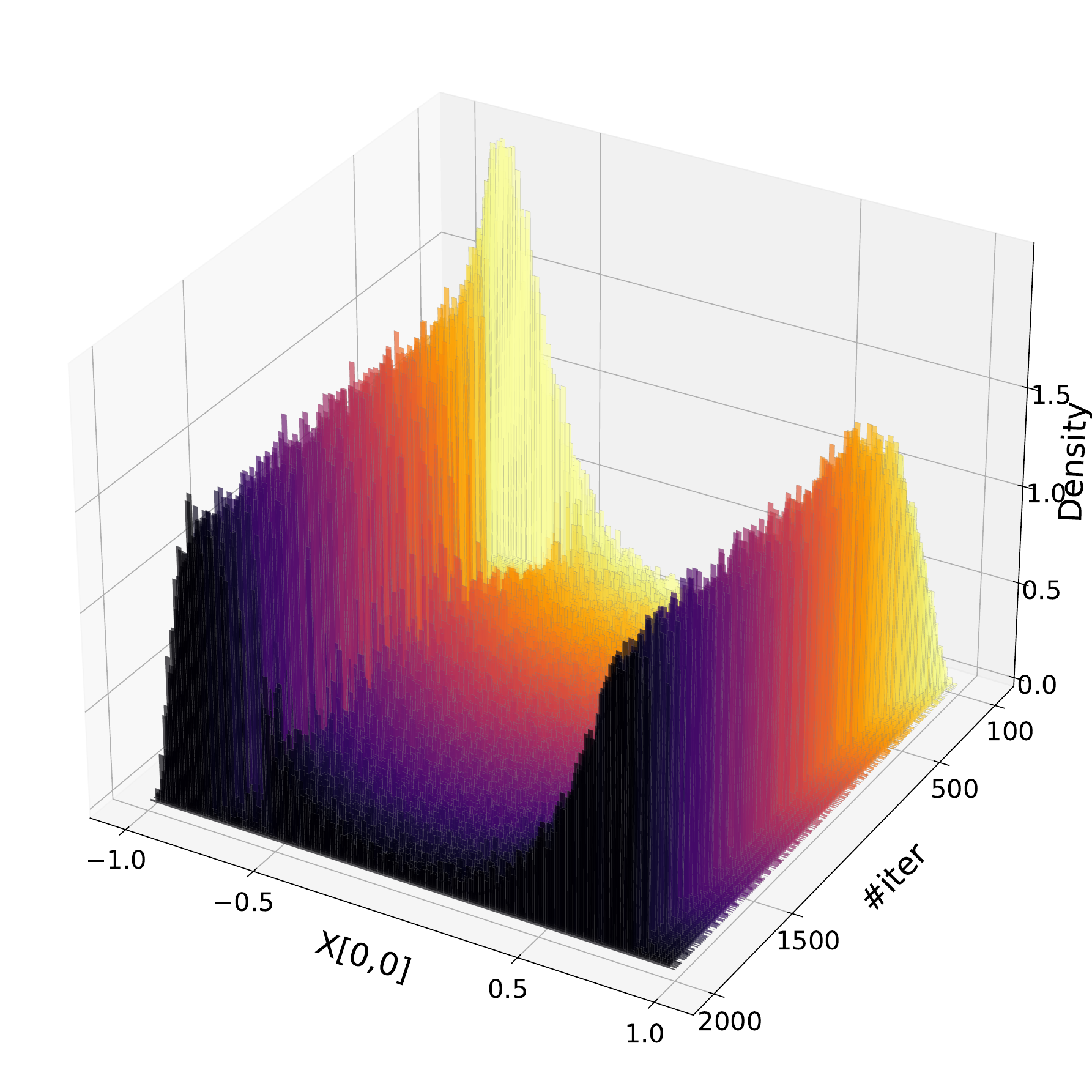}} 
    \caption{Visualizing the exponential convergence of our Lie group sampler (Alg.\ref{algo_sampling_Lie_group}) for sampling a multimodal distribution on $\mathsf{SO}(10)$.}
\end{figure}

\acks{We thank Andre Wibisono, Hongyuan Zha, and Matthew Zhang for inspiring discussions. This research is partially supported by NSF DMS-1847802, Cullen-Peck Scholarship, and GT-Emory Humanity.AI Award. We also thank the reviewers for insightful feedback}.

\bibliography{ref}
\newpage
\appendix
\section{Notation and map}
Please see Table \ref{tab_notation} for a list of notations and Fig. \ref{fig_proof_map} for a graph showing the dependence of theorems.
\begin{table}
\centering
\begin{tabular}{| c|c | c | l |}
\hline
Category &Notation & Location & Description \\ 
\hline
Basic & $\G$ &   & Lie group\\ 
&$\g$ &   & Lie algebra\\ 
&$g$ &  Sec. \ref{sec_Lie_group} & Element in Lie group\\ 
&$\xi$ &   & Element in Lie algebra\\ 
&$m$ &  & dimension of Lie groups /algebra\\ 
\hline
Group  &$\ad$ & Sec. \ref{sec_optimization_dynamics} & Adjoint operator \\
structure&$c_{ij}^k$ & Def. \ref{def_structural_constant} & Structural constant \\
&$C$ & Eq. \eqref{eqn_C} & Operator norm of $\ad$\\
&$N$ & Eq. \eqref{eqn_N} & Set that $\d\log$ do not exist\\
\hline
Riemannian &$\ad^*$ & Sec. \ref{sec_optimization_dynamics} & Coadjoint operator \\
structure&$\Gamma_{ij}^k$ & Sec. \ref{sec_Riemannian_structure} & Christoffel symbol \\
&$\nabla$ & & Gradient or Levi-Civita connection\\
&$d$\tablefootnote{We use $d$ for distance and $\d$ for differential (please note the different fonts).} & Eq. \eqref{eqn_d_TG}  & distance on $\G$ or $\G\times \g$\\
&$D$ & Sec. \ref{sec_inner_product} & Diameter of $\G$\\
\hline
Convergence &$(g_t, \xi_t)$, $(\hat{g}_t, \hat{\xi}_t)$ & Sec. \ref{sec_coupling}  & Coupled r.v. following sampling SDE\\
&$(\tilde{g}_t, \tilde{\xi}_t)$ &  Sec. \ref{sec_splitting} & R.v. from discretization\\
&$C_k$ & Lemma \ref{lemma_bound_xi_power} & Upper bound for $\mathbb{E}\norm{\xi}^k$ \\  
&$x$, $y$ & Eq. \eqref{eqn_xy_ode} & ODE quantifying local numerical error \\
&$h$ &  & Step size for discretization \\
\hline
Others&$\omega$ & Eq. \eqref{eqn_symplectic_2_form} & symplectic 2-form \\
&$\left\{e_i\right\}_{i=1}^m$ & & A set of orthonormal basis in $\g$ \\
&$p$ and $p^{-1}$ & Eq. \eqref{eqn_p}, \eqref{eqn_p_inv} & power series for $\d\log$ and  $\d \exp$\\
&$\alpha$, $\beta$, $R$, $e_t$, $Z_t$, $Q_t$ & Sec. \ref{sec_coupling}, \ref{sec_semi_distance} & coupling and semi-distance \\
&$K$& Sec. \ref{sec_semi_martingale_decomposition}& drift term in decomposition of $\rho$ \\
\hline
\end{tabular}
\caption{A list of notations}
\label{tab_notation}
\end{table}
\begin{figure}[H]
    \centering
    \includegraphics[width=1.0\textwidth]{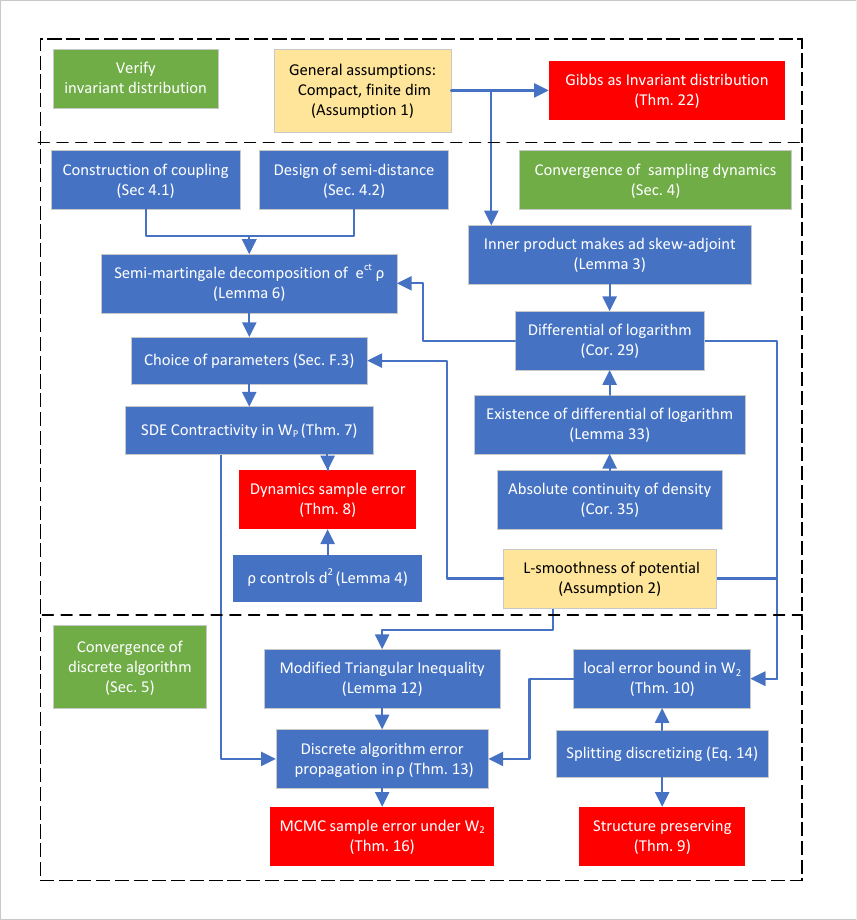}
    \caption{Diagram of the dependence of theorems}
    \label{fig_proof_map}
\end{figure}

\section{More discussion on related works}
\subsection{More discussion on Langevin sampling in Euclidean spaces}
\label{sec_related_work_Euclidean}
In order to analyze the convergence of Langevin-type sampling algorithms, at least two common ways of quantification have been used, based on different conditions and analysis techniques. 1) When convergence is quantified under Wasserstein distance, coupling methods provide useful tools. 2) When convergence is measured via f-divergence, a Lyapunov approach that quantifies convergence in the space of probability densities is often employed. Roughly speaking, 2) requires weaker conditions than 1), but is more challenging to work with when momentum is involved. Here are some more details.

Wasserstein metric is a distance between measures. Coupling methods are usually used for proving the convergence rate due to the definition of Wasserstein distance. For overdamped Langevin, it is a standard proof that synchronous coupling can establish its contractivity under a strong convexity assumption of the potential. Kinetic Langevin is a little more complicated, but it was also known that when its friction coefficient is constant, contractivity still stands after a constant linear coordinate transformation, for both the continuous SDE and its discretization \citep[e.g.,][]{dalalyan2020sampling, cheng2018underdamped}. Contractivity allows the convergence analysis of a continuous dynamics in Wasserstein distance to carry over to the analysis of its time-discretization, i.e. sampling algorithms \citep[e.g.,][]{li2021sqrt}, and it was known that the sampling accuracy can be improved via good numerical discretization \citep[e.g.,][] {shen2019randomized}. In addition, coupling method can be generalized to potential function under weaker conditions, e.g., convexity outside a ball. For example, reflection coupling worked well for overdamped Langevin \citep{eberle2016reflection}, and the momentum case (i.e. kinetic Langevin) is later considered \citep{eberle2019couplings}. The clever but complicated design of coupling and semi-distance function employed in these approaches makes it nontrivial to obtain an explicit convergence rate for the discrete algorithm. 

Another quantification of convergence is based on some f-divergence from one measure to another. For example, Langevin dynamics and its discretization of Langevin Monte Carlo can be shown convergent in various f-divergences under various isoperimetric inequality assumption on the target distribution, such as in KL under Logarithmic Sobolev Inequality (LSI) and in chi-square under Poincare inequality (PI) \citep[e.g.,][]{vempala2019rapid,erdogdu2022convergence,chewi2024log}. 
Since PI is weaker than LSI, and LSI is weaker than convexity outside a ball, in some sense convergence in f-divergence requires a weaker condition than that in Wasserstein distance\footnote{Although better convergence rates can be established under stronger conditions \citep[e.g. ][]{dalalyan2017theoretical, cheng2018convergence, durmus2019analysis, li2021sqrt}.}. The proof of the convergence by \citet{vempala2019rapid}, for instance, is based on splitting the Markov kernel into a deterministic part and a Brownian motion part. The deterministic part keeps the KL/Renyi divergence unchanged and only ensures the invariant distribution is correct. The Brownian motion part mollifies the density and leads to a monotonic decrease in the KL divergence between the current distribution and the target distribution. However, this approach for example is difficult to generalize to the momentum case (i.e. kinetic Langevin, frequently referred to as underdamped Langevin too) due to the degeneracy of noise. Unlike in the \{Wasserstein distance + strong convexity\} case where a constant linear change of coordinates helps recover contractivity, the analysis of convergence in f-divergence under isoperimetric inequalities is often a case-by-case study. For example, \cite{ma2021there} proved the convergence of KLMC in KL divergence under LSI assumption by adding a carefully designed cross term to the joint KL divergence. \cite{zhang2023improved} provided additional tools that allow improved analysis of the convergence of KLMC (e.g., the smoothness assumption on Hessian in \cite{ma2021there} is no longer needed), and convergence in both KL and Renyi is provided. \cite{altschuler2023faster} proposed an innovative technique based on Renyi divergence with Orlicz-Wasserstein shifts and used it to prove many great results, such as the convergence rate with improved dimensional dependence of Metropolis-adjusted KLMC, under a variety of metric including total variance, Chi-square and KL divergence and $W_2$ distance with the assumption that the target distribution satisfies either LSI or PI.

\subsection{When momentum meets curved spaces}
\label{sec_momentum_meets_curved_spaces}
Both curved space and momentum (which renders the underlying Markov process for sampling no longer reversible) lead to extra difficulties. Quantifying numerical error in curved spaces is much harder compared to the cases in flat spaces \citep[e.g.,][]{gatmiry2022convergence,cheng2022efficient}, and momentum leads to the degeneracy of noise, which means extra techniques are needed \citep[e.g.,][]{dalalyan2020sampling, cheng2018convergence, shen2019randomized, ma2021there, zhang2023improved, yuan2023markov, altschuler2023faster}. However, when both curved space and momentum show up together, there is an extra difficulty: when studying kinetic Langevin, we are considering the convergence of measures on the phase space, i.e., the product space of position and momentum. When the space is curved, momentum is a vector in the tangent space\footnote{Rigorously speaking, momentum should be in the cotangent space while velocity is in the tangent space, but we will follow the convention and not distinguish velocity and momentum.} of the current position, and the phase space becomes the tangent bundle. Convergence analysis on the tangent bundle is challenging, due to different reasons in the f-divergence case and the Wasserstein distance case, and we will discuss them separately now.

When quantifying convergence in f-divergence, we are considering the ratio between the current density and the density of the invariant distribution, and the Fokker-Planck equation governs the density evolution, which involves the (spatial) gradient of density. Therefore, we can not bypass the need of calculating the gradient on the tangent bundle. However, a Riemannian metric is required to calculate the gradient, and in this case, we need a Riemannian metric for the entire tangent bundle instead of just the curved space. Although one can induce a Riemannian metric of the tangent bundle from the Riemannian metric of the manifold (e.g., Sasaki metric), how to apply it to construct and analyze a kinetic-type Langevin dynamics or its discretization is an open problem. 

When quantifying convergence in Wasserstein distance, we need to compare how far two points are in the phase space, which is typically then used as the cost function to induce a Wasserstein distance. In the manifold case, one can compare momenta (i.e. vectors in tangent space) via parallel transport, which however could be path-dependent, i.e., different paths connecting the two points give different linear isomorphisms between tangent spaces. In other words, a uniform way to compare momenta at different positions does not exist. This difficulty is not severe in optimization, since we only consider a trajectory in that case. A natural choice is to move the momentum along the position trajectory (e.g.,  \cite{ahn2020nesterov}). However, in sampling tasks, the position as a random variable takes values over the whole manifold, and as a result, we need a uniform way to compare momenta, and so far we do not know how to do so. To the best of our knowledge, there is no known result that extends kinetic Langevin dynamics or KLMC to the manifold case, let alone any analysis of the convergence. 

All those difficulties due to curved spaces and momentum still need to be addressed when we construct and analyze kinetic Langevin Monte Carlo on Lie groups. What further complicates the problem is, the additional group structure is not free lunch. For example, here is one chain of complications: 1) Lie groups with left-invariant metric and sectional curvature negative everywhere are very limited \citep{milnor1976curvatures}. 2) On many Lie groups, there exists two points such that there are more than one geodesics connecting them. This further implies: 3) (geodesic) strong convexity is too strong for many Lie groups as there is no nontrivial convex function along a closed geodesic. As a result, we do not want to make assumptions about the potential function other than smoothness. See Sec. \ref{sec_compare_conditions} for more details. However, despite all those difficulties from Lie groups, we can enjoy a useful technique, namely (left) trivialization, to establish a uniform way of comparing momenta in the tangent spaces at different points. Trivialization provides an alternative tool for bijection between tangent spaces, by related both to a fixed linear space known as the Lie algebra, and plays a key role in this paper. We will build the sampling dynamics by trivialized momentum and then have our structure-preserving discretization based on it. Our proofs will also heavily rely on trivialization.

\subsection{Discussion on commonly used conditions for the potential function}
\label{sec_compare_conditions}
As mentioned earlier, normally some assumptions are required for the potential function. $L$-smoothness is the most commonly assumed one, and our work also assumes it to compare the gradients of potential at different positions. Meanwhile, in most existing results, additional assumptions are used to ensure geometric ergodicity. More precisely - 

In Euclidean space, when $U$ is strongly convex, one can prove the convergence of kinetic Langevin by showing the contractivity of dynamics after a constant coordinate transformation \citep{dalalyan2020sampling}. It is unclear to us how to do the coordinate transformation in curved spaces, because it can no longer be constant due to the nonlinearity of space, but then additional challenges arise (e.g., it no longer induces a metric as essentially done in \citep{dalalyan2020sampling}). In fact, even discussing what happens under convexity is vacuous on Lie groups because there is no nontrivial geodesically convex function on compact Lie group \citep{yau1974non}. An intuition for this is, any convex function on a closed geodesic must be constant.

One relaxation of strong convexity is distant-dissipativity \citep{cheng2022efficient}. In flat space, it is called strong-dissipativity \citep{eberle2016reflection, erdogdu2022convergence}, defined by
\begin{align}
\label{eqn_strong_dissipativity}
    \ip{\nabla U(x)-\nabla U(y)}{x-y}\ge m_1\norm{x-y}^2, \forall \norm{x-y}\ge R_1 \quad \text{for some}\quad m_1,R_1>0
\end{align}
also known as `strongly convexity outside a ball'. Although such condition is widely used and helpful in allowing us to bypass nonconvexity by containing it inside a ball, it is however still a rather strong assumption. The reason is, a function with distant-dissipativity is still convex at a large scale, but no nontrivial (i.e. nonconstant) geodesically convex function exists on manifolds with closed geodesic. Fig.\ref{fig_convex_outside_a_ball} illustrates why this is the case.

\begin{figure}
    \centering
    \subfigure[Parameterization of $\mathsf{SO}(2)\cong S^1$]{\qquad\includegraphics[width=0.3\textwidth]{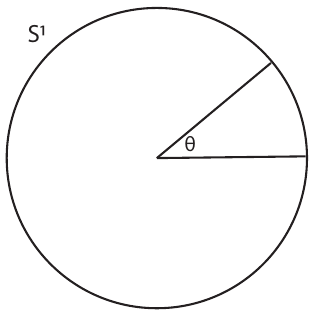} \qquad\label{fig_convex_outside_a_ball_a}}
    \subfigure[Red solid: a function on $\mathsf{SO}(2)$ that might be mistaken as convex outside a ball; Blue solid: its `convexification' which is not convex; Dashed lines: what periodicity and convexity would require, which create inconsistency.]{\quad\quad\includegraphics[width=0.45\textwidth]{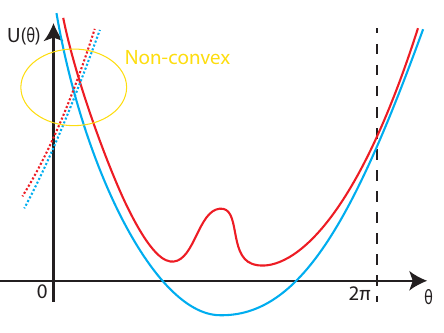}\quad\qquad\label{fig_convex_outside_a_ball_b}}    
    \caption{An illustration of the non-existence of strongly convex or distant-dissipative function on a manifold with closed geodesics. The intuition is a convex function (if any) would grow in both sides, but along a loop, increasing $\theta$ eventually leads to $\theta$ reset to 0 and increasing it again, which means the function needs to be both increasing and decreasing as $\theta$ grows (e.g., the yellow region), which is a contradiction.}
\label{fig_convex_outside_a_ball}
\end{figure}

There are weaker conditions, such as log-Soblev inequality (LSI) and Poincare inequality (PI), but it is unclear how to use them together with coupling methods for manifolds.

To summarize, the commonly used assumptions on the potential function are not necessarily suitable for our compact Lie group case, and this is why we only make the $L$-smooth assumption on the potential function.

\section{More details about Sec. \ref{sec_preliminaries} and \ref{sec_sampling_dynamics}}
\subsection{Examples for Lie groups}
In this section, we give some examples of Lie groups. The most well-known Lie group is the Matrix group:
\begin{example}[General Linear group]
    \label{example_matrix_lie_group}
    Denote \textbf{general linear group} by $\mathsf{GL}(n,\mathbb{R}) := \{X\in\mathbb{R}^{n\times n}: \det{X}\neq 0\}$ with the group multiplication defined by matrix multiplication. The corresponding Lie algebra $\mathfrak{gl}(n,\mathbb{R}):=\mathbb{R}^{n\times n}$ has the matrix commutator: $\lb{A}{B}:=AB-BA$ as its Lie bracket. Consequently, $\lte{g} \xi$ is $g\xi$ (matrix multiplication of two $n$-by-$n$ matrices) and the exponential map is the matrix exponential. 
    
    An natural choice of the inner product on $\mathfrak{gl}$ is $\ip{A}{B}=\tr (A^\top B)$. Under the left-invariant metric induced by this inner product, $\ad^*_A B=B A^\top-A^\top B$. Note that the operator $\ad$ is not skew-adjoint in this case. In fact, the inner product that makes $\ad^*$ skew-adjoint does not exist. A necessary and sufficient condition for the existence of such inner product is provided in \cite{milnor1976curvatures}.
\end{example}
The definition of general Lie groups as manifolds in Sec. \ref{sec_Lie_group} may sound abstract, but in fact, the following remark shows all compact Lie groups can be viewed as a subgroup of a matrix group in Example \ref{example_matrix_lie_group}. 
\begin{remark}[Matrix representation of Lie groups/Lie algebra]
    Peter–Weyl theorem implies that every compact Lie group is a closed subgroup of $\mathsf{GL}(n,\mathbb{R})$ for some $n$. Ado's theorem \citep{ado1947representation} states that every finite-dimensional Lie algebra on $\mathbb{R}$ can be viewed as a Lie subalgebra of $\mathfrak{gl}(n, \mathbb{R})$ for some $n$.
\end{remark}

Although the general linear group is not compact making our assumption \ref{assumption_general} fails, many of its Lie subgroups are compact. A nice example is the following:
\begin{example}[Special Orthogonal group $\mathsf{SO}(n)$]
\label{example_SOn}
    The special orthogonal group is defined as 
    \begin{equation*}
        \mathsf{SO}(n):=\left\{X\in\mathbb{R}^{n\times n}: X^\top X=I, \det(X)=1\right\}   
    \end{equation*}
    The corresponding Lie algebra is $\mathfrak{so}(n):=\left\{\xi\in\mathbb{R}^{n\times n}: \xi^\top+ \xi=0\right\}$. Since it is a Lie subgroup, the group operation, left translation, and exponential map are the same as general linear group (Example \ref{example_matrix_lie_group}). The inner product that makes $\ad$ skew-adjoint in Lemma \ref{lemma_ad_self_adjoint} is given by $\ip{A}{B}:=\tr (A^\top B)$.
\end{example}

We also give an explicit example such that an inner product that makes $\ad$ skew adjoint (Lemma \ref{lemma_ad_self_adjoint}) may not exist when assumption \ref{assumption_general} does not hold:
\begin{example}[Lie group with constant negative curvature]
    We consider a 2-dim Lie group whose Lie algebra has basis $e_1$ and $e_2$. Define a linear operator as $l:\G\to\mathbb{R}$ by $l(x e_1+y e_2):=ax+by, \forall x, y$ for some constants $a, b\neq 0$ and the Lie bracket as $\lb{e_1}{e_2}=l(e_1) e_2-l(e_2)e_1$.  Example 1.7 in \cite{milnor1976curvatures} shows this Lie group has strictly negative sectional curvature for any left-invariant metric.

    We can explicitly express this example as a Lie subgroup of the general linear group using adjoint representation. Denote $\G:=\left\{x e_1+y e_2:x,y\in \mathbb{R}\right\} \subset \mathsf{GL}(2, \mathbb{R})$ where the basis are defined as $e_1=\begin{pmatrix}
        0&0\\
        -b&a
    \end{pmatrix}$
    and 
    $e_2=\begin{pmatrix}
        b&-a\\
        0&0
    \end{pmatrix}$.  Together with Eq. \ref{eqn_sectional_curvature}, this tells us the inner product in Lemma \ref{lemma_ad_self_adjoint} that makes $\ad$ skew-adjoint does not exist.
\end{example}

The flat Euclidean space is a trivial Lie group and our sampling dynamics recover kinetic Langevin dynamics in the Euclidean space. 
\begin{example}[Lie group structure for Euclidean spaces]
\label{example_Euclidean_Lie_group}
    Euclidean space is a trivial (commutative)  Lie group, i.e., when $\G=\mathbb{R}^n$ with the group multiplication defined as $g_1 g_2:=g_1+g_2$ (vector summation). The corresponding Lie algebra is $\g=\mathbb{R}^n$ with vanish Lie bracket. The exponential map is the identity map on $\mathbb{R}^n$.
\end{example}
The left-trivialization in the Euclidean space is the identity map, and Eq. \eqref{eqn_sampling_SDE} in Euclidean spaces becomes
    \begin{equation}
    \begin{cases}
        \dot{q}=p\\
        \d p=-\gamma p \d t-\nabla U(q)\d t+\sqrt{2\gamma}\d W
        \label{eqn_sampling_SDE_Euclidean}
    \end{cases}
\end{equation}
which is the well-known kinetic Langevin dynamics.

Although many elegant results have been established for the Euclidean case (Sec. \ref{sec_related_work_Euclidean}, especially \cite{eberle2019couplings}), our analysis only focuses on compact Lie groups and cannot be applied to the Euclidean sampler (despite that our algorithm does).

\subsection{Symplectic structure on Lie groups}
\label{sec_symplectic_structure}
The tangent bundle of a Lie group $T\G\cong \G\times \g$ has a natural symplectic structure \citep[Prop. 4.4.1]{abraham1978foundations}. More specifically, at $(g, \mu)\in \G\times\g$, the symplectic 2-form is given by
\begin{equation}
\label{eqn_symplectic_2_form}
    \omega_{(g,\mu)}((v,\alpha), (w,\beta))=\ip{\beta}{\lt{g}v}-\ip{\alpha}{\lt{g}w}+\ip{\mu}{[\lt{g}v, \lt{g}w]}
\end{equation}
for any $(v, \alpha), (w, \beta)\in \G\times T_g\G$.

Given the symplectic structure above, we can provide the optimization dynamics with a mechanic view. Setting the Hamiltonian $H:G\times \g\to \mathbb{R}$ as $H(g, \xi):=U(g)+\frac{1}{2}\ip{\xi}{\xi}$ gives a Hamiltonian field $X_H$, defined as the unique vector field on $\G\times \g$ satisfying $\d H=\omega(X_H, \cdot)$. The Hamiltonian flow $\frac{\d}{\d t}(g_t, \xi_t)=X_H$ is exactly Eq. \eqref{eqn_optimization_ODE} with $\gamma=0$. The term $\ad_\xi \xi$ in Eq. \eqref{eqn_optimization_ODE}, which does not show in flat Euclidean spaces, comes from the third term in the symplectic 2-form in Eq. \eqref{eqn_symplectic_2_form}.

On a symplectic manifold, there is always a natural volume form  $\omega^m:=\omega\wedge\dots\wedge\omega$, which provides a base measure. \cite{arnaudon2019irreversible} uses $\omega^m$ as the base measure distribution when proving the invariant distribution. How is this symplecticity-based measure $\omega^m$ related to the group-structure-based measure $\d g \d \xi$ in Thm.\ref{thm_invariant_distribution_Gibbs}? They are identical, as proved in Thm. \ref{thm_equivalence_of_measure}. Note that this theorem do not depend our special choice of inner product that makes $\ad$ skew-adjoint in Lemma \ref{lemma_ad_self_adjoint}.

\begin{theorem}[Equivalence of base measure]
\label{thm_equivalence_of_measure}
    The product measure on $\G\times \g$, i.e., the product of left Haar measure on $\G$ and Lebesgue measure on $\g$, is identical to the measure induced by the volume form $\omega^m$ up to a constant.
\end{theorem}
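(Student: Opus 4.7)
The plan is to evaluate $\omega^m$ pointwise in an adapted frame and match it, up to a constant and a sign, with the product of a left-invariant volume form on $\G$ and a Lebesgue volume form on $\g$. Fix an orthonormal basis $\{e_i\}_{i=1}^m$ of $\g$ and, at each point $(g,\mu)\in \G\times\g$, form the frame
\begin{equation*}
V_i := (T_e L_g e_i,\,0),\qquad W_i := (0,\,e_i),\qquad i=1,\dots,m,
\end{equation*}
of $T_{(g,\mu)}(\G\times\g)\cong T_g\G\oplus\g$. Plugging these pairs directly into Eq.~\eqref{eqn_symplectic_2_form} and using $T_g L_{g^{-1}}(T_e L_g e_i)=e_i$ gives the block matrix
\begin{equation*}
M(g,\mu)\;=\;\bigl[\omega(\cdot,\cdot)\bigr]_{V_i,W_j}\;=\;\begin{pmatrix} A(\mu) & I_m \\ -I_m & 0 \end{pmatrix},\qquad A_{ij}(\mu)=\ip{\mu}{[e_i,e_j]},
\end{equation*}
with $A$ antisymmetric. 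The dual coframe is $V_i^{*}=\theta_i$, the left-invariant $1$-form on $\G$ with $\theta_i(T_e L_g e_j)=\delta_{ij}$, pulled back to $\G\times\g$, and $W_i^{*}=\mathrm{d}\xi_i$, the coordinate $1$-forms on $\g$.

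Next I will kill the $A$-block by a unit-determinant transvection: set $V_i' := V_i - \tfrac12 A_{ij}W_j$ (summation over $j$). A one-line expansion of $\omega(V_i',V_j')$ gives $A_{ij}-2\cdot\tfrac12 A_{ij}=0$, while $\omega(V_i',W_j)=\delta_{ij}$ and $\omega(W_i,W_j)=0$ are unchanged. Thus in the $(V_i',W_i)$ frame, $\omega$ has the standard Darboux matrix, so $\mathrm{Pf}(M)=1$, and because the transvection is upper-triangular with unit diagonal it preserves the volume element of the frame. Equivalently,
\begin{equation*}
\frac{1}{m!}\,\omega^m \;=\; (V_1')^{*}\wedge W_1^{*}\wedge\cdots\wedge (V_m')^{*}\wedge W_m^{*} \;=\; V_1^{*}\wedge W_1^{*}\wedge\cdots\wedge V_m^{*}\wedge W_m^{*},
\end{equation*}
again because the coframe change inverse to the transvection is volume-preserving.

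Finally, reordering the $2m$ factors by the fixed permutation that groups all $V_i^{*}$ to the left of all $W_i^{*}$ produces only a global sign $\varepsilon\in\{\pm 1\}$, so
\begin{equation*}
\omega^m \;=\; \varepsilon\, m!\,\bigl(\theta_1\wedge\cdots\wedge\theta_m\bigr)\wedge\bigl(\mathrm{d}\xi_1\wedge\cdots\wedge\mathrm{d}\xi_m\bigr).
\end{equation*}
The first factor is the left-invariant top form on $\G$, whose associated measure is left Haar $\mathrm{d}g$ (up to the choice of normalization fixed by $\{e_i\}$); the second factor is Lebesgue $\mathrm{d}\xi$ on $\g$. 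This yields $\omega^m = \varepsilon\, m!\, \mathrm{d}g\,\mathrm{d}\xi$, i.e.\ equivalence up to the constant $\pm m!$, as claimed.

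The main technical care-point is the algebraic step of eliminating the $A$-block while tracking that the change of coframe is genuinely volume-preserving and does not depend on $\mu$ in a way that could pick up a nontrivial Jacobian: the transvection is affine on each fibre $\{g\}\times\g$ with $\mu$-dependent but strictly upper-triangular coefficients, hence has constant unit determinant. The rest is bookkeeping of signs from reordering wedge factors, which only contributes a global $\pm$ and does not affect the asserted equivalence of measures.
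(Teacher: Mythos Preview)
Your approach is correct in spirit and reaches the right conclusion, but the transvection step contains a sign slip. With $V_i' = V_i - \tfrac12 A_{ij}W_j$ and using $\omega(V_i,W_l)=\delta_{il}$, $\omega(W_k,V_j)=-\delta_{kj}$, the cross terms are $-\tfrac12 A_{ji}+\tfrac12 A_{ij}= A_{ij}$ (by antisymmetry of $A$), so $\omega(V_i',V_j')=2A_{ij}$, not $0$. The fix is simply to take $V_i' = V_i + \tfrac12 A_{ij}W_j$; then the block is killed and the rest of your argument goes through verbatim, since the transvection is still unit upper-triangular and hence volume preserving regardless of the sign.

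Apart from that slip, your route is a genuinely different one from the paper's. The paper evaluates $\omega^m$ on the same frame by expanding the alternating sum over $S_{2m}$ directly and observing that, because $\omega(W_i,W_j)=0$, every nonvanishing perfect matching must pair each $W_j$ with some $V_i$; the only surviving terms are the $\omega(V_i,W_i)=\delta_{ii}$ pairings, giving a nonzero constant independent of $\mu$. In other words, the paper computes the Pfaffian of your matrix $M(g,\mu)$ combinatorially, while you compute it by a change of basis to Darboux form. Your argument is more conceptual and makes explicit why the $\mu$-dependent block $A(\mu)$ cannot contribute (it is removed by a unit-Jacobian shear); the paper's argument is more elementary and avoids invoking Darboux/Pfaffian language. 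Both buy exactly the same thing: $\omega^m$ on the left-invariant frame is a nonzero constant, hence equals $\mathrm{d}g\,\mathrm{d}\xi$ up to a scalar.
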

\begin{proof}[Proof of Thm. \ref{thm_equivalence_of_measure}]
    The outline of this proof is: we will choose $2m$ vector fields on the tangent space of $2m$-dim manifold $\G\times \g$, and we calculate the ratio of the two volume forms corresponding to those two different vector fields when applying to the $2m$ vector fields. We will prove the ratio is constant, showing the two measures are identical up to a constant.
    
    We fix the following $2m$ vector fields, $\{(\lte{g} e_i, 0)\}_{i=1,\dots, m}$ and $\{(0, e_j)\}_{j=1,\dots, m}$, where vector fields $\lte{g} e_i$ on $\G$ are left invariant vector fields generated by $e_i$. In the following, we prove that $\omega^m_{(g, \mu)}\left((\lte{g} e_1, 0), \dots, (\lte{g} e_m, 0), (0, e_1), \dots, (0, e_m)\right)$ is constant.

    By the expression of $\omega$ in Eq. \eqref{eqn_symplectic_2_form},
    \begin{align*}
        &\omega_{(g, \mu)}((\lte{g} e_i, 0), (\lte{g} e_j, 0))=\ip{\lb{e_i}{e_j}}{\mu}\\
        &\omega_{(g, \mu)}((\lte{g} e_i, 0), (0, e_j))=\delta_{ij}\\
        &\omega_{(g, \mu)}((0, e_i), (\lte{g} e_j, 0))=-\delta_{ij}\\
        &\omega_{(g, \mu)}((0, e_i), (0, e_j))=0
    \end{align*}
    The definition of exterior product gives 
    \begin{align*}
        &\omega^m_{(g, \mu)}\left((\lte{g} e_1, 0), \dots, (\lte{g} e_m, 0), (0, e_1), \dots, (0, e_m)\right)\\
        &=\frac{1}{(2m)!}\sum_{\sigma\in S_{2m}} \operatorname{sgn}(\sigma) \omega^m_{(g, \mu)}\sigma\left((\lte{g} e_1, 0), \dots, (\lte{g} e_m, 0), (0, e_1), \dots, (0, e_m)\right)
    \end{align*}
    By the fact that $\omega_{(g, \mu)}((0, e_i), (0, e_j))$ and $\omega_{(g, \mu)}((\lte{g} e_i, 0), (\lte{g} e_j, 0)) + \omega_{(g, \mu)}((\lte{g} e_j, 0), (\lte{g} e_i, 0))$ vanishes, we have that all the non-vanish terms must have the form $\omega_{(g, \mu)}((\lte{g} e_i, 0), (0, e_i))$, and 
    \begin{align*}
        &\omega^m_{(g, \mu)}\left((\lte{g} e_1, 0), \dots, (\lte{g} e_m, 0), (0, e_1), \dots, (0, e_m)\right)\\
        &=\frac{1}{(2m)!}\sum_{\sigma\in S_{m}} 2^m\prod_{i=1, \dots, m} \omega_{(g, \mu)}\sigma\left((\lte{g} e_i, 0), (0, e_i)\right)\\
        &=\frac{m!2^m}{(2m)!}\prod_{i=1, \dots, m} \omega_{(g, \mu)}\sigma\left((\lte{g} e_i, 0), (0, e_i)\right)\\
        &=\frac{m!2^m}{(2m)!}
    \end{align*}
    Since both $\omega^m$ and 
    \begin{align*}
        \d g \d \xi=\d(\lte{g} e_1, 0) \wedge \dots \wedge \d(\lte{g} e_m, 0)\wedge \d(0, e_1)\dots\wedge \d(0, e_m)
    \end{align*}
    applied to our $2m$ independent vector fields gives a non-zero constant function, they are identical up to a constant.
\end{proof}    
\subsection{Discussion on the term $\ad^*_\xi \xi$}
\label{sec_discussion_ad_star}
There are several reasons why the term $\ad^*_\xi \xi$ is required in both optimization dynamics Eq. \eqref{eqn_optimization_ODE} and sampling dynamics Eq. \eqref{eqn_sampling_SDE}: 
\begin{enumerate}
    \item From the view of mechanics (discussed in Sec. \ref{sec_symplectic_structure}), it comes from the third term of the natural symplectic 2-form Eq. \eqref{eqn_symplectic_2_form}.
    \item From the view of Riemannian geometry (will be discussed later in Sec. \ref{sec_about_ad_self_adjoint}), it is a term from the definition of geodesics.
    \item Another technical reason is the term is required to ensure the invariant distribution is correct because the divergence of left-invariant vector fields does not always vanish, see more details in the proof of Thm. \ref{thm_invariant_distribution_Gibbs}. 
\end{enumerate}
However, despite the necessity of the term $\ad^*_\xi \xi$ on a general left-invariant metric, we can carefully choose the inner product to make it vanish (Lemma \ref{lemma_ad_self_adjoint} and Rmk. \ref{rmk_inner_product_explicit_expression}), and all the above still hold.

\subsection{Gibbs as invariant distribution of sampling dynamics}
This section will be devoted to showing that the Gibbs distribution $\G\times \g$ Eq. \eqref{eqn_Gibbs_TG} is an invariant distribution of our sampling dynamics Eq. \eqref{eqn_sampling_SDE} under mild conditions. Unlike the stronger Assumption \ref{assumption_general} we made for convergence, we do not require the special choice of inner product on $\g$ in Lemma \ref{lemma_ad_self_adjoint} nor the compactness of Lie group in this section.

\begin{definition}[Structural constant]
\label{def_structural_constant}
    By denoting $\{e_i\}_{i=1}^m$ as a set of orthonormal basis of $\g$ under inner product $\ip{\cdot}{\cdot}$, the structural constant of Lie algebra $\g$ defined as
    \begin{equation*}
        c_{ij}^k=\ip{\lb{e_i}{e_j}}{e_k}
    \end{equation*}
\end{definition}

\begin{lemma}
\label{lemma_support_for_invariant_distribution}
For any $\xi\in \g$, we have
\begin{align*}
    \divg_g(\lte{g}\xi)+\divg_\xi\left(\ad^*_\xi \xi\right)=0
\end{align*}
where $\lte{g}\xi$ is the left-invariant vector field generated by $\xi$.
\end{lemma}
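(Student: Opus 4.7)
The plan is to show that each of the two divergences vanishes individually under Assumption \ref{assumption_general}, with each computation reducing to $\pm\tr(\ad_\xi)$, and then to observe that this trace is zero for compact Lie groups.

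For the Lebesgue divergence $\divg_\xi(\ad^*_\xi\xi)$, I would expand $\xi=\sum_i\xi^i e_i$ in the orthonormal basis and use the structure constants $c_{ij}^k=\ip{[e_i,e_j]}{e_k}$ from Def.~\ref{def_structural_constant} to write the $k$-th coordinate of $\ad^*_\xi\xi$ as $\ip{\xi}{[\xi,e_k]}=\sum_{i,j}c_{ik}^j\xi^i\xi^j$, a homogeneous quadratic in $\xi$. Differentiating with respect to $\xi^k$ and summing produces two groups of terms: one collects coefficients $c_{kk}^j$, which vanish immediately by antisymmetry of the Lie bracket, and the other collapses to $\sum_i\xi^i\sum_k c_{ik}^k=\tr(\ad_\xi)$.

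For the Haar divergence $\divg_g(T_eL_g\xi)$, I would exploit the fact that the flow of the left-invariant vector field $X_\xi(g):=T_eL_g\xi$ is right translation $\phi_t(g)=g\exp(t\xi)$. Since a compact Lie group is unimodular, the left Haar measure $\d\!g$ is simultaneously right-invariant, so $\phi_t^*(\d\!g)=\d\!g$ for all $t$ and hence $\divg_g(X_\xi)=0$. Without compactness the same computation, carried out via the modular-function identity $R_h^*(\d\!g)=|\det\Ad_{h^{-1}}|\,\d\!g$ and Jacobi's formula at $h=\exp(t\xi)$, gives $\divg_g(X_\xi)=-\tr(\ad_\xi)$, which is the exact sign needed for cancellation with the first computation.

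To close the argument, I would invoke Lemma~\ref{lemma_ad_self_adjoint}: compactness provides an inner product on $\g$ making $\ad_\xi$ skew-adjoint, and skew-adjoint operators have zero trace (and the trace of $\ad_\xi$ is an intrinsic, inner-product-independent quantity). Hence $\tr(\ad_\xi)=0$ for all $\xi\in\g$, so both summands vanish individually and the identity follows. The main point to verify carefully is the sign in the modular-function identity behind $\divg_g(X_\xi)=-\tr(\ad_\xi)$, which is easy to get backwards under different conventions; I would cross-check it by pairing $X_\xi$ against a test function $f\in C_c^\infty(\G)$ and using right invariance of $\d\!g$, though in the compact case the unimodularity shortcut bypasses this issue entirely.
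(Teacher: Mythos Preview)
Your proof is correct, and the computation of $\divg_\xi(\ad^*_\xi\xi)$ via structure constants is exactly what the paper does (the paper writes the answer as $-\sum_i\ip{\xi}{\ad^*_{e_i}e_i}$, which unwinds to your $\tr(\ad_\xi)$). The treatment of $\divg_g(T_eL_g\xi)$ differs: the paper works in the coordinate chart $g\exp(x_ie_i)$, uses the Riemannian divergence formula with the Christoffel symbols $\Gamma_{ji}^j$ of the left-invariant metric, and then computes the remaining Lie-derivative term to get $\ip{\xi}{\sum_k\ad^*_{e_k}e_k}$, i.e.\ again $-\tr(\ad_\xi)$. Your route via the flow $\phi_t=R_{\exp(t\xi)}$ and the modular-function identity is cleaner and more conceptual; it sidesteps the coordinate computation entirely and makes the appearance of $\tr(\ad_\xi)$ transparent.

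One remark on the logic of your last paragraph: once you have established $\divg_\xi(\ad^*_\xi\xi)=\tr(\ad_\xi)$ and $\divg_g(X_\xi)=-\tr(\ad_\xi)$, the sum vanishes identically with no further hypotheses, as you yourself note mid-proof. Invoking Lemma~\ref{lemma_ad_self_adjoint} and unimodularity to conclude that each summand is separately zero is a correct but superfluous addendum; the paper explicitly points out that the lemma does not require the bi-invariant inner product, and indeed its proof relies only on the exact cancellation. So you can drop the final step and the proof still closes.
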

\begin{proof}[Proof of Lemma \ref{lemma_support_for_invariant_distribution}]
    Consider the local chart on $\G$ at $g$ given by $\{g\exp(x_i e_i)\}_{i=1}^m$ for $x_i\in [-\varepsilon, \varepsilon]^m$. We have 
    \begin{align*}
        \divg_g(\lte{g}\xi)=\sum_i \partial_i \xi_i +\sum_{ij}\Gamma_{ji}^j \xi_i 
    \end{align*}
    By Eq. \eqref{eqn_christoffel_symbol}, $\Gamma_{ji}^j=0, \forall i, j$ , and the term $\sum_{ij}\Gamma_{ji}^j \xi_i$ vanishes consequently. By our choice of the local coordinate,
    \begin{align*}
        \partial_i \xi_i&=\mathcal{L}_{e_i} \ip{\lte{\hat{g}} \xi}{e_i}|_{\hat{g}=e}\\
        &= \ip{\mathcal{L}_{e_i} \lte{\hat{g}} \xi}{e_i}|_{\hat{g}=e}+ \ip{\lte{\hat{g}} \xi}{\mathcal{L}_{e_i} e_i}|_{\hat{g}=e}\\
        &= \ip{\lb{e_i}{\xi}}{e_i}+ \ip{\lte{\hat{g}} \xi}{\lb{e_i}{e_i}}\\
        &=\ip{\lb{e_i}{ \xi}}{e_i}
    \end{align*} 
    where $\mathcal{L}$ is the Lie derivaitve. In the first line of equation, $\lte{\hat{g}} \xi$ is the left-invariant vector field generated by $\xi$. $|_{\hat{g}=e}$ means we evaluate the Lie derivative at $e$.

    After taking summation w.r.t. $i$, we have $\divg_g(\lte{g}\xi)=\sum_i \partial_i \xi_i=\ip{\xi}{\sum_k \ad^*_{e_k} e_k}$.
    
    For $\divg_\xi\left(\ad^*_\xi \xi\right)$, the divergence is taken in the Euclidean space $\g$ and a direct calculation gives
    \begin{align*}
        \divg\ad^*_\xi \xi&=\sum_i\partial_i \left(\ad^*_\xi \xi\right)_i\\
        &=\sum_{ijk}\partial_i \xi_j \xi_k \ip{\ad^*_{e_j} e_k}{e_i}\\
        &=-\sum_{i}\ip{\xi_j}{\ad^*_{e_i} e_i}\\
        &=-\divg_g(\lte{g}\xi)
    \end{align*}
\end{proof}

Now we are ready to prove Thm. \ref{thm_invariant_distribution_Gibbs}. The outline for the proof is: 1) calculate the infinitesimal generator $\mathcal{L}$ by definition; 2) find its adjoint operator $\mathcal{L}^*$ under $L^2$; 3) verify that the Gibbs distribution is a fixed point of the adjoint of the infinitesimal generator. 
\begin{theorem}
\label{thm_invariant_distribution_Gibbs}
    Suppose the Lie group $\G$ is finite dimensional, then the sampling dynamics Eq.\eqref{eqn_sampling_SDE} has invariant distribution $\nu_*$ in Eq. \eqref{eqn_Gibbs_TG}.
\end{theorem}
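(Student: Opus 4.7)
The plan is to verify that $\nu_*$ is a stationary point of the Fokker--Planck equation associated to the SDE \eqref{eqn_sampling_SDE}, i.e.\ $\mathcal{L}^*\nu_* = 0$, where $\mathcal{L}^*$ is the $L^2$-adjoint of the infinitesimal generator with respect to the base measure $\d\!g\,\d\!\xi$ (the product of the left Haar measure and Lebesgue measure). Concretely, writing $p = \nu_*$ one wants to verify
\begin{equation*}
-\divg_g\bigl(p\, T_eL_g\xi\bigr) - \divg_\xi\bigl(p\,(-\gamma\xi + \ad_\xi^*\xi - T_gL_{g^{-1}}\nabla U(g))\bigr) + \gamma\Delta_\xi p \;=\; 0.
\end{equation*}

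First, I would split the operator into a conservative (Hamiltonian-like) part coming from $T_eL_g\xi$ and $\ad_\xi^*\xi - T_gL_{g^{-1}}\nabla U(g)$, and a dissipative Ornstein--Uhlenbeck part coming from $-\gamma\xi\,\d t + \sqrt{2\gamma}\,\d W_t$. For the OU piece, since $\nu_*$ has the standard Gaussian factor $e^{-\frac{1}{2}\ip{\xi}{\xi}}$ in the $\xi$-variable, a direct computation gives $\gamma\divg_\xi(\xi\nu_*) + \gamma\Delta_\xi \nu_* = 0$ by the usual Gaussian cancellation between the drift divergence ($\gamma m\nu_* - \gamma\|\xi\|^2\nu_*$) and the Laplacian ($-\gamma m\nu_* + \gamma\|\xi\|^2\nu_*$). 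This step does not involve the geometry at all.

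For the conservative piece, expanding the two divergences using the product rule yields
\begin{equation*}
\nu_*\bigl(\divg_g(T_eL_g\xi)+\divg_\xi(\ad_\xi^*\xi)\bigr)
\;-\;\nu_*\ip{T_gL_{g^{-1}}\nabla U(g)}{\xi}
\;-\;\nu_*\ip{\xi}{\ad_\xi^*\xi}
\;+\;\nu_*\ip{\xi}{T_gL_{g^{-1}}\nabla U(g)},
\end{equation*}
where I used $\nabla_\xi\nu_* = -\xi\nu_*$ and the fact that the $g$-derivative of $\log\nu_*$ along the left-invariant vector field $T_eL_g\xi$ equals $-\ip{T_gL_{g^{-1}}\nabla U(g)}{\xi}$. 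The two gradient-of-$U$ terms cancel by symmetry of the inner product, and the term $\ip{\xi}{\ad_\xi^*\xi} = \ip{\ad_\xi\xi}{\xi} = \ip{[\xi,\xi]}{\xi} = 0$ by skew-symmetry of the bracket. What remains is precisely $\nu_*(\divg_g(T_eL_g\xi)+\divg_\xi(\ad_\xi^*\xi))$, which vanishes by Lemma~\ref{lemma_support_for_invariant_distribution}.

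The main obstacle, and the reason the $\ad_\xi^*\xi$ drift is present in the SDE at all, is that on a Lie group equipped with left Haar measure, the left-invariant vector field $T_eL_g\xi$ need not be divergence-free: Lemma~\ref{lemma_support_for_invariant_distribution} identifies its divergence with $-\divg_\xi(\ad_\xi^*\xi)$. Getting the first step (writing $\mathcal{L}^*$ correctly with respect to the correct base measure, and identifying how $(T_eL_g\xi)\log\nu_*$ behaves along the left-invariant direction) is what actually uses the Lie group structure; the rest reduces to algebraic cancellations and the standard OU computation. Note that this argument does not rely on the special inner product of Lemma~\ref{lemma_ad_self_adjoint}, so it holds in the generality stated.
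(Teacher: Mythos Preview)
Your proposal is correct and follows essentially the same route as the paper: write the Fokker--Planck operator $\mathcal{L}^*$ with respect to $\d g\,\d\xi$, split off the Ornstein--Uhlenbeck part (which annihilates the Gaussian factor in $\xi$), and for the Hamiltonian part invoke Lemma~\ref{lemma_support_for_invariant_distribution} to kill the residual $\divg_g(T_eL_g\xi)+\divg_\xi(\ad_\xi^*\xi)$. (Note that the four terms in your displayed expansion carry a uniform sign flip relative to the correct product-rule computation, but since each summand is argued to vanish separately this does not affect the conclusion.)
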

\begin{proof}[Proof of Thm. \ref{thm_invariant_distribution_Gibbs}]
    We first write down the infinitesimal generator $\mathcal{L}$ for SDE \eqref{eqn_sampling_SDE}. For any $f\in C^2(\G\times \g)$, $\mathcal{L}$ is defined as
    \begin{align*}
        \mathcal{L}f (g, \xi):=&\lim_{\delta\rightarrow 0}\frac{\mathbb{E}\left[f(g_\delta, \xi_\delta)|(g_0, \xi_0)=(g, \xi)\right]-f(g, \xi)}{\delta}\\
        =&\ip{\nabla_g f}{\lte{g}\xi}+\ip{\nabla_\xi f}{-\gamma\xi+\ad^*_\xi \xi-\lt{g}(\nabla U(g))}+\gamma \Delta_\xi f\\
        =&\ip{\xi} {\lt{g}\nabla_g f}-\gamma\ip{\nabla_\xi f}{\xi}+\ip{\nabla_\xi f}{\ad^*_\xi \xi}-\ip{\nabla_\xi f}{\lt{g}(\nabla U(g))}+\gamma \Delta_\xi f
    \end{align*}

    We denote the adjoint operator of $\mathcal{L}$ by $\mathcal{L}^*$, i.e., $\mathcal{L}^*:C^2\rightarrow C^2$ satisfying $\int_{\G\times \g}\nu \mathcal{L}f \d g \d \xi=\int_{\G\times \g}f\mathcal{L}^*\nu \d g \d \xi$ for any $f, \nu\in C_0^2(\G\times \g)$. By the divergence theorem, we have
    \begin{align*}
        &\int_{\G\times \g}\nu \mathcal{L}f\, \d g \d \xi\\
        &=\int_{\G\times \g}-f\divg_g (\nu \lte{g}\xi)-f\divg_\xi\left(\nu\ad^*_\xi \xi\right)+f\divg_\xi\left(\nu \lt{g}(\nabla U(g)) \right)\d g \d \xi\\
        &+\gamma\int_{\G\times \g}f\divg_\xi (\nu\xi)+f\Delta_\xi \nu \d g \d \xi
    \end{align*}
    Here $\lte{g}\xi$ stands for the left-invariant vector filed on $\G$ generated by $\xi$. As a result, we have 
    \begin{align*}
        \mathcal{L}^*\nu&=-\divg_g (\nu \lte{g}\xi)-\divg_\xi\left(\nu\ad^*_\xi \xi\right)+\divg_\xi\left(\nu \lt{g}(\nabla U(g)) \right)+\gamma\left(\divg_\xi (\nu\xi)+\Delta_\xi \nu\right)\\
        &=-\ip{\nabla_g \nu}{\lte{g}\xi}-\ip{\nabla_\xi \nu}{\ad^*_\xi \xi}+\ip{\nabla_\xi \nu}{\lt{g}(\nabla U(g))}+\gamma\divg_\xi (\nu\xi+\nabla_\xi \nu)\\
        &-\nu\divg_g(\lte{g}\xi)-\nu\divg_\xi\left(\ad^*_\xi \xi\right)
    \end{align*}
    We emphasize that the divergence of the left-invariant vector does not necessarily vanish, and Lemma \ref{lemma_support_for_invariant_distribution} shows $\divg_g(\lte{g}\xi)+\divg_\xi\left(\ad^*_\xi \xi\right)=0$, which means the term $\ad^*_\xi \xi$ is necessary to cancel with the divergence of left-invariant vector field and ensure the invariant distribution is correct.
    \begin{align*}
        \mathcal{L}^*\nu&=-\ip{\nabla_g \nu}{\lte{g}\xi}-\ip{\nabla_\xi \nu}{\ad^*_\xi \xi}+\ip{\nabla_\xi \nu}{\lt{g}\nabla U(g)}+\gamma\divg_\xi (\nu\xi+\nabla_\xi \nu)
    \end{align*}
    The last step is verifying that $\nu_*$ given in Eq. \eqref{eqn_Gibbs_TG} is a fixed point of $\mathcal{L}^*$. By direct calculation, we have $\nabla_g \nu_*=\nu_*\nabla U$ and $\nabla_\xi \nu_*=-\nu_*\xi$. Together with expression $\mathcal{L}^*$, we have $\mathcal{L}^* \nu_*=0$, which means $\nu_*$ given in Eq. \eqref{eqn_Gibbs_TG} is an invariant distribution.
\end{proof}
The following remark gives the left Haar measure an intuition.
\begin{remark}[Left Haar measure]
\label{rmk_left_Haar_measure}
The base measure $\d g$ we used is called the `left Haar measure'. Roughly speaking, if we have a `measure' at $e$ (rigorously speaking, a volume form at $e$), we can expand it to the whole Lie group by left multiplication, thanks to the group structure. This measure is the left Haar measure, rigorously defined as the measure that is invariant under the pushforward by left multiplication, which is unique up to a constant scaling factor. The reason why it is `left' is because our SDE Eq. \eqref{eqn_sampling_SDE_ad_star_vanish} and later \eqref{eqn_sampling_SDE} depends on left trivialization and our metric on $\G$ is left-invariant.
\end{remark}
\section{Riemannian structure on Lie groups with left-invariant metric}
\subsection{More discussion on Lemma \ref{lemma_ad_self_adjoint}}
\label{sec_about_ad_self_adjoint}
In the beginning of Sec. \ref{sec_inner_product}, we mentioned that Lemma \ref{lemma_ad_self_adjoint} means the Riemannian structure and the group structure are `compatible'. Here we provide more details: it means the exponential map from the group structure and the exponential map from the Riemannian structure are the same.

\begin{definition}[Exponential map (group structure)]
\label{def_exp_group}
    The exponential map $\exp:\g\rightarrow \G$ is given by $\exp(\xi) = \gamma(1)$ where $\gamma: \mathbb R \to \G$ is the unique one-parameter subgroup of $\G$ whose tangent vector at the group identity $e$ is equal to $\xi$. 

\end{definition}

\begin{definition}[Exponential map (Riemannian structure)]
\label{def_exp_geodesic}
    The exponential map $\exp:T_g \G\rightarrow \G$ is given by $\exp_g(\lte{g} \xi) = \gamma(1)$ where $\gamma: \mathbb R \to \G$ is the unique geodesic satisfying $\gamma(0) = g$ with initial tangent vector $\gamma' (0) = \lte{g} \xi$.
\end{definition}

To compare the two exponential maps, we can write down the ODEs characterising these two exponential maps. Starting from $g_0$ with initial direction $\lte{g_0} \xi_0$, the trajectory of two exponential maps $\exp_{g_0}(t\xi_0), t\in[0,1]$ are given by the ODEs with initial condition $g(0)=g_0$ and $\xi(0)=\xi_0$:

The exponential map from the group structure (Def. \ref{def_exp_group}) is given by
\begin{equation*}
    \begin{cases}
        \dot{g}=g\xi\\
        \dot{\xi}=0
    \end{cases}
\end{equation*}

The exponential map from the Riemannian structure (Def. \ref{def_exp_geodesic}) is given by
\begin{equation*}
    \begin{cases}
        \dot{g}=g\xi\\
        \dot{\xi}=\ad^*_\xi \xi
    \end{cases}
\end{equation*}

If we choose the inner product that makes $\ad$ skew-adjoint (Lemma \ref{lemma_ad_self_adjoint}), the two exponential maps are identical by comparing their ODEs. This means the Riemannian structure and the group structure are compatible and will be our default choice in the following. As a result, we will no longer differentiate the two exponential.

\begin{remark}[Explicit expression of the inner product in Lemma \ref{lemma_ad_self_adjoint}]
\label{rmk_inner_product_explicit_expression}
    The condition that $\ad$ is skew adjoint (Lemma \ref{lemma_ad_self_adjoint}) is equal to the requirement that the metric is bi-invariant, i.e., a metric that is both left-invariant and right-invariant. A left-invariant metric is not always bi-invariant because of the non-commutativity of the group structure. However, a connected Lie group admits such a bi-invariant metric if and only if it is isomorphic to the Cartesian product of a compact group and a commutative group \citep{milnor1976curvatures}. On a compact Lie group,  an explicit expression for a bi-invariant metric is
    \begin{equation*}
        \ip{u}{v}=\int_\G\left(\operatorname{Ad}_g(u), \operatorname{Ad}_g(v)\right)\vartheta(\d g)
    \end{equation*}
    where $(\cdot,\cdot)$ is an arbitrary inner product on $\g$ and $\vartheta$ is the \textbf{right} Haar measure. See \cite{milnor1976curvatures, lezcano2019cheap} for more details.
\end{remark}

\subsection{Connection and curvature of Lie groups with left-invariant metric}
\label{sec_Riemannian_structure}
Given left-invariant vector fields $X$ and $Y$, the Levi-Civita connection on Lie groups with a left-invariant metric is given by \citep{milnor1976curvatures}:
\begin{equation*}
    \nabla_X Y=\frac{1}{2}\left([X,Y]-\ad_X^* Y-\ad_Y^* X\right)
\end{equation*}
which leads to the following Christoffel symbols
\begin{equation}
\label{eqn_christoffel_symbol}
    \Gamma_{ij}^k:=\ip{\nabla_{e_i} e_j}{e_k}=\frac{1}{2}\left(c_{ij}^k-c_{ik}^j-c_{jk}^i\right)
\end{equation}
Here $e_i$ stands for the left-invariant vector field generated by $e_i$.

Under the inner product in Lemma \ref{lemma_ad_self_adjoint}, Levi-Civita connection has a simpler expression given by $\nabla_X Y=\frac{1}{2}[X,Y]$, i.e., Levi-Civita connection is half of the Lie derivative. The Christoffel symbols becomes $\Gamma_{ij}^k=\frac{1}{2}c_{ij}^k$.

The Riemannian curvature tensor is defined by $R(X,Y)Z:=\nabla_X\nabla_Y Z - \nabla_Y\nabla_X Z-\nabla_{[X,Y]} Z$. The sectional curvature on a general manifold is defined as $\kappa(X,Y)=\frac{\ip{R(X,Y)Y}{X}}{\ip{X}{X}\ip{Y}{Y}-\ip{X}{Y}^2}$

Under the condition that the operator $\ad^*$ is skew-adjoint (Lemma \ref{lemma_ad_self_adjoint}), and $X,Y$ are orthonormal, the simplified Levi-Civita expression $\nabla_X Y=\frac{1}{2}[X,Y]$ gives us
\begin{equation}
\label{eqn_sectional_curvature}
    \kappa(e_i, e_j)=\frac{1}{4}\norm{\lb{e_i}{e_j}}^2
\end{equation}
Therefore the sectional curvature is non-negative. If we denote $K$ as the upper bound for the absolute value of sectional curvature, i.e., $K=\sup |\kappa|$, and let\begin{equation}
\label{eqn_C}
    C:=\max_{\norm{X}_2=1}\norm{\ad_X}_{op}
\end{equation}
Then Eq. \eqref{eqn_sectional_curvature} leads to $K=\frac{1}{4}C^2$. Note $C=0$ in the flat Euclidean space.

\subsection{About $L$-smoothness on Lie groups}
The commonly used geodesic $L$-smooth on a manifold $\M$ is given by the following definition \citep[e.g., ][Def. 5]{zhang2016first}:
\begin{definition}[Geodesically $L$-smooth]
\label{def_geodesic_L_smooth}
$U : \G \to\mathbb{R}$ is geodesically $L$-smooth if for any $g, \hat \in \M$,
\begin{align*}
    \norm{\nabla U(g)-\Gamma_{\hat{g}}^g \nabla U(\hat{g})}\le L d(g, \hat{g})
\end{align*}
where $\Gamma_{\hat{g}}^g$ is the parallel transport from $\hat{g}$ to $g$.
\end{definition}
The following lemma, quoted from \cite{kong2024quantitative} just for completeness, shows it is equal to the commonly used geodesic $L$-smoothness under the left-invariant metric in Lemma \ref{lemma_ad_self_adjoint}:
\begin{lemma}[Equivalence of trivialized smoothness and geodesic smoothness]
\label{lemma_L_smooth_equivalence}
    Under the assumption of $\ad^*_X$ is skew-adjoint $\forall X\in\g$, Assumption \ref{assumption_L_smooth} is identical to Def. \ref{def_geodesic_L_smooth}.
\end{lemma}
\begin{proof}[Proof of Lemma \ref{lemma_L_smooth_equivalence}, from \cite{kong2024quantitative}]
    For any $g, \hat{g}\in\G$, consider the shortest geodesic $\phi:[0,1]\to \G$ connecting $g$ and $\hat{g}$ and denote $\xi=\lt{g}\nabla U(g)$. Using the condition $\ad$ is skew-adjoint, we have $\dot \phi(t)=0$ and $\lte{\phi(t)} \xi$ is parallel along $\phi$ by checking the condition for parallel transport \citep[Thm. 1]{guigui2021reduced}:
    \begin{align*}
        \frac{\d}{\d t}\xi=0=-\frac{1}{2}\left[\lt{\phi(t)^{-1}} \dot \phi(t),\xi\right]
    \end{align*} 

    This tells that 
    \begin{align*}
        \lt{g}\Gamma_{g}^{\hat{g}} \nabla f(g)=\lt{\hat{g}}\nabla f(\hat{g})
    \end{align*}
    Together with the metric is left-invariant, we have
    \begin{align*}
        \norm{\lt{g}\nabla U(g)-\lt{\hat{g}} \nabla U(\hat{g})}\le L d(g, \hat{g})
    \end{align*}
    which is identical to Assumption \ref{assumption_L_smooth}.
\end{proof}
\subsection{Non-commutativity of Lie groups}
Comparing with the Euclidean space, Lie groups lack of commutativity, i.e., for $g, \hat{g}\in \G$, $g\hat{g}$ and $\hat{g}g$ are not necessarily equal. This can also be characterized by the non-trivial Lie bracket. This non-commutativity leads to the fact that $\exp(X)\exp(Y) \neq \exp(X+Y)$. An explicit expression for $\log(\exp(X)\exp(Y))$ is given by Dynkin's formula \citep{dynkin2000calculation}. In the matrix group case, $\exp$ is matrix exponentiation and multiplication is matrix multiplication, and for matrices $A, B$, we no longer have $\exp(A)\exp(B) = \exp(A+B)$.

We will not use Dynkin's formula directly but only 2 corollaries of it, Cor. \ref{cor_d_exp} and \ref{cor_d_log}. By defining power series
    \begin{equation}
    \label{eqn_p}
        p(x):=\frac{x}{1-\exp(-x)}=\sum_{k=0}^\infty\frac{B_k(1)}{k!}x^k
    \end{equation}
    and its inverse 
    \begin{equation}
    \label{eqn_p_inv}
        p^{-1}(x):=\frac{1-\exp(-x)}{x}=\sum_{k=0}^\infty\frac{(-1)^k}{(k+1)!}x^k
    \end{equation}
    where $B_n$ are Bernoulli polynomials, we have

\begin{corollary}[Differential of exponential]
    \label{cor_d_exp}
    The differential of matrix exponential is given by
    \begin{equation}
        \label{eqn_d_exp}
        \d \exp_X (Y)=\lte{\exp{X}}\left[p^{-1}(\ad_X)Y\right], \quad \forall X, Y\in \g
    \end{equation}
\end{corollary}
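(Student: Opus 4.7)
The plan is to derive the formula by reducing it to a linear ODE on the Lie algebra via left trivialization of a two-parameter family. Define
\[
\gamma(t,s) := \exp\!\bigl(t(X+sY)\bigr), \qquad t,s\in\mathbb{R},
\]
so that $s\mapsto \gamma(1,s)$ is the curve whose $s$-derivative at $s=0$ is exactly $d\exp_X(Y)$, while $t\mapsto \gamma(t,s)$ is the one-parameter subgroup generated by $X+sY$ and therefore satisfies $\partial_t\gamma = T_eL_{\gamma}(X+sY)$. The task then splits into (i) identifying the left-trivialized version of the variation $\partial_s\gamma|_{s=0}$ in $\g$, and (ii) pushing the answer forward by $T_e L_{\exp X}$ at the end.

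To do (i), introduce the curve $V(t)\in\g$ defined by
\[
V(t) := T_{\gamma(t,0)}L_{\gamma(t,0)^{-1}}\!\bigl(\partial_s \gamma(t,s)\big|_{s=0}\bigr), \qquad V(0)=0,
\]
so that $\partial_s\gamma(t,s)|_{s=0} = T_eL_{\exp(tX)} V(t)$. Apply $\partial_s|_{s=0}$ to the identity $\partial_t\gamma = T_eL_{\gamma}(X+sY)$ and use $\partial_s\partial_t\gamma = \partial_t\partial_s\gamma$. Realizing $\G$ as a matrix group via Ado--Peter--Weyl (Thm.~\ref{thm_ado}), the right-hand side reads $\gamma(X+sY)$, so differentiating in $s$ and equating to $\partial_t(\exp(tX)V(t))$ yields, after cancelling $\exp(tX)$,
\[
\dot V(t) + \ad_X V(t) = Y, \qquad V(0)=0.
\]

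Variation of constants then gives
\[
V(1) = \int_0^1 e^{-(1-u)\ad_X}\,du\;Y = \int_0^1 e^{-w\ad_X}\,dw\;Y = \sum_{k=0}^\infty \frac{(-1)^k\,\ad_X^{\,k}}{(k+1)!}\,Y,
\]
which matches the power series for $p^{-1}(\ad_X)$ in Eq.~\eqref{eqn_p_inv}. Untrivializing, $d\exp_X(Y) = T_e L_{\exp X} V(1) = T_e L_{\exp X}\!\bigl[p^{-1}(\ad_X)Y\bigr]$, as claimed.

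The main obstacle is the bookkeeping in the ODE derivation: passing from $\partial_s\partial_t\gamma = \partial_t\partial_s\gamma$ to a constant-coefficient equation in $\g$ hinges on the commutation identity between left translation and the exponential, which produces precisely the $\ad_X V$ term. This is transparent in the matrix-group picture but needs a bit of care intrinsically; appealing to the matrix representation from Thm.~\ref{thm_ado} (available because $\G$ is compact, by Assumption~\ref{assumption_general}) is the cleanest route and avoids redeveloping the identity in coordinate-free language. Once the ODE is in hand, everything else is elementary.
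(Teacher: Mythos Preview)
Your argument is correct: the two-parameter variation $\gamma(t,s)=\exp(t(X+sY))$, left-trivialization of $\partial_s\gamma|_{s=0}$, the resulting linear ODE $\dot V+\ad_X V=Y$ with $V(0)=0$, and the variation-of-constants solution $V(1)=\int_0^1 e^{-w\ad_X}\,dw\,Y=p^{-1}(\ad_X)Y$ are all standard and sound. The appeal to the matrix realization (Thm.~\ref{thm_ado}) to make the $\partial_s\partial_t=\partial_t\partial_s$ step concrete is a legitimate shortcut.

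The paper itself does not spell out a proof here: it simply asserts that Cor.~\ref{cor_d_exp} follows from Dynkin's formula and records the power series $p^{-1}$. Your route is therefore not just equivalent but strictly more self-contained. Deriving the differential of $\exp$ from the full Dynkin/BCH expansion would require differentiating $\log(\exp X\,\exp(sY))$ in $s$ and extracting the linear-in-$Y$ part of the commutator series, which is more work than your direct ODE. Either way one lands on the same operator $p^{-1}(\ad_X)$, but your approach gets there with less machinery.
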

Thm. 2.2 in \cite{brocker2013representations} shows that the group exponential is surjective on compact connected Lie groups. Thus, We can define \textbf{logarithm} $\log_g: \G\rightarrow T_g \G$ as the inverse of an exponential map. Since the exponential map is not injective in general cases, its inverse, logarithm is not uniquely defined. In this paper, we use all the $\log$ to denote the one corresponding to the minimum geodesic, i.e., $d(g, h)=\norm{\log_g h}, \forall g, h$. In the cases that the minimum geodesic is not unique, we use $\log$ to denote any one of them. We omit the subscript when the base point is identity, i.e., $\log:=\log_e$. Note that we do not require the uniqueness of the geodesic connecting 2 points.

We also need to consider the differential of the logarithm. However, since the exponential map is not an injection, the logarithm is not always differentiable. We denote the set that the differential of logarithm does not exist as 
\begin{equation}
\label{eqn_N}
    N:=\left\{g\in \G: \d \log_g \text{ is not well defined}\right\}
\end{equation} 

For $g\in \G\backslash N$, we have the following expression for $\d \log_g$
\begin{corollary}[Differential of logarithm]
    \label{cor_d_log}
     For any $g\notin N$, any $\xi \in \g$, we have the differential of $\log$ is given by
    $\d \log_g: T_g \G\to \g$ is given by
    \begin{align*}
        \d \log_g (\lte{g} \xi)&=p(\ad_{\log_g})\xi
    \end{align*}
\end{corollary}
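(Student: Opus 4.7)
My plan is to obtain the formula for $\d \log_g$ by inverting the formula for $\d \exp$ given in Cor.~\ref{cor_d_exp}. Since $g \notin N$, by definition $\log_g$ is differentiable at $g$, which is equivalent to saying that $\exp$ is a local diffeomorphism at $\log_g$. The inverse function theorem then gives $\d \log_g = (\d \exp_{\log_g})^{-1}$, and the rest is algebraic manipulation of the formal power series $p$ and $p^{-1}$.

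Concretely, fix any tangent vector $v \in T_g \G$ and write it uniquely as $v = T_e L_g \xi$ for $\xi \in \g$, which is possible because $T_e L_g$ is a linear isomorphism. Applying Cor.~\ref{cor_d_exp} with $X = \log_g$ and $Y = \d \log_g (v)$ gives
\begin{equation*}
v \;=\; \d \exp_{\log_g}\bigl(\d \log_g (v)\bigr) \;=\; T_e L_g \bigl[ p^{-1}(\ad_{\log_g})\, \d \log_g (v)\bigr].
\end{equation*}
Composing both sides with $T_g L_{g^{-1}} = (T_e L_g)^{-1}$ yields
$\xi = p^{-1}(\ad_{\log_g})\, \d \log_g (T_e L_g \xi)$. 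It then remains to apply $p(\ad_{\log_g})$ to both sides and use the fact that $p$ and $p^{-1}$ are mutually inverse as analytic functions, so $p(\ad_{\log_g})\, p^{-1}(\ad_{\log_g}) = I$ on $\g$.

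The main subtlety, which I expect to be the only nontrivial point, is justifying that $p(\ad_{\log_g})$ is a well-defined linear operator on $\g$ and that the identity $p \cdot p^{-1} = 1$ lifts to operators. The function $p(z) = z/(1-e^{-z})$ has singularities precisely at $z = 2\pi i k$ for $k \in \mathbb{Z} \setminus \{0\}$, so $p(\ad_{\log_g})$ makes sense (via holomorphic functional calculus on the finite-dimensional operator $\ad_{\log_g}$) exactly when no eigenvalue of $\ad_{\log_g}$ lies in this set. But this is exactly the condition that $\d \exp_{\log_g}$ is invertible, which is the condition $g \notin N$. Hence on the complement of $N$, both $p(\ad_{\log_g})$ and $p^{-1}(\ad_{\log_g})$ are well defined and are mutual inverses, and the displayed identity is legitimate. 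This gives the stated formula $\d \log_g (T_e L_g \xi) = p(\ad_{\log_g})\, \xi$.
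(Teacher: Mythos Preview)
Your proof is correct and is the natural argument: invert the formula for $\d\exp$ from Cor.~\ref{cor_d_exp} via the inverse function theorem, and check that $p(\ad_{\log_g})$ is well defined precisely when $g\notin N$. The paper does not give a separate proof of this corollary, stating only that it (together with Cor.~\ref{cor_d_exp}) follows from Dynkin's formula; your argument makes this derivation explicit and matches the paper's implicit route.
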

The main property of $\d\log$ we are going to use is the following corollary, which is useful later in the proof of Lemma \ref{lemma_martingale_decomposition}.
\begin{corollary}
    \label{cor_log_property}
    Under the choice of inner product in Lemma \ref{lemma_ad_self_adjoint}, for any $g\notin N$ and $\xi \in \g$, we have
    \begin{equation}
    \label{eqn_log_property_1}
        \ip{\d \log_g (\lte{g}\xi)}{\log g}=\ip{\log g}{\xi}
    \end{equation}
    \begin{equation}
    \label{eqn_log_property_2}
        \ip{\d \log_g (\lte{g}\xi)}{\xi}\le \norm{\xi}^2
    \end{equation}
\end{corollary}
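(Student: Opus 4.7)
The plan is to derive both identities by substituting the power-series expression from Corollary~\ref{cor_d_log}, namely $\d\log_g(T_eL_g\xi)=p(\ad_{\log g})\xi=\sum_{k\ge 0}\frac{B_k(1)}{k!}\ad_{\log g}^k\xi$, into the left-hand sides and simplifying term-by-term using the skew-adjointness $\ad^*_X=-\ad_X$ from Lemma~\ref{lemma_ad_self_adjoint}. Since $\g$ is finite-dimensional and the series defining $p(\ad_{\log g})\xi$ converges (as already used in Cor.~\ref{cor_d_log}), passage of the inner product through the infinite sum is routine.

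For Eq.~\eqref{eqn_log_property_1} I would expand
\begin{equation*}
\ip{\d\log_g(T_eL_g\xi)}{\log g}=\sum_{k=0}^\infty\frac{B_k(1)}{k!}\,\ip{\ad_{\log g}^k\xi}{\log g},
\end{equation*}
then transfer each copy of $\ad_{\log g}$ to the right argument, producing $(-1)^k\ip{\xi}{\ad_{\log g}^k(\log g)}$. Because $\ad_{\log g}(\log g)=[\log g,\log g]=0$, every term with $k\ge 1$ vanishes. Only the $k=0$ contribution $B_0(1)\ip{\xi}{\log g}=\ip{\log g}{\xi}$ survives, which is exactly \eqref{eqn_log_property_1}.

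For Eq.~\eqref{eqn_log_property_2} the plan is to split the expansion of $\ip{p(\ad_{\log g})\xi}{\xi}$ by parity of $k$. For odd $k$ the operator $\ad_{\log g}^k$ is skew-adjoint (since $(\ad_{\log g}^k)^*=(-\ad_{\log g})^k=-\ad_{\log g}^k$), hence $\ip{\ad_{\log g}^k\xi}{\xi}=0$. For $k=0$ one recovers $\norm{\xi}^2$. For even $k=2j$ with $j\ge 1$, shifting $j$ factors of $\ad_{\log g}$ to the right via skew-adjointness gives $\ip{\ad_{\log g}^{2j}\xi}{\xi}=(-1)^j\norm{\ad_{\log g}^j\xi}^2$, so
\begin{equation*}
\ip{\d\log_g(T_eL_g\xi)}{\xi}-\norm{\xi}^2=\sum_{j=1}^\infty\frac{(-1)^j B_{2j}(1)}{(2j)!}\,\norm{\ad_{\log g}^j\xi}^2.
\end{equation*}
The final step, and the main obstacle, is a sign argument: using $B_{2j}(1)=B_{2j}$ for $j\ge 1$ together with the classical fact that the Bernoulli numbers satisfy $\operatorname{sgn}(B_{2j})=(-1)^{j+1}$, the coefficient $(-1)^j B_{2j}(1)/(2j)!$ is strictly negative for every $j\ge 1$. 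The entire right-hand side is therefore non-positive, which yields~\eqref{eqn_log_property_2}. Everything else reduces to mechanical use of the Lie bracket identity $[X,X]=0$ and the skew-adjointness of $\ad$.
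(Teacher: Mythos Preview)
Your argument for \eqref{eqn_log_property_1} is essentially identical to the paper's: expand $p(\ad_{\log g})$ as a power series, move factors of $\ad_{\log g}$ across the inner product by skew-adjointness, and kill all $k\ge 1$ terms via $\ad_{\log g}(\log g)=0$.

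For \eqref{eqn_log_property_2} your route is correct but genuinely different from the paper's. The paper argues spectrally: since $\ad_{\log g}$ is skew-adjoint its eigenvalues are $xi$ with $x\in(-2\pi,2\pi)$, and one checks the closed form $\Re[p(xi)]=\frac{x\sin x}{2(1-\cos x)}=\frac{x/2}{\tan(x/2)}\le 1$, which immediately gives $\ip{p(\ad_{\log g})\xi}{\xi}\le\|\xi\|^2$. You instead stay in the power series, drop the odd terms by skew-adjointness, rewrite the even ones as $(-1)^j\|\ad_{\log g}^j\xi\|^2$, and conclude using the classical sign pattern $\operatorname{sgn}(B_{2j})=(-1)^{j+1}$. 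The two arguments are really dual to each other (your termwise inequality is exactly the power-series proof of the paper's bound on $\Re[p(xi)]$), but they buy slightly different things: the spectral version avoids invoking Bernoulli-number signs and makes the role of the eigenvalue restriction $|x|<2\pi$ explicit; your version is more elementary in that it never leaves the real inner-product space or diagonalizes anything. Both rely on the same convergence hypothesis for $p(\ad_{\log g})$ that is built into $g\notin N$.
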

\begin{proof}[Proof of Cor. \ref{cor_log_property}]
    Proof for Eq. \eqref{eqn_log_property_1}: By Cor. \ref{cor_d_log}, we have
    \begin{align*}
        \ip{\d \log_g(\lte{g}\xi)}{\log g}=\ip{p(\ad_{\log_g})\xi}{\log g}
    \end{align*}
    Using the fact that $\ad^*$ is skew-adjoint, $\ip{\left(\ad_{\log_g}\right)^k \xi}{\log_g}= \ip{\left(\ad_{\log_g}\right)^{k-1} \xi}{\ad_{\log_g}^*\log_g} =0$ for any $k\ge 1$. Since the 0-order term in $p$ is 1,
    \begin{align*}
        \ip{\d \log_g(\lte{g}\xi)}{\log g}=\ip{\xi}{\log g}
    \end{align*}

    Proof for Eq. \eqref{eqn_log_property_2}: 
    By Cor. \ref{cor_d_log}, we have
    \begin{align*}
        \ip{\d \log_g(\lte{g}\xi)}{\xi}=\ip{p(\ad_{\log_g})\xi}{\xi}
    \end{align*}
    $\ad_{\log_g(g^*)}$ is a linear map from $\g$ to itself. Since the operator is skew-adjoint, it has all eigenvalues pure imagine or $0$. By the assumption that $p(\ad_{\log_g(g^*)})$ converges, we have all the eigenvalues in the interval $(-2\pi i, -2\pi i)$. Since $\Re[p(x i)]=\frac{x \sin(x)}{2(1-\cos(x))}\le 1$  for $x\in (-2\pi, 2\pi)$, the eigenvalues of $p(\ad_{\log_g(g^*)})$ are smaller or equal to 1, which gives us Eq. \eqref{eqn_log_property_2}.
\end{proof}

By Cor. \ref{cor_d_log}, the set $N$ in Eq. \eqref{eqn_N} is also equal to the set where the power series $p(\ad_{\log_g})$ does not converge. We have the following lemma showing this set is zero-measured under the left Haar measure.
\begin{lemma}
\label{lemma_exp_pushforward_absolute_continuous}
By choosing the inner product in Lemma \ref{lemma_ad_self_adjoint}, the pushforward measure of $\d \xi$ on $\g$ by $\exp$ is absolute continuous w.r.t. $\d g$, i.e.,
\begin{align*}
    \d g\ll \exp_\sharp(\d \xi)
\end{align*}
\end{lemma}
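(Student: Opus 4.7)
The plan is to combine the explicit formula for $\d\exp$ from Cor. \ref{cor_d_exp} with Sard's theorem and the area formula for smooth maps between equidimensional manifolds. I would first introduce the critical set
\[
N_\g := \{X\in\g : \det p^{-1}(\ad_X)=0\}\subseteq\g,
\]
which by Cor. \ref{cor_d_exp} is exactly where $\d\exp_X$ is singular, since $T_eL_{\exp X}$ is always a linear isomorphism. The map $X\mapsto \det p^{-1}(\ad_X)$ is real-analytic in $X$ and equals $1$ at $X=0$ by the power series in Eq. \eqref{eqn_p_inv}, so it is not identically zero; hence $N_\g$ is a proper real-analytic subset of $\g$ with Lebesgue measure zero. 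Sard's theorem applied to the smooth map $\exp:\g\to\G$ then gives $\d\!g(\exp(N_\g))=0$.

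Setting $R_\g:=\g\setminus N_\g$, on which $\exp$ is a local diffeomorphism, I would next apply the area formula to $\exp|_{R_\g}$: for any Borel $A\subseteq\G$,
\[
\int_{\exp^{-1}(A)\cap R_\g} |\det \d\exp_X|\,\d\!\xi(X)=\int_A \#\bigl(\exp^{-1}(g)\cap R_\g\bigr)\,\d\!g(g).
\]
Given $A$ with $\exp_\sharp(\d\!\xi)(A)=\d\!\xi(\exp^{-1}(A))=0$, the left-hand side vanishes since its integrand is supported on a Lebesgue-null subset of $R_\g$ and is pointwise finite there; hence the right-hand side vanishes as well. Because $\exp$ is surjective on compact connected Lie groups (Thm. \ref{thm_ado}), every $g\in\G\setminus\exp(N_\g)$ admits at least one preimage in $R_\g$, so the counting function is $\ge 1$ on $A\setminus\exp(N_\g)$. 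This forces $\d\!g(A\setminus\exp(N_\g))=0$, and combining with $\d\!g(\exp(N_\g))=0$ gives $\d\!g(A)=0$, i.e., $\d\!g\ll \exp_\sharp(\d\!\xi)$.

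The main obstacle is making the area formula rigorous in this non-injective, non-proper setting. Since $\g$ is non-compact while $\G$ is compact, the counting function $g\mapsto \#(\exp^{-1}(g)\cap R_\g)$ can take the value $+\infty$. This does not impede the argument, because the area formula for smooth maps between equidimensional manifolds permits values in $\mathbb{N}\cup\{\infty\}$ on the right-hand side, and we only use the lower bound $\ge 1$; one works locally in open subsets of $R_\g$ on which $\exp$ is a genuine diffeomorphism and then sums. The only other check, that $\det p^{-1}(\ad_X)$ is real-analytic in $X$ and nonzero at $X=0$, is routine from Eq. \eqref{eqn_p_inv} together with the linearity of $X\mapsto \ad_X$.
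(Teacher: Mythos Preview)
Your proof is correct but takes a different and considerably longer route than the paper. The paper's argument is essentially one line: under the inner product of Lemma~\ref{lemma_ad_self_adjoint} the operator $\ad_X$ is skew-adjoint and so has purely imaginary spectrum, whence each eigenvalue of $p^{-1}(\ad_X)$ has modulus $|p^{-1}(ix)|\le 1$ and thus $|\det \d\exp_X|\le 1$ globally; it follows that $\exp$ sends Lebesgue-null subsets of $\g$ to Haar-null subsets of $\G$, and surjectivity of $\exp$ on compact connected $\G$ (which you also invoke) then gives $\d g\ll\exp_\sharp\d\xi$ directly. Your route via Sard's theorem and the area formula never uses this global Jacobian bound, and so in fact does not depend on the specific inner product of Lemma~\ref{lemma_ad_self_adjoint}; you have effectively proved a slightly more general statement, valid for the left Haar measure on any compact connected Lie group. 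The tradeoff is length and machinery: the paper gets there in two sentences by exploiting the spectral consequence of the bi-invariant metric, while you partition into regular and critical sets and invoke the measure-theoretic area formula with possibly infinite multiplicity.
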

\begin{proof}
     The pushforward measure by $\exp:\g\to \G$ is given by $\d g=\det(p^{-1}(\ad_X)) \d \xi$ where $p^{-1}$ is defined in Eq. \ref{eqn_p_inv}. Since we have $\norm{p^{-1}(x i)}\le 1, \forall x\in\mathbb{R}$, we have $\det(p^{-1}(\ad_\xi))$ is upper bounded, which means $\exp$ pushes any zero-measured set in $\g$ to a zero-measured set in $\G$, i.e., $\d g\ll\exp_\sharp(\d \xi)$.
\end{proof}
\begin{lemma}\label{lemma_d_log}
    $p(\ad_{\log g})$ is well defined for $g\in \G$ almost everywhere under measure $\d g$.
\end{lemma}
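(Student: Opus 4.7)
The plan is to reduce the a.e.\ well-definedness on $\G$ to a Lebesgue-null statement on $\g$ via Lemma~\ref{lemma_exp_pushforward_absolute_continuous}, and then to exploit the polynomial nature of the boundary of the convergence region together with the conjugate-point structure of compact Lie groups with bi-invariant metric.

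First, I characterize the convergence set of the series. The Bernoulli series $p(x)=x/(1-e^{-x})$ has radius of convergence $2\pi$, with singularities at $\pm 2\pi i$. By Lemma~\ref{lemma_ad_self_adjoint}, $\ad_\xi$ is skew-adjoint, hence its spectrum lies on the imaginary axis $\{i\lambda_j(\xi)\}_{j=1}^m$ with $\lambda_j(\xi)\in\mathbb{R}$. Thus $p(\ad_\xi)$ converges as soon as $\max_j|\lambda_j(\xi)|<2\pi$, equivalently whenever the spectral radius of $\ad_\xi$ is strictly less than $2\pi$.

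Second, I bound the spectral radius of $\ad_{\log g}$ for $g$ outside the cut locus of $e$. Under the bi-invariant metric the group exponential and Riemannian exponential coincide (Rmk.~\ref{rmk_compare_exp}), and a standard computation of the Jacobi equation along the one-parameter subgroup $\gamma(t)=\exp(t\,\log g/\|\log g\|)$ places the first conjugate point at arclength $t_1=2\pi/\max_j|\lambda_j(\log g/\|\log g\|)|$. Since $\log g$ corresponds to a minimum-length geodesic from $e$ (definition used in this paper), $\|\log g\|\le t_1$, and scaling gives $\max_j|\lambda_j(\log g)|\le 2\pi$, with strict inequality whenever $g$ lies strictly before its first conjugate point along the geodesic. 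The equality case lies in the set
\[
\tilde N:=\{\xi\in\g:\det(\ad_\xi^2+4\pi^2 I)=0\},
\]
which is the zero set of a polynomial in $\xi$ (because $\ad_\xi$ is linear in $\xi$). Evaluating at $\xi=0$ yields $(4\pi^2)^m\neq 0$, so this polynomial is not identically zero and $\tilde N$ has Lebesgue measure zero in $\g$.

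Finally, I combine these ingredients. The set $N$ where $p(\ad_{\log g})$ fails to be well-defined is contained in $\mathrm{Cut}(e)\cup \exp(\tilde N)$: the cut locus handles any ambiguity of $\log g$, and everywhere else the above conjugate-point bound shows divergence forces $\log g\in\tilde N$. The cut locus has Haar measure zero by standard Riemannian geometry on compact manifolds, while Lemma~\ref{lemma_exp_pushforward_absolute_continuous} gives $\d g\ll\exp_\sharp(\d\xi)$, so $\exp(\tilde N)$ is Haar-null because $\tilde N$ is Lebesgue-null. Hence $N$ has Haar measure zero, proving the lemma. The main obstacle, and the step that most carefully uses the Lie group structure, is the conjugate-point bound $\max_j|\lambda_j(\log g)|\le 2\pi$; everything else is a routine measure-theoretic cleanup on top of Lemma~\ref{lemma_exp_pushforward_absolute_continuous}.
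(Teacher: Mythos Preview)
Your argument is correct but takes a genuinely different route from the paper. The paper's proof is purely measure-theoretic: it defines the bad set $P^c=\{\xi\in\g:\,2k\pi i\in\sigma(\ad_\xi)\text{ for some }k\neq 0\}$, observes that along any ray $t\mapsto te$ the eigenvalues scale linearly so only countably many $t$ land in $P^c$, and concludes by a polar-coordinate Fubini argument that $P^c$ is Lebesgue-null; then it pushes forward through $\exp$ using the bounded-Jacobian content of Lemma~\ref{lemma_exp_pushforward_absolute_continuous}. No Riemannian geometry beyond that lemma is invoked.

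Your route instead imports the Jacobi-field computation on a bi-invariant metric to obtain the sharp bound $\max_j|\lambda_j(\log g)|\le 2\pi$. This is a stronger, more geometric statement: it actually shows that the divergence set of $p(\ad_{\log g})$ is contained in $\mathrm{Cut}(e)$, since for $g\notin\mathrm{Cut}(e)$ one has $\|\log g\|$ strictly less than the cut distance, which in turn is at most the first conjugate time $t_1$, forcing the spectral radius to be strictly below $2\pi$. In particular your $\tilde N$ step and the appeal to Lemma~\ref{lemma_exp_pushforward_absolute_continuous} are redundant---the cut-locus bound alone already finishes the proof. The trade-off is that the paper's argument is more elementary (it needs only linearity of $\xi\mapsto\ad_\xi$ and Fubini), while yours yields the cleaner structural conclusion $N\subset\mathrm{Cut}(e)$ at the cost of invoking the conjugate-point calculus on compact Lie groups.
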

\begin{proof}[Proof of Lemma \ref{lemma_d_log}]
    By the expression of $p$ in Eq. \eqref{eqn_p}, we have $p(x)$ diverges when $x\neq 0$ and $\exp(-x)=1$, i.e., $x=2k\pi i$ for $k\neq 0$. As a result, $p(\ad_\xi)$ is well defined if $\xi\in P$,
     \begin{equation*}
         P:=\left\{\xi\in\g: \lambda\neq 2k\pi i, \forall \lambda\in \sigma(\ad_\xi), \forall k\neq 0\right\}
     \end{equation*}
     where $\sigma(\ad_\xi)$ denotes the set of all eigenvalues of the linear operator $\ad_\xi$.

    The Lebesgue measure of $P$ is given by
    \begin{align*}
        &\int_\g \mathbbm{1}_P (\xi) \d \xi\\
        &=\operatorname{Area}(\partial B_1(0))\int_{\partial B_1(0)} \int_0^\infty \mathbbm{1}_P (te) t^{d-1}\d  t \d  e
    \end{align*}
    Since we have the operator $\ad_\xi$ is linear in $\xi$, which gives us that $\sigma(\ad_{t\xi})=t\sigma(\ad_{\xi})$ ($\sigma$ is the set of eigenvalues for operators). Using the assumption that $\g$ is finite dimensional, we have for a fixed $e\in \g$, only countable many $t$ makes $te\in P$, which means $\int_0^\infty \mathbbm{1}_P (te) t^{d-1}\d t=0$, this gives $P$ has zero measure. By Lemma \ref{lemma_exp_pushforward_absolute_continuous}, $\exp(P)$ has measure $0$ under left Haar measure on $\G$.
\end{proof}
This lemma proves that we have $\d\log$ well defined almost everywhere on $G$. Later in our proof of convergence using coupling method, the lemma ensures that we can ignore the pairs $(g, \hat{g})$ that $\d\log \hat{g}^{-1} g$ do not exist when we consider the coupling of 2 trajectories in $\G\times \g$ since a zero-measured set will be negligible when taking expectation. See the proof of Thm. \ref{thm_contractivity_SDE_Wrho}.

\section{Absolute continuity w.r.t. $\d g \d \xi$}
As shown in Lemma \ref{lemma_d_log}, the set that $\d\log$ is not well-defined is zero-measured under the left Haar measure. To rule out this pathological part, we need to show that the density for sampling SDE Eq. \eqref{eqn_sampling_SDE} ($\nu_t$) and the numerical scheme Eq. \eqref{eqn_discrete_splitting} ($\tilde{\nu}_k$) are both absolute continuous w.r.t. $\d g \d \xi$, and then $N$ is also zero-measured under $\nu_t$ and $\tilde{\nu}_k$.

The KL divergence between two probability measures $\nu_1$ and $\nu_2$ is defined by
\begin{align*}
    \KL{\nu_1}{\nu_2}:=\int_{\G\times \g} \ln\left(\frac{\nu_1}{\nu_2}\right) d \nu_1
\end{align*}
A direct conclusion is that $\nu_1$ is absolute continuous w.r.t. $\nu_2$ if $\KL{\nu_1}{\nu_2}<\infty$. Using the data processing inequality, we are going to show $\KL{\nu_t}{\nu_*}< \infty$ and $\KL{\tilde{\nu}_k}{\nu_*}< \infty$, which further induces $\nu_t$ and $\tilde{\nu}_k$ is absolute continuous w.r.t. $\nu_*$ (and also $\d g\d \xi$), respectively.

\begin{lemma}[Data processing inequality]
\label{lemma_data_processing}
For any Markov transitional kernel $P$, and any probability distributions $\nu_1$ and $\nu_2$,
\begin{align*}
    \KL{\nu_1 P}{\nu_2 P}\le \KL{\nu_1}{\nu_2}
\end{align*}
\end{lemma}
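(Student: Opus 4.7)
The plan is to prove the data processing inequality by the standard chain rule argument for KL divergence on a product space, which is arguably the cleanest route and avoids invoking joint convexity directly.

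First, I would introduce two joint distributions on the product space. Let $\pi_i$ be the law of $(X, Y)$ where $X \sim \nu_i$ and $Y \mid X \sim P(X, \cdot)$, for $i = 1, 2$. Then the marginal law of $X$ under $\pi_i$ is $\nu_i$, the marginal law of $Y$ under $\pi_i$ is $\nu_i P$, and the conditional law of $Y$ given $X$ is the same kernel $P(X, \cdot)$ under both $\pi_1$ and $\pi_2$.

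Next I would invoke the chain rule for KL divergence in two different orderings. Decomposing along $X$ first gives
\begin{equation*}
\KL{\pi_1}{\pi_2} = \KL{\nu_1}{\nu_2} + \mathbb{E}_{X \sim \nu_1}\bigl[\KL{P(X, \cdot)}{P(X, \cdot)}\bigr] = \KL{\nu_1}{\nu_2},
\end{equation*}
since the conditional KL term vanishes. Decomposing along $Y$ first gives
\begin{equation*}
\KL{\pi_1}{\pi_2} = \KL{\nu_1 P}{\nu_2 P} + \mathbb{E}_{Y \sim \nu_1 P}\bigl[\KL{\pi_1(\cdot \mid Y)}{\pi_2(\cdot \mid Y)}\bigr].
\end{equation*}
The conditional KL term on the right is nonnegative (it is an average of KL divergences, each of which is nonnegative by Gibbs' inequality), so combining the two identities yields $\KL{\nu_1 P}{\nu_2 P} \le \KL{\nu_1}{\nu_2}$.

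The only mildly delicate point, and probably the main obstacle to write carefully, is justifying the chain rule in the measure-theoretic generality where $\pi_i$ may not admit densities with respect to a common product reference measure. One can sidestep this by first reducing to the case $\KL{\nu_1}{\nu_2} < \infty$ (otherwise the inequality is trivial), which forces $\nu_1 \ll \nu_2$, and then using the standard disintegration argument (or equivalently the variational formula $\KL{\mu}{\nu} = \sup_{\varphi} \{\mathbb{E}_\mu \varphi - \log \mathbb{E}_\nu e^{\varphi}\}$) to establish the additive decomposition. After that the argument above is purely algebraic.
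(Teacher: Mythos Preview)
Your argument via the chain rule for KL divergence is correct and is the standard textbook proof of the data processing inequality. The paper itself does not supply a proof of this lemma; it is stated as a well-known fact and used immediately to deduce the subsequent corollary on absolute continuity. So there is nothing to compare against, and your proposal is a perfectly acceptable justification. The caveat you flag about the measure-theoretic generality of the chain rule is real but routine, and your suggested reduction to the case $\KL{\nu_1}{\nu_2}<\infty$ together with disintegration (or the Donsker--Varadhan variational formula) handles it.
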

\begin{corollary}
\label{cor_absolute_continuous}
    Given initial condition $(g_0, \xi_0)$ is absolute continuous w.r.t. $\d g \d \xi$,  $(g_t, \xi_t)$ (evolution of SDE Eq. \eqref{eqn_sampling_SDE}) and $(\tilde{g}_k, \tilde{\xi}_k)$ (evolution of numerical scheme Eq. \eqref{eqn_discrete_splitting} with step size $h$) are both absolute continuous w.r.t. $\d g \d \xi$ for any $t\ge 0$.
\end{corollary}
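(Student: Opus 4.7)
The plan is to observe that the Gibbs distribution $\nu_*$ in \eqref{eqn_Gibbs_TG} has positive density with respect to $\d\!g\d\!\xi$ everywhere on $\G\times\g$, so $\nu_*$ and $\d\!g\d\!\xi$ are mutually absolutely continuous. Hence the assumption $\nu_0\ll \d\!g\d\!\xi$ is equivalent to $\nu_0\ll \nu_*$, and it suffices to show that both evolutions preserve absolute continuity with respect to $\nu_*$ (or with respect to $\d\!g\d\!\xi$, whichever is more convenient). I would handle the two cases (SDE and one step of the splitting scheme) separately, as the mechanisms are quite different.

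For the SDE, the cleanest route is to invoke Lemma \ref{lemma_data_processing} together with Thm.~\ref{thm_invariant_distribution_Gibbs}. First, approximate $\nu_0$ by a sequence $\nu_0^{(n)}$ obtained by truncating the Radon--Nikodym derivative $\d\!\nu_0/\d\!\nu_*$ at $n$ and renormalizing; each $\nu_0^{(n)}$ has bounded density with respect to $\nu_*$ and hence finite KL. Denote the SDE transition kernel by $P_t$; since $\nu_*P_t=\nu_*$, the data processing inequality gives $\KL{\nu_0^{(n)}P_t}{\nu_*}\le \KL{\nu_0^{(n)}}{\nu_*}<\infty$, which implies $\nu_0^{(n)}P_t\ll\nu_*$. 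Writing $\nu_0=\sum_n(\nu_0^{(n+1)}-\nu_0^{(n)})$ (up to a limit argument) and using linearity of $P_t$, $\nu_t=\nu_0P_t$ is also absolutely continuous with respect to $\nu_*$, and therefore with respect to $\d\!g\d\!\xi$. (An equivalent, purely measure-theoretic statement of the same fact is that any Markov kernel preserves absolute continuity with respect to its invariant measure: if $\nu_*(A)=0$ then $P_t(x,A)=0$ for $\nu_*$-a.e.\ $x$, hence $\nu_0$-a.e.\ $x$ when $\nu_0\ll\nu_*$, and so $(\nu_0P_t)(A)=0$; one can use whichever formulation is preferred.)

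For the one-step splitting scheme \eqref{eqn_discrete_splitting}, I would argue by a direct change of variables. Conditionally on $(g_0,\xi_0)$, the variable $\tilde\xi_h$ is the sum of a deterministic term and a nondegenerate Gaussian on $\g$, so the conditional law of $\tilde\xi_h$ given $(g_0,\xi_0)$ has a smooth density with respect to $\d\!\xi$; integrating against the $g_0$-marginal of $\nu_0$, the joint law of $(g_0,\tilde\xi_h)$ is absolutely continuous with respect to $\d\!g\d\!\xi$. The update then sets $\tilde g_h=g_0\exp(h\tilde\xi_h)$, and the map $(g_0,\xi)\mapsto (g_0\exp(h\xi),\xi)$ is a smooth diffeomorphism of $\G\times\g$ with inverse $(g,\xi)\mapsto(g\exp(-h\xi),\xi)$; its pushforward of an absolutely continuous measure is absolutely continuous, so $(\tilde g_h,\tilde\xi_h)$ has a density with respect to $\d\!g\d\!\xi$. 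Iterating then shows $\tilde\nu_{kh}\ll\d\!g\d\!\xi$ for all $k$.

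The main technical point to be careful about is the discrete case: naively the update looks concentrated on the graph $\{(g_0\exp(h\xi),\xi)\}$, which is a half-dimensional submanifold, so one might incorrectly conclude the joint law is singular. The resolution is that one must first absorb the Gaussian noise into $\tilde\xi_h$ to get absolute continuity of $(g_0,\tilde\xi_h)$, and only then transport via the diffeomorphism; this requires using that the covariance $1-\exp(-2\gamma h)$ is strictly positive for $h>0$ and that the Gaussian noise is nondegenerate on all of $\g$ (which is already built into the definition of $\d W_t$ in the preliminaries). For the SDE part the only mild subtlety is handling initial data with infinite KL against $\nu_*$, which the truncation-plus-DPI argument above resolves without requiring hypoellipticity of the generator.
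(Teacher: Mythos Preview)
Your proposal is correct. For the continuous SDE you follow the paper's intended route---the data processing inequality (Lemma~\ref{lemma_data_processing}) applied with $\nu_2=\nu_*$, which is invariant under $P_t$---and you add a truncation step to handle the case $\KL{\nu_0}{\nu_*}=\infty$. The paper's sketch (``we are going to show $\KL{\nu_t}{\nu_*}<\infty$'') implicitly assumes finite initial KL, whereas the corollary only hypothesizes absolute continuity, so your fix (or the equivalent purely measure-theoretic version you give) is genuinely needed to bridge that gap.

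For the discrete scheme your approach differs from what the paper suggests. The paper positions the corollary as a consequence of DPI, but $\nu_*$ is not invariant for the splitting kernel $\tilde P_h$, so DPI with $\nu_2=\nu_*$ only yields $\nu_0\tilde P_h\ll\nu_*\tilde P_h$ and one still needs $\nu_*\tilde P_h\ll\d\!g\d\!\xi$---the same question for a particular initial law. Your direct Gaussian-plus-diffeomorphism argument avoids this circularity and is essentially what the paper later sketches in Remark~\ref{rmk_absolute_continuity_initialization} for a Dirac initialization. A minor simplification: you do not actually need the Gaussian noise for this corollary. Conditional on each noise realization, the OU step $(g_0,\xi_0)\mapsto(g_0,\,e^{-\gamma h}\xi_0+c(g_0))$ is already a diffeomorphism of $\G\times\g$, as is the position step $(g,\xi)\mapsto(g\exp(h\xi),\xi)$; absolute continuity is therefore preserved pathwise, and averaging over the noise is immediate.
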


Although Cor. \ref{cor_absolute_continuous} requires the absolute continuity of $(g_0, \xi_0)$ to ensure the density $(\tilde{g}_k, \tilde{\xi}_k)$ is absolutely continuous in later steps, this can be challenging in practice and we do not require our initialization to be absolute continuous w.r.t. $\d g\d \xi$ in our proposed Algo. \ref{algo_sampling_Lie_group} for the following reason:
\begin{remark}[About absolute continuity of initialization]
\label{rmk_absolute_continuity_initialization}
    In our implementation of Algo. \ref{algo_sampling_Lie_group}, we use the simplest initialization that $g_0$ is an arbitrary point and $\xi_0=0$ but did not require the absolute continuouity as in the proof of convergence rate in Thm. \ref{thm_global_error_W2}. This is because the absolute continuity will be automatic after several iterations: In the first iteration, we first update $\xi_0$ by Eq. \eqref{eqn_splitting_SDE_xi}, which is an Ornstein–Uhlenbeck process and at time $h$, the density of $\tilde{\xi}_1$ is absolute continuous w.r.t. $\d \xi$. As a result, since $g_1=g_0\exp(h\xi_1)$, we have the density of $\tilde{g}_1$ is densities absolute continuous w.r.t. $\d g$ (Lemma \ref{lemma_exp_pushforward_absolute_continuous}). In the second iteration, since noise is added to $\tilde{\xi}_1$ again, we have the joint distribution $(g_1, \xi_2)$ is already absolute continuous w.r.t. $\d g\d \xi$. And by the data processing inequality (Lemma \ref{lemma_data_processing}), we have from then on, the density will always be absolute continuous w.r.t. $\d g\d \xi$. As a result, the requirement for initialization does not matter in practice and will be satisfied in two iterations automatically.
\end{remark}

\section{Proof of Thm. \ref{thm_contractivity_SDE_Wrho}}

\subsection{Semi-martingale decomposition}
\label{sec_semi_martingale_decomposition}
In this section, we use the following shorthand notation
\begin{align*}
&r_t:=r((g_t,\xi_t),(\hat{g}_t ,\hat{\xi}_t))\\
&\rho_t:=\rho ((g_t,\xi_t),(\hat{g}_t ,\hat{\xi}_t))\\
&G_t:=G(\xi_t, \hat{\xi}_t)\\
&\delta_t:=\delta((g_t,\xi_t),(\hat{g}_t ,\hat{\xi}_t))\\
&K_t:=K((g_t,\xi_t),(\hat{g}_t ,\hat{\xi}_t))
\end{align*}
where $(g_t,\xi_t)$ and $(\hat{g}_t ,\hat{\xi}_t)$ are coupled using reflection coupling in Sec. \ref{sec_coupling}.    

The function $K:(\G\times\g)^2\to \mathbb{R}$ is defined by
\begin{align*}
    K&:= \left[c f(r)+\left(\alpha\gamma\norm{Q}-\theta L\gamma^{-1}\norm{Z}+\mathbbm{1}_{\{\hat{g}_t^{-1} g_t\in N\}}\delta\right)  f'_{-}(r)+4\gamma^{-1} \rcfunc(Z,\mu)^2 f''(r)\right]\left(1+\beta\norm{\mu}^2\right)\\
    &+\beta f(r)\left[-2\gamma\norm{\mu}^2+2L\norm{\mu}\norm{Z}+8\gamma \rcfunc(Z,\mu)^2\norm{\mu}^2\right]\\
    &+16\beta\norm{\mu} \rcfunc(Z,\mu)^2 f'_{-}(r)
\end{align*}
where $\delta:(\G\times\g)^2\to \mathbb{R}$ is defined by $\delta:=(\alpha+2)\norm{\mu}+\alpha\gamma\norm{Z}-\alpha\gamma\norm{Q}$ and $\theta\in \mathbb{R}$ is defined by $\theta:=\alpha\gamma^2L^{-1}-1$. This definition for $K$ comes from the calculation of semi-martingale decomposition in Lemma \ref{lemma_martingale_decomposition}.

\begin{proof}[Proof of Lemma \ref{lemma_martingale_decomposition}]
    \textbf{Bound the time derivative for $\norm{Z_t}$: }
    For $\hat{g}_t^{-1} g_t\notin N$, the paths of the process $(Z_t)$ are almost surely continuously differentiable with derivative
    \begin{align*}
        \frac{\d}{\d t}Z_t&=\d \log_{\hat{g}_t^{-1} g_t} \frac{\d}{\d t}g -\d \log_{\hat{g}_t^{-1} g_t} \frac{\d}{\d t} \hat{g}\\
        &=\d \log_{\hat{g}_t^{-1} g_t} (\xi_t-\hat{\xi}_t)
    \end{align*}
    Using the property of $\d\log$ in Cor. \ref{cor_log_property},
    \begin{align*}
        \frac{\d}{\d t}\norm{Z_t}&=\frac{1}{\norm{Z_t}}\ip{Z_t}{\frac{\d}{\d t} Z_t}=\frac{1}{\norm{Z_t}}\ip{Z_t}{\d \log_{\hat{g}^{-1} g} (\xi_t-\hat{\xi}_t)}\\
        &=\frac{1}{\norm{Z_t}}\ip{Z_t}{\mu_t}=\frac{1}{\norm{Z_t}}\ip{Z_t}{-\gamma Z_t+\gamma Q_t}\\
        &\le -\gamma\norm{Z_t}+\gamma\norm{Q_t}
    \end{align*}
    When $\hat{g}_t^{-1} g_t\in N$, we won't have $\frac{\d}{\d t}Z_t$. However, a bound for $\frac{\d}{\d t}\norm{Z_t}$ can still be derived by the triangle inequality:
    \begin{align*}
        \frac{\d}{\d t}\norm{Z_t}\le \norm{\mu_t}
    \end{align*}
    In summary, \begin{align*}
        \frac{\d}{\d t}\norm{Z_t}\le -\gamma\norm{Z_t}+\gamma\norm{Q_t}+\mathbbm{1}_{\{\hat{g}_t^{-1} g_t\in N\}} \left(\norm{\mu_t}+\gamma\norm{Z_t}-\gamma\norm{Q_t}\right)
    \end{align*}
    \textbf{Semimartingale decomposition for $\norm{Q_t}$: }
    Suppose $\norm{Q_t}$ satisfies the following inequality almost surely
    \begin{equation*}
        \norm{Q_t}\le \norm{Q_0} + A_t^Q + \tilde{M}_t^Q\qquad\text{for all } t\ge 0,
    \end{equation*}
    where $(A_t^Q)$ and $(\tilde{M}_t^Q)$ are the absolutely continuous process and the martingale to be determined.

    By our choice of coupling in Sec. \ref{sec_coupling}, $\mu_t$ follows the SDE
    \begin{equation}
    \label{eqn_d_mu}
    \d\mu_t = -\gamma \mu_t \d t-\left(\lt{g_t}\nabla U(g_t)-\lt{\hat{g}_t}\nabla
    U(\hat{g}_t)\right)\d t+\sqrt{8\gamma} \rcfunc(Z_t,\mu_t)e_te_t^\top \d W_t^{\rcfunc}
    \end{equation}
    When $\hat{g}_t^{-1} g_t\in N$, by $\frac{\d}{\d t}\norm{Z_t}\le \norm{\mu_t}\d t$, we have 
    \begin{align*}
        \frac{\d}{\d t}A_t^Q \le 2\norm{\mu_t} \d t+ \gamma^{-1}  \norm{\lt{g_t}\nabla U(g_t)-\lt{\hat{g}_t}\nabla U(\hat{g}_t)}
    \end{align*}
    
    When $\hat{g}_t^{-1} g_t\notin N$, by the fact that $Q=Z+\gamma^{-1}\mu$, The process $(Q_t)$ satisfies the following SDE:
    \begin{equation*}
        \d Q_t=d Z_t-\mu_t \d t- \gamma^{-1}  \left(\lt{g_t}\nabla U(g_t)-\lt{\hat{g}_t}\nabla U(\hat{g}_t)\right) \d t+\sqrt{8\gamma^{-1}} \rcfunc(Z_t,\mu_t)e_te_t^\top \d W_t^{\rcfunc}.
    \end{equation*}
    Notice that by \eqref{eqn_rc},
    the noise coefficient vanishes if $Q_t=0$. Cor. \ref{cor_log_property} gives us
    \begin{align*}
        \ip{\frac{\d}{\d t} Z_t-\mu_t}{Q_t}&=\ip{\d \log_{\hat{g}_t^{-1} g_t} \mu_t-\mu_t}{\log_{\hat{g}_t^{-1} g_t}+\gamma^{-1}\mu_t}\\
        &=\ip{\d \log_{\hat{g}_t^{-1} g_t} \mu_t}{\log_{\hat{g}_t^{-1} g_t}}+\ip{\d \log_{\hat{g}_t^{-1} g_t} \mu_t}{\gamma^{-1}\mu_t}-\ip{\mu_t}{\log_{\hat{g}_t^{-1} g_t}}-\ip{\mu_t}{\gamma^{-1}\mu_t}\\
        &\le 0
    \end{align*}
    and we have in this case
    \begin{align*}
        \frac{\d}{\d t}A_t^Q \le \gamma^{-1} \norm{\lt{g_s}\nabla U(g_s)-\lt{\hat{g}_s}\nabla U(\hat{g}_s)}
    \end{align*}
    
    Therefore, combining the 2 cases and apply It\^o's formula to get
    \begin{align*}
    A_t^Q &=\gamma^{-1} \int_0^t\norm{\lt{g_s}\nabla U(g_s)-\lt{\hat{g}_s}\nabla U(\hat{g}_s)} +2\mathbbm{1}_{\{\hat{g}_s^{-1} g_s\in N\}}\norm{\mu_s} \d s\\
    \tilde{M}_t^Q &=\sqrt{8\gamma^{-1}} \int_0^t \rcfunc(Z_s,\mu_s) e_s^\top \d W_s^{\rcfunc}
    \end{align*}

    Notice that there is no It\^o correction, because $\partial^2_{q/\norm{q}} \norm{q}=0$ for $q\not=0$ in both cases and the noise coefficient vanishes for $Q_t=0$. 

    By the $L$-smoothness of $U$, we have
    \begin{align*}
        \frac{\d}{\d t}A_t^Q\le L\gamma^{-1}\norm{Z_t}+2\mathbbm{1}_{\{\hat{g}_s^{-1} g_s\in N\}}\norm{\mu_s}
    \end{align*}

    \textbf{Semimartingale decomposition for $f(r_t)$: }
    By the definition of $r_t$ in Eq. \eqref{eqn_r}
    \begin{align*}
    r_t :=\alpha\norm{Z_t}+\norm{Q_t}= \norm{Q_0} + \alpha \norm{Z_t} + A_t^Q  +\tilde{M}_t^Q
    \end{align*}
    where $t\mapsto \alpha \norm{Z_t} + A_t^Q$ is almost surely absolutely continuous
    with time derivative upper bounded by
    \begin{align*}
    \frac{\d}{\d t} (\alpha \norm{Z_t} + A_t^Q) &\le -(\alpha\gamma-L\gamma^{-1})\norm{Z_t}+\alpha\gamma\norm{Q_t}+\mathbbm{1}_{\{\hat{g}^{-1} g\in N\}}\delta_t\\
    &=-\theta L\gamma^{-1}\norm{Z_t}+\alpha\gamma\norm{Q_t}+\mathbbm{1}_{\{\hat{g}_t^{-1} g_t\in N\}}\delta_t
    \end{align*}
    Since by assumption, $f$ is concave and $C^2$, we can now apply the Ito-Tanaka formula to $f(r_t)$ and obtain a semi-martingale decomposition
    \begin{align*}
        e^{ct}f(r_t) \le f(r_0) + \tilde{A}_t^f +  \tilde{M}_t^f
    \end{align*}
    with the absolute continuous process $(\tilde{A}_t)$ and the martingale $\tilde{M}_t^f$ given by
    \begin{align*}
        \d\tilde{A}_t^f &= e^{ct}\left(c f(r_t) +\left(\alpha\gamma\norm{Q_t}-\theta L\gamma^{-1}\norm{Z_t}+\mathbbm{1}_{\{\hat{g}_t^{-1} g_t\in N\}}\delta_t \right)  f'_{-}(r_t)+4\gamma^{-1} \rcfunc(Z_t,\mu_t)^2 f''(r_t) \right)  dt\\
        \tilde{M}_t^f&=\sqrt{8\gamma^{-1}} \int_0^t e^{cs} f'_{-}(r_s)
        \rcfunc(Z_s,W_s) e_s^\top dW_s^{\rcfunc}
    \end{align*}
    
\textbf{Semimartingale decomposition for $G_t$: }
By the SDE for $\mu_t$ in Eq. \eqref{eqn_d_mu}, the SDE for $\mu_t$ gives the decomposition for $G_t$:
    \begin{align*}
        G_t=G_0 + A_t^G + \tilde{M}_t^G\qquad\text{for all } t\ge 0,
    \end{align*}
    where $(A_t^G)$ and $(\tilde{M}_t^G)$ are the absolutely continuous process and
    the martingale given by
    \begin{align*}
    &A_t^\mu =\int_0^t -2\gamma\norm{\mu_s}^2-2\mu_s^\top\cdot\left(\lt{g_s}\nabla U(g_s)-\lt{\hat{g}_s}\nabla U(\hat{g}_s)\right) +8\gamma \ip{\mu_s}{e_s}^2 \rcfunc(Z_s,\mu_s)^2 ds, \\
    &\tilde{M}_t^\mu = \sqrt{32\gamma} \int_0^t \ip{\mu_s}{e_s} \rcfunc(Z_s,\mu_s) e_s^\top dW_s^{\rcfunc}.
    \end{align*}
    Using the $L$-smoothness of $U$, we have
    \begin{align*}
        A_t^\mu\le -2\gamma\norm{\mu_t}^2+2L\norm{Z_t}\norm{\mu_t}+8\gamma \rcfunc(Z_t,\mu_t)^2 \norm{\mu_t}^2 
    \end{align*}
    \textbf{Semimartingale decomposition for $e^{ct}\rho_t$: }
    We combine the semimartingale decomposition for $e^{ct} f(r_t)$ and $\norm{\mu_t}$ to have the following semimartingale decomposition for $e^{ct}\rho_t$
    \begin{align*}
        e^{ct}\rho_t= e^{ct}f(r_t)\left(1+\beta\norm{\mu_t}^2\right)=\rho_0+M_t+A_t
    \end{align*}
    where the continuous process $A_t$ satisfies $\frac{\d}{\d t}A_t\le  e^{ct} K_t$.
    \begin{align*}
        K_t&= \left[c f(r_t)+\left(\alpha\gamma\norm{Q_t}-\theta L\gamma^{-1}\norm{Z_t}+\mathbbm{1}_{\{\hat{g}_t^{-1} g_t\in N\}}\delta_t\right)  f'_{-}(r_t)+4\gamma^{-1} \rcfunc(Z_t,\mu_t)^2 f''(r_t)\right]\left(1+\beta\norm{\mu_t}^2\right)\\
        &+\beta f(r_t)\left[-2\gamma\norm{\mu_t}^2+2L\norm{\mu_t}\norm{Z_t}+8\gamma \rcfunc(Z_t,\mu_t)^2\norm{\mu_t}^2\right]\\
        &+16\beta\norm{\mu_t} \rcfunc(Z_t,\mu_t)^2 f'_{-}(r_t)
    \end{align*}
    This gives us the definition for $K:(\G\times\g)^2\to \mathbb{R}$ at the beginning of Sec. \ref{sec_semi_martingale_decomposition}
    \end{proof}

\subsection{Conditions for contractivity}
\label{sec_conditions_for_contractivity}
In the following lemma, we find the sufficient condition for $f$, $\eta$ ($\alpha$), $\beta$, $R$ to ensure $K_t\le 0$. Note that we ignore the part $\mathbbm{1}_{\{\hat{g}^{-1} g\in N\}}\delta_t$ because we has shown it has zero measure at any time (Cor. \ref{cor_absolute_continuous}) and will not affect the contraction under $W_\rho$ in Thm. \ref{thm_contractivity_SDE_Wrho} later.

\begin{lemma}
\label{lemma_conditions}
    \begin{align*}
        K_t\le \epsilon\alpha\gamma  \left(1+\beta \norm{\mu_t}^2\right)
    \end{align*}
    when $\hat{g}^{-1} g\notin N$ if the following conditions Eq. \eqref{eqn_cond_K1}, \eqref{eqn_cond_K4}, \eqref{eqn_cond_K23}are satisfied:
    \begin{equation}
    \label{eqn_cond_K1}
        A_0 f(r)+A_1 r f'_{-}(r)+ A_2 f''(r)\le 0, \quad \forall r\in[0, R]
    \end{equation}
    for
    \begin{equation}
    \label{eqn_A012}
    \begin{cases}
        A_0:=c+\beta \left[6\gamma^3 R^2+2\gamma LRD\right]\\
        A_1:= \alpha \gamma+16\beta\gamma \max\{1, \alpha^{-1}\}\\
        A_2:=4\gamma^{-1}
    \end{cases}
    \end{equation}

\begin{equation}
\label{eqn_cond_K23}
    c -\beta(2\gamma-c-2LD\gamma^{-1}R^{-1})\gamma^2R^2\le 0
\end{equation}

\begin{equation}
\label{eqn_cond_K4}
     \frac{\theta}{1+\theta} \gamma rf'_{-}(r)\ge A_0 f(r), \quad \forall r\in[0, R]
\end{equation}

\end{lemma}
\begin{proof}    
    We prove this by dividing $(\G\times \g)^2$ in to 3 regions (Fig. \ref{fig_coupling_region}).
    
    \textbf{Region I: } $\norm{\mu}\le \gamma R$ and $\norm{Q}\ge\epsilon$ (Reflection coupling) (Corresponding to Condition Eq. \eqref{eqn_cond_K1})
    
    $\rcfunc\equiv 1$ in this case and we have
    \begin{align*}
        K_t&= \left[c f(r_t)+L\gamma^{-1}\left((1+\theta)\norm{Q_t}-\theta \norm{Z_t}\right)  f'_{-}(r_t)+4\gamma^{-1} f''(r_t)\right]\left(1+\beta\norm{\mu_t}^2\right)\\
        &+\beta f(r_t)\left[-2\gamma\norm{\mu_t}^2+2L\norm{\mu_t}\norm{Z_t}+8\gamma \norm{\mu_t}^2\right]+16\beta\norm{\mu_t} f'_{-}(r_t)\\
        &\le \left(c +\beta \left[2L\norm{\mu_t}\norm{Z_t}+6\gamma \norm{\mu_t}^2\right]f(r_t)\right)\left(1+\beta\norm{\mu_t}^2\right)\\
        &+ \left(L\gamma^{-1}\left((1+\theta)\norm{Q_t}-\theta \norm{Z_t}\right)  +16\beta\norm{\mu_t}\right)f'_{-}(r_t)\left(1+\beta\norm{\mu_t}^2\right)\\
        &+4\gamma^{-1} f''(r_t)\left(1+\beta\norm{\mu_t}^2\right)
    \end{align*}
    $\norm{Z}\le D$ and $\norm{\mu}\le \gamma R$ give us
    \begin{align*}
        c +\beta \left[2L\norm{\mu_t}\norm{Z_t}+6\gamma \norm{\mu_t}^2\right]\le c+\beta \left[2L\gamma RD+6\gamma (\gamma R)^2\right]
    \end{align*}
    By the fact $\norm{\mu}\le \gamma \max\{1, \alpha^{-1}\}r$ and $\norm{Q}\le r$
    \begin{align*}
        &L\gamma^{-1}\left((1+\theta)\norm{Q}-\theta \norm{Z}\right)+16\beta\norm{\mu} \\
        &\le L\gamma^{-1}\norm{Q} +16\beta\norm{\mu}\\
        &\le L\gamma^{-1} r+16\beta\gamma \max\{1, \alpha^{-1}\}r
    \end{align*}
    As a result, by setting $A_0$, $A_1$, $A_2$ as in Eq. \eqref{eqn_A012}, we have 
    \begin{align*}
        K_t \le \left[A_0 f(r)+A_1 r_t f'_{-}(r_t)+ A_2 f''(r_t)\right]\left(1+\beta\norm{\mu}^2\right)
    \end{align*}
    When Eq. \eqref{eqn_cond_K1} is satisfied, $K_t\le 0$ in this region.

    \textbf{Region II \& III: } $\norm{\mu}\ge \gamma R+\epsilon$ (Synchronous coupling) (Corresponding to Condition Eq. \eqref{eqn_cond_K23})
    
    In the case, we have $r\ge R$, which means $f'_{-}(r)=f''(r)=0$.
    \begin{align*}
        K_t&=c f(r_t)\left(1+\beta\norm{\mu_t}^2\right)+\beta f(r_t)\left[-2\gamma\norm{\mu_t}^2+2L\norm{\mu_t}\norm{Z_t}\right]\\
        &\le f(r_t)\left[c -\beta(2\gamma-c-2LD\gamma^{-1}R^{-1})\norm{\mu_t}^2\right]
    \end{align*}
    which is non-negative given condition Eq. \eqref{eqn_cond_K23}. 
    
    \textbf{Region IV: } $\norm{\mu}\le \gamma R$ and $\norm{Q}<\epsilon$ or $\gamma R<\norm{\mu}< \gamma R+\epsilon$ (Mix between synchronous and reflection coupling) (Corresponding to Condition Eq. \eqref{eqn_cond_K4})

    $0\le \rcfunc\le 1$ in this case and we have
    \begin{align*}
        K_t&\le \rcfunc(Z_t, \mu_t)^2\left[A_0 f(r_t)+A_1 r_t f'_{-}(r_t)+ A_2 f''(r_t)\right]\left(1+\beta\norm{\mu_t}^2\right)\\
        &+(1-\rcfunc(Z_t, \mu_t)^2)\left[\left(c-2\beta\gamma\norm{\mu_t}^2+2\beta L\norm{\mu_t}\norm{Z_t}\right) f(r_t)+(\alpha\gamma\norm{Q_t}-\theta L\gamma^{-1}\norm{Z_t})  f'_{-}(r_t)\right]\left(1+\beta\norm{\mu_t}^2\right)\\
        &\le \left[\left(c-2\beta\gamma\norm{\mu_t}^2+2\beta L\norm{\mu_t}\norm{Z_t}\right) f(r_t)+(\alpha\gamma\norm{Q_t}-\theta L\gamma^{-1}\norm{Z_t})  f'_{-}(r_t)\right]\left(1+\beta\norm{\mu_t}^2\right)\\
        &\le \left[A_0 f(r_t)+(\alpha\gamma\epsilon-\theta L\gamma^{-1}\norm{Z_t})  f'_{-}(r_t)\right]\left(1+\beta\norm{\mu_t}^2\right)\\
        &\le \left[A_0 f(r_t)-\alpha^{-1}\theta L\gamma^{-1}r_t  f'_{-}(r_t)\right]\left(1+\beta\norm{\mu_t}^2\right)+\alpha\gamma\epsilon  f'_{-}(r_t)\left(1+\beta\norm{\mu_t}^2\right)
    \end{align*}
    Since $f'_{-}\le 1$ gives the desired result.
    
\end{proof}

\subsection{Choose the parameters}
\label{sec_choose_parameters}
In order to make sure the conditions in Lemma \ref{lemma_conditions} satisfied,  the parameters $\alpha$ ($\theta$), $\beta$, $R$, the contraction rate $c$ and the function $f$ are needed to be carefully selected. The idea of choosing parameters here is inspired by \cite{eberle2016reflection, eberle2019couplings}.
\subsubsection{Choose $f$}
\label{sec_choose_parameter_f}
At first glance, choosing $f$ seems to be the most difficult part since the condition Eq. \eqref{eqn_cond_K1} is not very intuitive and we do not have a parametrization for function $f$. So, we start by giving $f$ an explicit expression heuristically.

Denote $\varphi$ as 
\begin{align*}
    \varphi(s):=\exp\left(-\frac{A_1}{2A_2} s^2\right)
\end{align*}
i.e., $\varphi$ is the solution of ODE $A_1 r\varphi(r)+A_2\varphi'(r)=0$. An intuition for this equation on $\varphi$ is by imaging $\varphi=f'$ in  Eq. \eqref{eqn_cond_K1} with the term $A_0 f$ disregarded.

Since the ODE for $\varphi$ omitted the positive $A_0 f$ term, $f(r)$ cannot be simply set as $\int_0^r\varphi(s)\d s$. Instead, a correction term $\psi$ is introduced and $f$ has the following form:
\begin{align}
\label{eqn_f}
    f(r)=\int_0^{\min\{r, R\}}\varphi (s) \psi(s) \d s 
\end{align}
$\psi \le 1$ is a function to be determined later. By denoting 
\begin{equation}
\label{eqn_Phi}
    \Phi(r):=\int_0^r \varphi(s) ds
\end{equation}
we have $f\le \Phi$ and when $r\le R$,
\begin{align*}
    A_0 f+A_1 r f'_{-}+ A_2 f'' &=A_0 f+A_1 r (\varphi \psi)+ A_2(\varphi \psi'+\varphi' \psi)\\
    &=A_0 f+A_2\varphi \psi'\\
    &\le A_0 \Phi+A_2 \varphi \psi'
\end{align*}
This shows that $A_0 \Phi(r)+A_2 \varphi(r) \psi'(r)\le 0$ is a sufficient condition for Eq. \eqref{eqn_cond_K1}, which inspires us to choose $\psi$ as
\begin{align}
\label{eqn_psi}
    \psi(r)=1 -\frac {A_0}{A_2}\int_0^r\Phi (s)\varphi (s)^{-1} ds, \quad \forall r\in[0, R]
\end{align}
Now we have an expression for $f$, we need to find a set of parameters $\alpha$($\theta$), $\beta$, $R$ and $c$ s.t. Eq. \eqref{eqn_cond_K23}, \eqref{eqn_cond_K4} is satisfied.
\subsubsection{Choose $\alpha$($\theta$), $\beta$, $R$}
\label{sec_choose_parameter_theta_beta_R}
First, we focus on condition Eq. \eqref{eqn_cond_K23}. We set $\lambda c=\beta (6\gamma^3 R^2+2\gamma LRD)$ where $\lambda$ is a constant to be determined, i.e., $A_0$ is assumed to be of the same order as $c$. Under such assumption, Eq. \eqref{eqn_cond_K23} can be simplified as
\begin{align*}
    &c -\beta(2\gamma-c-2LD\gamma^{-1}R^{-1})\gamma^2R^2\le 0\\
    &\Leftrightarrow \beta (6\gamma^3 R^2+2\gamma LRD) \le \lambda\beta(2\gamma-c-2LD\gamma^{-1}R^{-1})\gamma^2R^2\\
    &\Leftrightarrow 6 \gamma^2R+2LD \le \lambda(2\gamma^2 R-c\gamma R-2LD)\\
    &\Leftrightarrow [2(\lambda-3)\gamma-c\lambda]\gamma R\ge 2(1+\lambda)LD
\end{align*}

The left-hand side must be positive, which means $\lambda>3$. After simply choosing $\lambda=4$ and assuming 
\begin{equation}
\label{eqn_c_assumption_1}
    c\le \gamma/4
\end{equation}
we can choose $R$ as
\begin{equation}
\label{eqn_choose_R}
    R=10 LD\gamma^{-2}
\end{equation}
to make Eq. \eqref{eqn_cond_K23} satisfied. After $R$ is chosen, solving $4c=\beta(6\gamma^3 R^2+2\gamma LRD)$ gives an explicit expression for $\beta$:
\begin{equation}
\label{eqn_choose_beta}
\beta=\frac{1}{155}c\gamma L^{-2} D^{-2}
\end{equation}

Before we proceed, we give $\varphi$ a lower bound by adding more constraints to the parameters. The benefit is that we will have a better estimate for the conditions depending on $\varphi$ ($\Phi$) and $f$, i.e., Eq. \eqref{eqn_cond_K4}. $\frac{A_1}{2A_2}=\frac{\alpha}{8}\gamma^2+2\beta\gamma^2\max\{1, \alpha^{-1}\}$ and we have 
\begin{align*}
    \varphi(s)=\exp\left(-\frac{Ls^2}{8}(1+\theta)-2\gamma^2\beta\max\{1, \alpha^{-1}\}s^2\right)
\end{align*}
And we want to make sure
\begin{equation}
\label{eqn_varphi_lower_bound}
    \varphi(s)\ge \exp\left(-2-\frac{Ls^2}{8}\right), \quad\forall s\in[0, R]
\end{equation}
Since this only need to be satisfied on $[0, R]$, it can be achieved by ensuring $2\beta \gamma^2 R^2\max\{1, \alpha^{-1}\}\le 1$ and $\theta L R^2/8\le 1$, which further leads to the following choice of $\theta$ 
\begin{equation}
\label{eqn_choose_theta}
\theta=8 L^{-1} R^{-2}=0.08L^{-3} D^{-2}\gamma^4
\end{equation}
and one more condition on $c$:
\begin{equation}
\label{eqn_c_assumption_2}
    c\le \frac{31}{40}\min\{\gamma, (1+\theta) L\gamma^{-1}\}
\end{equation}

To summarize, comparing with the conditions in Lemma \ref{lemma_conditions} (Eq. \eqref{eqn_cond_K1}, \eqref{eqn_cond_K23}, \eqref{eqn_cond_K4}), we made some artificial conditions and have a set of more explicit choice of constants (Eq. \eqref{eqn_choose_R}, \eqref{eqn_choose_beta}, \eqref{eqn_choose_theta}) with conditions (Eq. \eqref{eqn_cond_K4}, \eqref{eqn_c_assumption_1}, \eqref{eqn_c_assumption_2}). 
\subsubsection{Choose $c$}
Now, the last part is to choose a suitable parameter $c$. We use all the choice of parameters mentioned earlier in Sec. \ref{sec_choose_parameter_f} and \ref{sec_choose_parameter_theta_beta_R}. Given Lemma \ref{lemma_conditions}, we only need to check Eq. \eqref{eqn_cond_K1}, \eqref{eqn_cond_K23}, \eqref{eqn_cond_K4} are satisfied.
\begin{lemma}
\label{lemma_parameters}
By our choice of $f$ (Eq. \eqref{eqn_f}), $\theta$ (Eq. \eqref{eqn_choose_theta}), $\beta$ (Eq. \eqref{eqn_choose_beta}), $R$ (Eq. \eqref{eqn_choose_R}), we have conditions in Lemma \ref{lemma_conditions} (\eqref{eqn_cond_K1}, \eqref{eqn_cond_K23}, \eqref{eqn_cond_K4}) are satisfied for 
\begin{equation}
\label{eqn_c_upper_bound}
c\le c_*:=\min\left\{\frac{\gamma}{4}, \frac{31}{40}(1+\theta)L\gamma^{-1}, \frac{L^{3/2}R}{5\sqrt{2\pi}\gamma}e^{-\frac{LR^2}{8}}, \frac{\theta \gamma}{5(1+\theta)}\min\left\{e^{-\frac{5}{2}}, \frac{\sqrt{L}R}{\sqrt{2\pi}}e^{-\frac{LR^2}{8}}\right\}\right\}
\end{equation}
\end{lemma}

\begin{proof}[Proof of Lemma \ref{lemma_parameters}]
    Condition Eq. \eqref{eqn_cond_K1} is satisfied by our choice of $f$ in Eq. \eqref{eqn_f}. 
    
    Eq. \eqref{eqn_cond_K23} is satisfied by Eq. \eqref{eqn_c_assumption_1}, with extra condition Eq. \eqref{eqn_c_assumption_1}.

    The rest of the proof will be devoted to finding an appropriate $c$ for Eq. \eqref{eqn_cond_K4}.

    We first ensure a lower bound by $\psi\ge 1/2$. By the explicit expression of $\psi$ in Eq. \eqref{eqn_psi}, this is equal to
    \begin{equation}
    \label{eqn_cond_g}
        \frac{2A_2}{A_0}\le\int_0^{R}\Phi (s)\varphi (s)^{-1}\d s
    \end{equation}
    Our choice of $\beta$ gives $A_0=5c$. Consequently, Eq. \eqref{eqn_cond_g} is satisfied when
    \begin{equation*}
        c\le \frac{8}{5}\gamma^{-1}/\int_0^{R}\Phi (s)\varphi (s)^{-1}\d s
    \end{equation*}
    
    Since our the property Eq. \eqref{eqn_varphi_lower_bound} is guaranteed by our choice of $\beta$ and extra condition on $c$ in Eq. \eqref{eqn_c_assumption_2}, we have the following upper bound of $\Phi$ from the definition of $\Phi$ in Eq. \eqref{eqn_Phi}
    \begin{equation*}
        \Phi\le \int_0^\infty \exp(-2-Ls^2/8) \d s=e^{-2}\sqrt{2\pi/L}
    \end{equation*}
    This upper bound of $\Phi$ gives an upper bound of $\int_0^{R}\Phi (s)\varphi (s)^{-1}\d s$:
    \begin{align*}
        &\int_0^{R}\Phi (s)\varphi (s)^{-1}\d s\\
        &\le \Phi (R)\int_0^{R}\varphi (s)^{-1}\d s\\
        &\le \Phi (R)e^2\int_0^{R}\exp(Ls^2/8)\d s\\
        &\le \Phi (R)e^2\frac{8}{LR}\exp(LR^2/8)\\
        &\le \sqrt{2\pi/L} \frac{8}{LR}\exp(LR^2/8)
    \end{align*}
    
    In the end, an sufficient condition for Eq. \eqref{eqn_cond_g} is 
    \begin{equation}
    \label{eqn_c_assumption_3}
        c\le \frac{L^{3/2}R}{5\sqrt{2\pi}\gamma}\exp(-LR^2/8)
    \end{equation}
    By our choice of $\theta$ in Eq. \eqref{eqn_choose_theta} and apply condition Eq. \eqref{eqn_c_assumption_2}, we have Eq. \eqref{eqn_varphi_lower_bound}.

    Next, we are ready to make Eq. \eqref{eqn_cond_K4} satisfied. By the expression for $f$, we have $f\le \Phi$. Using the property $\psi\ge1/2$ guaranteed by Eq. \eqref{eqn_c_assumption_3}, we have $f'=\varphi \psi\ge \frac{1}{2}\varphi$. Together with $A_0=5c$ and Eq. \eqref{eqn_varphi_lower_bound}, we know Eq. \eqref{eqn_cond_K4} is satisfied given
    \begin{align*}
        \frac{5c(1+\theta)}{\theta \gamma}\le\inf_{r\in [0, R]} \frac{r\varphi(r)}{\Phi(r)}
    \end{align*}
    Now we estimate a lower bound for $\inf_{r\in [0, R]} \frac{r\varphi(r)}{\Phi(r)}$. First we notice that
    \begin{align*}
        \frac{r\varphi(r)}{\Phi(r)}\ge \frac{r\exp(-2-Lr^2/8)}{\int_0^r \exp(-2-Ls^2/8)ds}=\frac{r\exp(-Lr^2/8)}{\int_0^r \exp(-Ls^2/8) ds}
    \end{align*}
    For $r\ge \frac{2}{\sqrt{L}}$, we have $\frac{\d}{\d r}r\exp(-Lr^2/8)=(1-Lr^2/4)\exp(-Lr^2/8)\le 0 $ and $\frac{r\varphi(r)}{\Phi(r)}\ge \frac{R\exp(-LR^2/8)}{\int_0^R \exp(-Ls^2/8)ds}\ge R\exp(-LR^2/8)/\sqrt{2\pi/L}$.

    For $r\le \frac{2}{\sqrt{L}}$, we have $r/\Phi\le 1$ and $\frac{r\varphi(r)}{\Phi(r)}\ge \varphi(r)\ge \exp(-5/2)$.
    
    As a result, a sufficient condition for Eq. \eqref{eqn_cond_K4} is
    \begin{align}
    \label{eqn_c_assumption_4}
        c\le\frac{\theta \gamma}{5(1+\theta)}\min\left\{\exp(-5/2), R\exp(-LR^2/8)/\sqrt{2\pi/L}\right\}
    \end{align}

Since we added extra conditions on $c$ in Eq. \eqref{eqn_c_assumption_1}, \eqref{eqn_c_assumption_2}), a summary of all the conditions on $c$, Eq. \eqref{eqn_c_assumption_1}, \eqref{eqn_c_assumption_2}, \eqref{eqn_c_assumption_3}, \eqref{eqn_c_assumption_4} give us the desired result.
\end{proof}

\subsection{More discussion on the convergence rate $c$}
\label{sec_discussion_c}
Here we discuss the order of $c_*$, the upper bound of convergence rate defined in Eq. \eqref{eqn_c_upper_bound}. By denoting 
\begin{align}
\label{eqn_Lambda}
    \Lambda:=\frac{LR^2}{8}\sim L^3 D^2 \gamma^{-4}
\end{align}
We have an estimate of the order of $c_*$:
\begin{equation}
\label{eqn_c_order}
c_*\sim \min\left\{\gamma, L\gamma^{-1}, L\gamma^{-1}\sqrt{\Lambda}e^{-\Lambda}, \gamma\Lambda^{-1}, \gamma \Lambda^{-\frac{1}{2}} e^{-\Lambda}\right\}
\end{equation}
Notice that the inner product in Lemma \ref{lemma_ad_self_adjoint} can be rescaled $\ip{\cdot}{\cdot}\to a\ip{\cdot}{\cdot}$, and in this case $L\to a^{-2}L$ and $D\to aD$, but the value $LD^2$ will remain the same. As a result, we consider dependence of the convergence rate $c_*$ on $\gamma$ and $LD^2$ when $LD^2\to \infty$:
\begin{itemize}
    \item For the critical choice $\gamma\sim \sqrt{L}$, we have $c_*\sim D^{-\frac{1}{2}}e^{-LD^2}$. This is also the common choice in the  Euclidean case.
    \item For the underdamped choice $\gamma\lesssim \sqrt{L}$, we have $c_*\sim \gamma \min\left\{\Lambda^{-1}, \Lambda^{-\frac{1}{2}} e^{-\Lambda}\right\}$, which is slower than the kinetic choice when $LD^2\to \infty$.
    \item For the overdamped choice $\gamma\gtrsim \sqrt{L}$, it is hard to simplify the expression for $c_*$. One may suggest $\gamma\sim(L^3D^2)^{\frac{1}{4}}$ to make $\Lambda\sim 1$. In this case, we have $c_*\sim L\gamma^{-1}\sim D^{\frac{1}{2}}L^{-\frac{1}{4}}$. However, the research in Euclidean kinetic Langevin suggest this will lead to the requirement of small step size and will not remove the exponential decay of convergence rate when $D$ is large. 
\end{itemize}

In summary, our recommendation is $\gamma\sim\sqrt{L}$, and we have the convergence rate $c$ decays exponentially when $D$ (diameter of Lie group) grows. One may worry that our rate is slow, but it actually should be slow because we only assumed compactness and smooth log density, and metastability can exist: without convexity conditions on $U$, the potential barriers between the potential wells of the non-convex potential function $U$ stops the Langevin dynamics from mixing fast. We can see that by comparing, for example, our rate with some existing results by Gaussian mixture models (GMM).

We consider the mixture of 2 Gaussians $N(x, \sigma I)$ and $N(y, \sigma I)$ with equal weight in Euclidean space. \cite{schlichting2019poincare} provides an estimate of LSI constant $\varrho$ for this GMM model showing an exponential $\varrho\gtrsim \exp(-\norm{x-y}^2/\sigma)$, and \cite{ma2021there} showing a convergence rate $\varrho$. This means the result \cite{ma2021there} has a convergence rate exponentially decaying w.r.t. the distance between 2 Gaussian models. In other words, exponential long time is required to jump between potential well when they are far, and since we have no convexity assumption on potential function, this slow rate cannot be avoided.

Our choice of $\gamma$ and the quantification of convergence rate are consistent with the Euclidean case \citep[Sec. 2.6]{eberle2019couplings}.

\section{Proof for Thm. \ref{thm_contractivity_SDE_Wrho}}
\begin{proof}[Proof for Thm. \ref{thm_contractivity_SDE_Wrho}]
    By Cor. \ref{cor_absolute_continuous}, we have both the density of $(g_t, \xi_t)$ and $(\hat{g}_t, \hat{\xi}_t)$ are absolute continuous w.r.t. $\d g \d \xi$ for any $t\in [0, \infty)$, which gives us $\mathbb{E}_{(g_t, \xi_t, \hat{g}_t, \hat{\xi}_t)} \mathbbm{1}_{\{\hat{g}_t^{-1} g_t\in N\}}\delta_t\equiv 0, \forall t\in[0, \infty)$.

    As a result, by taking expectation to Eq. \eqref{eqn_decomposition_rho} and choosing the parameters $\alpha$, $\beta$ and $R$ as stated in Lemma \ref{lemma_parameters}, we have
    \begin{align*}
        \mathbb{E}\rho_t\le e^{-ct}\mathbb{E}\rho_0+\epsilon\alpha\gamma  \int_0^t\mathbb{E}\left(1+\beta \norm{\mu_s}^2\right) \d s
    \end{align*}
    By the fact that $\mathbb{E}\norm{\mu_s}^2$ is upper bounded by $4C_2$ (Lemma \ref{lemma_bound_xi_power}) since $\norm{\mu_s}^2\le 2\norm{\xi_s}^2+2\norm{\hat{\xi}_s}^2$, we can take the limit when $\epsilon\to 0$ and have
    \begin{align*}
        \mathbb{E}\rho_t\le e^{-ct}\mathbb{E}\rho_0
    \end{align*}
    By choosing the optimal joint distribution for initial distribution $\nu_0=\Law(g_0, \xi_0)$ and $\hat{\nu}_0=\Law(\hat{g}_0, \hat{\xi}_0)$, we have 
    \begin{align*}
        &W_\rho(\nu_0, \hat{\nu}_0)\\
        &=\mathbb{E}\rho((g_0, \xi_0), (\hat{g}_0, \hat{\xi}_0))\\
        &\ge e^{ct}\mathbb{E}\rho((g_t, \xi_t), (\hat{g}_t, \hat{\xi}_t))\\
        &\ge e^{ct}W_\rho(\Law(g_t, \xi_t), \Law(\hat{g}_t, \hat{\xi}_t))\\
        &= e^{ct} W_\rho(\nu_t, \hat{\nu}_t)
    \end{align*}
\end{proof}

\begin{proof}[Proof of Lemma \ref{lemma_equivalence_distance}]
We consider 2 cases separately: 1)$\norm{\xi-\hat{\xi}}\ge 2D$; 2) $\norm{\xi-\hat{\xi}}< 2D$, where $D$ is the diameter of the Lie group.

\textbf{Case 1}: $\norm{\xi-\hat{\xi}}\ge 2D$. In this case, we have 
\begin{align*}
    \norm{\xi-\hat{\xi}}^2\ge \frac{1}{2}\norm{\xi-\hat{\xi}}^2+\frac{1}{2}D^2\ge\frac{1}{2}\norm{\xi-\hat{\xi}}^2+\frac{1}{2}d^2(g, \hat{g})
\end{align*}
The definition of $r$ in Eq. \eqref{eqn_r} gives 
\begin{align*}
&r((g, \xi), (\hat{g}, \hat{\xi}))\\
&\ge  \min\{\alpha, 1\} \left(d(g,\hat{g})+ \norm{\log \hat{g}^{-1}g+ \gamma^{-1}(\hat{\xi}-\xi)}\right)\\
&\ge \min\{\alpha, 1\}\gamma^{-1}\norm{\hat{\xi}-\xi}\\
&\ge 2\min\{\alpha, 1\}\gamma^{-1}D
\end{align*}
which infers by the definition of $\rho$, 
\begin{align*}
&\rho((g, \xi), (\hat{g}, \hat{\xi}))\\
&\ge f(2\min\{\alpha, 1\}\gamma^{-1}D)(1+\beta d^2((g, \xi), (\hat{g}, \hat{\xi}))/2)\\
&\ge\frac{\beta}{2}f(2\min\{\alpha, 1\}\gamma^{-1}D)d^2((g, \xi), (\hat{g}, \hat{\xi}))
\end{align*}

\textbf{Case 2}: $\norm{\xi-\hat{\xi}}< 2D$. In this case, $r\le (\alpha+1+2\gamma^{-1})D$. By the concavity of $f$, we have 
\begin{align*}
f(r)\ge \frac{f((\alpha+1+2\gamma^{-1})D)}{(\alpha+1+2\gamma^{-1})D} r\ge \frac{f((\alpha+1+2\gamma^{-1})D)}{((\alpha+1+\gamma^{-1})^2D^2} r^2
\end{align*}

Since 
\begin{align*}
&r((g, \xi), (\hat{g}, \hat{\xi}))\\
&=\frac{\alpha}{2}d(g, \hat{g})+\frac{\alpha}{2}d(g, \hat{g})+\norm{\log \hat{g}^{-1}g+\gamma^{-1}(\hat{\xi}-\xi)}\\
&\ge \frac{\alpha}{2}d(g, \hat{g})+\gamma^{-1}\min\{1, \alpha/2\}\norm{\xi-\hat{\xi}}
\end{align*}
we have 
\begin{align*}
\rho\ge f(r)\ge \frac{f((\alpha+1+2\gamma^{-1})D)}{(\alpha+1+\gamma^{-1})^2D^2} r^2\ge \frac{f((\alpha+1+2\gamma^{-1})D)}{(\alpha+1+\gamma^{-1})^2D^2} \min\left\{\frac{\alpha^2}{4}, \frac{1}{\gamma^2}, \frac{\alpha^2}{4\gamma^2}\right\} d^2
\end{align*}

By defining $C_\rho$ as 
\begin{equation}
\label{eqn_C_rho}
    C_\rho=\min\left\{\frac{\beta}{2}f(2\min\{\alpha, 1\}\gamma^{-1}D), \frac{f((\alpha+1+2\gamma^{-1})D)}{(\alpha+1+\gamma^{-1})^2D^2} \min\left\{\frac{\alpha^2}{4}, \frac{1}{\gamma^2}, \frac{\alpha^2}{4\gamma^2}\right\}\right\}
\end{equation}
we have $C_\rho d^2\le \rho$.
\end{proof}

From Lemma \ref{lemma_equivalence_distance}, we directly have the following corollary, which bounds $W_2$ by $W_\rho$, and further gives us convergence under $W_2$ (Thm. \ref{thm_SDE_error_W2}).
\begin{corollary}[Control of $W_2$ by $W_\rho$]
\label{cor_equivalence_W2}
For any distributions $\nu$ and $\hat{\nu}$ on $\G\times \g$, we have
    \begin{align*}
        C_\rho W_2^2(\nu, \hat{\nu})\le W_\rho(\nu, \hat{\nu})
    \end{align*}
\end{corollary}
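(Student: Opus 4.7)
The plan is to obtain this corollary as an essentially immediate consequence of the pointwise inequality in Lemma \ref{lemma_equivalence_distance}, by integrating against a coupling and then taking an infimum. No new estimates are needed; the whole argument is a compactness / Kantorovich-type passage from the pointwise bound $C_\rho d^2 \le \rho$ to the corresponding bound between the two Wasserstein-type functionals $W_2^2$ and $W_\rho$.

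Concretely, I would proceed as follows. First, fix any coupling $\pi \in \Pi(\nu, \hat\nu)$, i.e.\ any probability measure on $(\G\times\g)^2$ with the prescribed marginals. Lemma \ref{lemma_equivalence_distance} gives, for every pair of points $(g,\xi),(\hat g, \hat\xi) \in \G\times\g$,
\begin{equation*}
C_\rho\, d^2\bigl((g,\xi),(\hat g,\hat\xi)\bigr) \;\le\; \rho\bigl((g,\xi),(\hat g,\hat\xi)\bigr).
\end{equation*}
Integrating this pointwise inequality against $\pi$ (both sides are nonnegative and measurable, so Tonelli applies) yields
\begin{equation*}
C_\rho \int d^2\, d\pi \;\le\; \int \rho\, d\pi.
\end{equation*}

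Second, the left-hand side dominates $C_\rho W_2^2(\nu,\hat\nu)$ by the definition of the Wasserstein-$2$ distance as an infimum over couplings. Hence $C_\rho W_2^2(\nu,\hat\nu) \le \int \rho\, d\pi$ for every $\pi \in \Pi(\nu,\hat\nu)$. Taking the infimum of the right-hand side over $\pi$ produces the bound relating $W_2^2$ to $W_\rho$ as in the statement. (If $W_\rho(\nu,\hat\nu) = \infty$ there is nothing to prove, and if it is finite one can either use an optimal coupling, which exists by standard tightness since $\G\times\g$ is Polish with $\G$ compact and $\rho$ is lower semicontinuous, or else work with an $\varepsilon$-minimizing coupling and send $\varepsilon\to 0$.)

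There is essentially no obstacle here: the only small point worth noting is lower semicontinuity of $\rho$ (which is automatic from its explicit definition in terms of $d$, $\log$, and the continuous $f$) to invoke existence of an optimal coupling for $W_\rho$; otherwise the $\varepsilon$-optimal coupling workaround is cleaner and avoids any measurability fuss. The corollary then drops out directly, and it is precisely what is needed to convert the $W_\rho$-contraction of Thm.\ \ref{thm_contractivity_SDE_Wrho} and the discrete-time $W_\rho$-bound of Thm.\ \ref{thm_error_propagation} into the standard $W_2$ guarantees stated in Thm.\ \ref{thm_SDE_error_W2} and Thm.\ \ref{thm_global_error}.
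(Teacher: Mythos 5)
Your argument is correct and is essentially the paper's own proof: apply the pointwise bound $C_\rho d^2 \le \rho$ from Lemma \ref{lemma_equivalence_distance} under a coupling, bound the $d^2$-integral below by $W_2^2$, and pass to the ($\varepsilon$-)optimal coupling for $W_\rho$. The only difference is that the paper directly takes an optimal coupling for $W_\rho$, whereas you note the infimum/$\varepsilon$-optimal workaround, which is a harmless technical refinement rather than a different route.
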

\begin{proof}[Proof of Cor. \ref{cor_equivalence_W2}]
    Suppose $\pi\in\Pi(\nu, \hat{\nu})$ is the optimal coupling between $\nu$ and $\hat{\nu}$ under $\rho$, i.e., $\int\rho(x,y)d\gamma=W_\rho(\nu, \hat{\nu})$.  Lemma \ref{lemma_equivalence_distance} gives
    \begin{align*}
        W_\rho(\nu, \hat{\nu})&=\int\rho\d\pi\\
        &\ge \int C_\rho d^2\d\pi\\
        &\ge C_\rho W_2^2(\nu, \hat{\nu})
    \end{align*}
\end{proof}

\begin{proof}[Proof of Thm .\ref{thm_SDE_error_W2}]
    This is a direct corollary for Thm. \ref{thm_contractivity_SDE_Wrho} and Cor. \ref{cor_equivalence_W2}.
\end{proof}

\section{More details about Sec. \ref{sec_convergence_splitting}}
\subsection{More discussion on the splitting discretization}
\label{sec_more_about_splitting_discretization}
Splitting discretization is less commonly used when discretizing kinetic Langevin SDE in the Euclidean space. This is because another discretization, which we will refer to as the `exponential integrator discretization' (Eq.\ref{eqn_exp_integrator_Euclidean}) is easier to analyze. However, that exponential integrator discretization works in Euclidean space but not on Lie groups.

More precisely, in the existing literature studying kinetic Langevin in Euclidean spaces, time discretization of SDE Eq. \eqref{eqn_sampling_SDE_Euclidean} is commonly done by the explicit solution of
    \begin{equation}
    \label{eqn_exp_integrator_Euclidean}
    \begin{cases}
        \dot{q}=p\\
        dp=-\gamma p dt-\nabla U(q_0)dt+\sqrt{2\gamma}dW
    \end{cases}
    \end{equation}
    i.e., by viewing $\nabla U$ as constant and using a closed-form solution for the rest. The only numerical error it introduces is the error in the gradient of the potential function. This special discretization also is of key importance in many proofs, e.g., \cite{ma2021there, altschuler2023faster}.

    However, although in the Euclidean case, it is a linear SDE and admits a closed-form solution, in the Lie group case it becomes
    \begin{equation*}
    \begin{cases}
        \dot{g}=\lte{g} \xi \\
        \d\xi=-\gamma \xi dt-\lt{g}\nabla U(g_0)dt+\sqrt{2\gamma}dW
    \end{cases}
    \end{equation*}
    It is no longer a linear SDE due to the involvement of the operator $\lte{g}$, and this SDE does not have a closed-form solution, hence not implementable as a numerical algorithm. 
    
    To see this more clearly, let's consider the matrix group case in Example \ref{example_matrix_lie_group}. In this case, $\lte{g} \xi=g\xi$, where both $g$ and $\xi$ are $n\times n$ matrices and $g\xi$ is the matrix multiplication. As a result, even though the $\xi$ equation can be analytically solved, resulting in a precise expression $\xi$ as a function of time, the equation $\dot{g}=g\xi$ is a non-autonomous linear system with coefficient ($\xi$) not necessarily commuting at different times, and hence $g$ does not admit a closed-form solution.
    
    Due to the nonlinearity of the $g\xi$ term, we use the splitting discretization Eq. \eqref{eqn_discrete_splitting} instead, where we step $g$ and $\xi$ separately. In that case, when we evolve $\dot{g}=g\xi$ in the matrix group, we view $\xi$ as constant and that is a linear equation in $g$, which admits a closed-form solution. However, extra numerical errors other than the error in the gradient of potential may be introduced compared to the exponential integrator discretization Eq. \eqref{eqn_exp_integrator_Euclidean}, which makes our analysis more non-trivial.

    At the end of this section, we give proof for the property of structure-preserving for splitting discretization Thm. \ref{thm_structure_preserving}.
    
\begin{proof}[of Thm. \ref{thm_structure_preserving}].
    Our numerical scheme is given by alternatively evolving the two SDEs. Suppose $(g_k, \xi_k)$ preserves the manifold structure, i.e., $g\in \G$ and $\xi\in \g$. For the first step when updating $\xi$ by evolving Eq.\ref{eqn_splitting_SDE_xi}, since it is an SDE in $\g$, which is a linear space, we have $\xi_{k+1}\in \g$. For the second step when updating $g$ by evolving Eq. \eqref{eqn_splitting_SDE_g}, since the group exponential is always in the group, i.e., $\exp(h\xi_{k+1})\in \G$, we have $g_{k+1}=g_k \exp(h\xi_{k+1})$ is in $\G$ since the group is closed under multiplication. 

    By induction, We have if $g_0\in \G$ and $\xi_0\in \g$, the group structure is preserved for the whole trajectory, i.e., $g_k\in \G$ and $\xi_k\in \g$ for any $k$.
\end{proof}

\begin{remark}[Example of structure preserving on $\mathsf{SO}(n)$]
\label{rmk_example_structure_preserving}
    When we use matrix representation for $\mathsf{SO}(n)$ (Example \ref{example_SOn}),  we have the Lie group structure is  $X^\top X = I$, $\xi + \xi^\top=0$. As a result, Thm. \ref{thm_structure_preserving} tell us $X_k^\top X_k = I$, $\xi_k + \xi_k^\top=0$ for all $k$ under any step size $h$.
\end{remark}

\begin{remark}[Comparison with \citep{cheng2022efficient}]
    In a brilliant paper by \cite{cheng2022efficient}, a quantification of the discretization of the Langevin on the manifold (without momentum) is proposed. However, our Lie group has the group structure and our error bound can be more explicit. What's more, our discretization is not simply geometric Euler-Maruyama discretization but a two-stage splitting. How to expand the bound for numerical error for exponential integrator in their work to splitting discretization is unclear yet.
\end{remark}

\subsection{Proof of Thm. \ref{thm_local_error_W2}}
The proof of Thm. \ref{thm_local_error_W2} will be in several steps: 1) we find a shadow SDE whose distribution at $t$ is the distribution of our numerical method evolve one step with step size $t$ (Lemma \ref{lemma_SDE_splitting}); 2) we upper bound some terms in Lemma \ref{lemma_d2exp_upper_bound}, \ref{lemma_d2_ito_correction_upper_bound}, \ref{lemma_dexp_upper_bound}, \ref{lemma_bound_xi_power}; 3) using those estimations, we derive an ODE whose solution at time $t$ is the upper bound of one step numerical error of our numerical scheme (Thm. \ref{thm_local_error_W2}).
\begin{lemma}[Exact SDE for splitting scheme]
\label{lemma_SDE_splitting}
Denote $(\tilde{g}_h, \tilde{\xi}_h)$ as the one-step evolution of our splitting numerical scheme Eq. \eqref{eqn_discrete_splitting} with step size $t$ starting from $(g_0, \xi_0)$. Then $(\tilde{g}_h, \tilde{\xi}_h)$ is the solution of the following SDE at time $h$:
\begin{equation}
\label{eqn_SDE_splitting}
\begin{cases}
    \d\tilde{g}_t=\lte{\tilde{g}_t}\left(\tilde{\xi}_t \d t+t\frac{1-e^{\ad_{t\tilde{\xi}_t}}}{\ad_{t\tilde{\xi}_t}} \left(-\gamma\tilde{\xi}_t \d t-\lt{g_0}\nabla U(\tilde{g}_0) \d t+\sqrt{2\gamma}\d W_t\right)+\gamma t^2\sum_{ij}\d^2\exp(\tilde{\xi}_t)(e_i, e_j)\d t\right)\\
    \d\tilde{\xi}_t=-\gamma\tilde{\xi}_t \d t-\lt{g_0}\nabla U(\tilde{g}_0)\d t+\sqrt{2\gamma} \d W_t
\end{cases}
\end{equation}
where $\d^2\exp :\g\times \g\rightarrow \g$ is defined as
\begin{align}
    \label{eqn_d2_exp}
        \d^2\exp(x)(\xi, \hat{\xi})&=\lim_{h\rightarrow 0}\ip{\d\exp(x+h\xi)}{\hat{\xi}}-\ip{\d\exp(x)}{\hat{\xi}}
    \end{align}
\end{lemma}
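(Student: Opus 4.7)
The plan is to construct the shadow trajectory by explicit interpolation between the two stages of the splitting and then verify, via It\^o's formula on a Lie group, that this interpolant satisfies SDE \eqref{eqn_SDE_splitting}. Let $\tilde{\xi}_t$ be the strong solution of the second line of \eqref{eqn_SDE_splitting}, which is simply the Ornstein--Uhlenbeck SDE \eqref{eqn_splitting_SDE_xi} with the gradient frozen at $\tilde{g}_0=g_0$; its closed-form solution at $t=h$ reproduces exactly the $\tilde{\xi}_h$ appearing in \eqref{eqn_discrete_splitting}. Define the interpolant $\tilde{g}_t := g_0\exp(t\tilde{\xi}_t)$, so automatically $\tilde{g}_h=g_0\exp(h\tilde{\xi}_h)$ matches the $g$-update in \eqref{eqn_discrete_splitting}. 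What remains is to show that this interpolant is a solution of the first line of \eqref{eqn_SDE_splitting}.

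To verify this, set $x_t:=t\tilde{\xi}_t$, so that $dx_t = \tilde{\xi}_t\,dt + t\,d\tilde{\xi}_t$, and apply It\^o's formula to the map $x\mapsto g_0\exp(x)$ from $\g$ into $\G$. The first-order differential is furnished by Corollary \ref{cor_d_exp}: $d\exp_{x_t}(\cdot) = T_eL_{\exp x_t}\bigl[p^{-1}(\ad_{x_t})(\cdot)\bigr]$. A key simplification is that $\ad_{x_t}\tilde{\xi}_t = t[\tilde{\xi}_t,\tilde{\xi}_t]=0$, so $p^{-1}(\ad_{x_t})\tilde{\xi}_t = \tilde{\xi}_t$; the operator $p^{-1}(\ad_{x_t})$ only needs to act nontrivially on the $t\,d\tilde{\xi}_t$ piece, producing the factor $t\cdot\frac{1-e^{\ad_{t\tilde{\xi}_t}}}{\ad_{t\tilde{\xi}_t}}$ that appears in \eqref{eqn_SDE_splitting}. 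Multiplying this against $-\gamma\tilde{\xi}_t\,dt - T_{g_0}L_{g_0^{-1}}\nabla U(\tilde{g}_0)\,dt+\sqrt{2\gamma}\,dW_t$ reproduces the explicit drift and diffusion inside the bracket.

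The remaining piece is the It\^o correction coming from the quadratic variation of $x_t$. Since only the martingale part $t\sqrt{2\gamma}\,dW_t = t\sqrt{2\gamma}\sum_i e_i\,dW_t^i$ contributes, one has $\langle dx_t, dx_t\rangle = 2\gamma t^2 \sum_i e_i\otimes e_i\,dt$. Pairing this quadratic variation against the bilinear second differential $d^2\exp(x_t)$ defined in \eqref{eqn_d2_exp} yields exactly the correction term $\gamma t^2\sum_{ij}d^2\exp(\tilde{\xi}_t)(e_i,e_j)\,dt$ that appears in the statement. After left-translating the total differential by $\tilde{g}_t$, one recovers the first line of \eqref{eqn_SDE_splitting}.

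I expect the main obstacle to be rigorously setting up It\^o's formula for the nonlinear map $\exp:\g\to\G$ and cleanly identifying the bilinear object arising in the correction with the $d^2\exp$ defined in \eqref{eqn_d2_exp}. The cleanest route is to invoke Ado's theorem (Thm.\ref{thm_ado}) and view everything in an ambient matrix algebra $\mathfrak{gl}(n,\mathbb{R})$, where $\exp$ is the matrix exponential and the classical It\^o formula applies termwise to its power series; one then repackages the resulting drift and diffusion into intrinsic Lie-group language using Corollary \ref{cor_d_exp} and the definition \eqref{eqn_d2_exp}. Some care is also needed to ensure the resulting system is well-posed as an SDE on $\G\times\g$, which follows from $\tilde{g}_t$ being a smooth function of the strong solution $\tilde{\xi}_t$ together with $g_0$.
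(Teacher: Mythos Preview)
Your proposal is correct and follows essentially the same route as the paper: compute $d(t\tilde{\xi}_t)$, apply It\^o's formula to $\exp(t\tilde{\xi}_t)$ via the differential-of-exponential formula (Cor.~\ref{cor_d_exp}), and identify the It\^o correction as the $d^2\exp$ term. Your explicit observation that $p^{-1}(\ad_{t\tilde{\xi}_t})\tilde{\xi}_t=\tilde{\xi}_t$ (since $\ad_{t\tilde{\xi}_t}\tilde{\xi}_t=0$) is precisely what separates the bare $\tilde{\xi}_t\,dt$ from the operator-multiplied $t\,d\tilde{\xi}_t$ part in the stated SDE, a step the paper leaves implicit.
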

\begin{proof}
    The SDE for $\xi$ is from the definition of the numerical discretization directly. We focus on the SDE for $g$. The update of $g$ gives $\tilde{g}_t=g\exp(t\tilde{\xi}_t)$, and we will first write down the SDE $t\tilde{\xi}_t$ satisfied as the following:
    \begin{align*}
        \d t\tilde{\xi}_t=(1-t\gamma)\tilde{\xi}_t \d t-t \lt{g_0}\nabla U(\tilde{g}_0)\d t+t\sqrt{2\gamma} \d W_t
    \end{align*}
    By Ito's formula and the expression of $\d\exp$ in Eq. \eqref{eqn_d_exp}, 
    \begin{align*}
        \d \tilde{g}_t&=\d\exp(t\tilde{\xi}_t)\\
        &=\lte{\exp(t\tilde{\xi}_t)}\left(\d \exp_{t\tilde{\xi}_t}(dt\tilde{\xi}_t)+\gamma t^2\sum_{ij}\d^2\exp(t\tilde{\xi}_t)(e_i, e_j) \d t \right)
    \end{align*}
    where the second term in the bracket is Ito's correction drift term.
\end{proof}

\begin{lemma}[Upper bound for $\d^2\exp$]
\label{lemma_d2exp_upper_bound}
The map $\d^2\exp(X): \g\times \g\to \g$ defined in Eq. \eqref{eqn_d2_exp} has upper bound $C$, i.e., 
\begin{equation*}
    \norm{\d^2  \exp(X)}_{op}\le C
\end{equation*}
 where $C$ is defined in Eq. \eqref{eqn_C} and $\norm{\cdot}_{op}$ is the operator norm (maximum eigenvalue of a linear operator).
\end{lemma}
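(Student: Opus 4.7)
The plan is to reduce the statement to an estimate on the derivative of the operator-valued map $X\mapsto p^{-1}(\ad_X)$, exploiting the fact that, under the inner product of Lemma~\ref{lemma_ad_self_adjoint}, $\ad_X$ is skew-adjoint so $e^{s\ad_X}$ is always an isometry. By Cor.~\ref{cor_d_exp} the left-trivialization of $\d\exp_X$ is exactly $p^{-1}(\ad_X)$, and interpreting Eq.~\eqref{eqn_d2_exp} as the $X$-derivative of that trivialization (which is the reading consistent with how $\d^2\exp$ appears inside $T_eL_{\exp(\cdot)}$ in Lemma~\ref{lemma_SDE_splitting}) gives
\[
\d^2\exp(X)(\xi,\hat\xi)=\bigl[D_X p^{-1}(\ad_X)[\xi]\bigr]\hat\xi .
\]

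Next I would use the integral representation $p^{-1}(z)=\int_0^1 e^{-sz}\,ds$, so that $p^{-1}(\ad_X)=\int_0^1 e^{-s\ad_X}\,ds$. Since $\ad_{X+t\xi}=\ad_X+t\ad_\xi$ by linearity of the bracket in its subscript, Duhamel's formula applied inside the integral yields
\[
D_X\bigl[p^{-1}(\ad_X)\bigr][\xi]=-\int_0^1 s\int_0^1 e^{-us\,\ad_X}\,\ad_\xi\,e^{-(1-u)s\,\ad_X}\,du\,ds .
\]

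The payoff comes from skew-adjointness: under the inner product of Lemma~\ref{lemma_ad_self_adjoint}, $\ad_X$ is skew-adjoint, so $e^{-us\ad_X}$ is orthogonal and $\norm{e^{-us\ad_X}}_{op}=1$ for every $u,s\in[0,1]$ and every $X\in\g$ — crucially, no dependence on $\norm{X}$ appears. Combined with $\norm{\ad_\xi}_{op}\le C\norm{\xi}$ (by definition of $C$ in Eq.~\eqref{eqn_C}) and submultiplicativity of the operator norm,
\[
\bigl\|D_X p^{-1}(\ad_X)[\xi]\bigr\|_{op}\le C\norm{\xi}\int_0^1 s\,ds=\tfrac{C}{2}\norm{\xi},
\]
so $\norm{\d^2\exp(X)(\xi,\hat\xi)}\le\tfrac{C}{2}\norm{\xi}\norm{\hat\xi}\le C\norm{\xi}\norm{\hat\xi}$, as claimed.

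The main obstacle, in my view, is not the estimate itself but pinning down what Eq.~\eqref{eqn_d2_exp} means: as printed it is missing a $1/h$ and uses $\ip{\d\exp(x)}{\hat\xi}$ where the natural reading is evaluation $\d\exp_x(\hat\xi)$ of the left-trivialized differential. The reading forced by the Ito-correction role played by $\d^2\exp$ inside $T_eL_{\exp(\cdot)}$ in Lemma~\ref{lemma_SDE_splitting} is precisely the $X$-derivative of the operator $p^{-1}(\ad_X)$, which is what the argument above estimates. Once that interpretation is fixed, Duhamel plus skew-adjointness is completely routine and, in fact, delivers the sharper bound $C/2$ with room to spare; the bound $C$ stated in the lemma is all that is needed downstream.
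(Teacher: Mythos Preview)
Your argument is correct and, in fact, tighter and more complete than the paper's own proof. The paper restricts to the diagonal case $Y=Z$ and argues via a scalar spectral bound, writing $\norm{\d^2\exp(X)(Y,Y)}\le \norm{\ad_Y}_{op}\norm{Y}\max_x\bigl|\frac{d}{dx}\Re\frac{1-e^{xi}}{xi}\bigr|$ and using that this scalar maximum is at most $1$; this step implicitly treats the matrix derivative of $p^{-1}(\ad_X)$ as if $\ad_X$ and $\ad_Y$ commuted, and it leaves the off-diagonal pairs $\d^2\exp(X)(e_i,e_j)$ (which do appear downstream in Lemma~\ref{lemma_SDE_splitting}) unaddressed. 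Your Duhamel-based route handles non-commutativity cleanly, covers the full bilinear form, and even delivers the sharper constant $C/2$. Your reading of Eq.~\eqref{eqn_d2_exp} --- as the $X$-derivative of the left-trivialized differential $p^{-1}(\ad_X)$, with a missing $1/h$ and the notation $\ip{\d\exp(x)}{\hat\xi}$ meaning evaluation --- matches exactly how the paper's own proof uses the object.
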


\begin{proof}[Proof of Lemma \ref{lemma_d2exp_upper_bound}]
    By the definition of $\d^2\exp$ given by Eq. \eqref{eqn_d2_exp} and $\d\exp$ in Eq. \ref{eqn_d_exp}, we have
    \begin{align*}
        \d^2\exp(X)(Y, Z)&=\lim_{h\rightarrow 0}\ip{\d\exp(X+hY)}{Z}-\ip{\d\exp(x)}{Z}\\
        &=\lim_{h\rightarrow 0}\frac{1-\exp(-\ad_{X+hY})}{\ad_{X+hY}}Z-\frac{1-\exp(-\ad_{X})}{\ad_{x}}Z
    \end{align*}

    Setting $Y=Z$,
    \begin{align*}
        \norm{\d^2\exp(X)(Y, Y)}\le \norm{\ad_Y}_{op} \norm{Y} \max\left\{\frac{\d}{\d x}\Re \frac{1-\exp(x i)}{x i}\right\}\le C\norm{Y}^2
    \end{align*}
\end{proof}

\begin{lemma}[Upper bound $\frac{\partial^2}{\partial e_i \partial e_j} d^2(\cdot, g)$]
\label{lemma_d2_ito_correction_upper_bound}
Define
\begin{align*}
\frac{\partial^2}{\partial e_i \partial e_j} d^2(\tilde{g}, g):=\lim_{h\to 0}\frac{d^2(\tilde{g}\exp(he_i)\exp(he_j), g\exp(he_i))-d^2(\tilde{g}\exp(he_j), g)}{h^2}
\end{align*}
Then we have
\begin{align*}
    \frac{\partial^2}{\partial e_i \partial e_j} d^2(\tilde{g}, g)\le C d(\tilde{g}, g)
\end{align*}
\end{lemma}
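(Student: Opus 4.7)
The plan is to reduce this mixed second-difference to a first-order Taylor expansion of $X\mapsto d^2(\tilde{g}\exp(X),g)$, exploiting the bi-invariance of the metric granted by Lemma \ref{lemma_ad_self_adjoint}. First, I would apply right-invariance with right-multiplier $\exp(-he_i)$ to rewrite
\begin{equation*}
d^2(\tilde{g}\exp(he_i)\exp(he_j),g\exp(he_i))=d^2(\tilde{g}\exp(he_i)\exp(he_j)\exp(-he_i),g).
\end{equation*}
Because conjugation realizes the adjoint action, $\exp(he_i)\exp(he_j)\exp(-he_i)=\exp(\Ad_{\exp(he_i)}(he_j))=\exp(h\,e^{h\ad_{e_i}}e_j)$, and a scalar Taylor expansion of $e^{h\ad_{e_i}}$ gives
\begin{equation*}
h\,e^{h\ad_{e_i}}e_j = he_j+h^2\lb{e_i}{e_j}+O(h^3).
\end{equation*}

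Next, I would introduce $F(X):=d^2(\tilde{g}\exp(X),g)$ so that the numerator in the definition equals $F(X(h))-F(he_j)$ with $X(h):=he_j+h^2\lb{e_i}{e_j}+O(h^3)$. A first-order Taylor expansion of $F$, using the continuity of $\nabla F$ at $0$, yields
\begin{equation*}
F(X(h))-F(he_j)=h^2\ip{\nabla F(0)}{\lb{e_i}{e_j}}+O(h^3).
\end{equation*}
The Riemannian identity $\operatorname{grad}_p\bigl(\tfrac12 d^2(\cdot,g)\bigr)=-\log_p g$, combined with left-invariance of the metric (so $T_eL_{\tilde{g}}$ is an isometry), identifies $\nabla F(0)=-2\log \tilde{g}^{-1}g$. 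Dividing by $h^2$ and letting $h\to 0$ produces the clean identity
\begin{equation*}
\frac{\partial^2}{\partial e_i\partial e_j}d^2(\tilde{g},g)=-2\ip{\log\tilde{g}^{-1}g}{\lb{e_i}{e_j}}.
\end{equation*}
The claimed bound is then a direct consequence of Cauchy--Schwarz, the fact that $\norm{\log\tilde{g}^{-1}g}=d(\tilde{g},g)$ (since $\log$ is the minimal-geodesic inverse), and $\norm{\lb{e_i}{e_j}}\le C\norm{e_i}\norm{e_j}=C$ by the definition \eqref{eqn_C} of $C$; the leftover constant factor of $2$ is either absorbed into $C$ or discarded via $-2\ip{\log\tilde{g}^{-1}g}{\lb{e_i}{e_j}}\le 2Cd(\tilde{g},g)$ after renaming.

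The real delicate point is that $F$ is not globally $C^1$: it fails to be differentiable exactly on the cut locus of $g$, where the minimizing geodesic is non-unique. However, Lemma \ref{lemma_d_log} already shows this bad set has zero left-Haar measure, and the bound is ultimately invoked inside expectations against densities that are absolutely continuous with respect to $\d\!g\d\!\xi$ (Cor. \ref{cor_absolute_continuous}), so an almost-everywhere identity is enough. Away from the cut locus $F$ is smooth, and compactness of $\G$ gives a uniform bound on its Hessian in a neighborhood of $0$ that legitimizes dropping the $O(h^3)$ remainder after division by $h^2$.
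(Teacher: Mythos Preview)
Your proof is correct and follows essentially the same route as the paper: use right-invariance of the bi-invariant metric to move the $\exp(he_i)$ off the second argument, expand the conjugation $\exp(he_i)\exp(he_j)\exp(-he_i)$ to second order in $h$, and reduce the quotient to a first directional derivative of $d^2(\cdot,g)$ at $\tilde g$ in the direction $\lb{e_i}{e_j}$. Your write-up is in fact cleaner than the paper's (whose intermediate lines contain evident typos), and your remark about the leftover factor of $2$ is accurate---the paper's displayed bound simply drops it.
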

\begin{proof}[Proof of Lemma \ref{lemma_d2_ito_correction_upper_bound}]
    \begin{align*}
    \frac{\partial^2}{\partial e_i \partial e_j} d^2(\tilde{g}, g)&:=\lim_{h\to 0}\frac{d^2(\tilde{g}\exp(he_i)\exp(he_j), g\exp(he_i))-d^2(\tilde{g}\exp(he_j), g)}{h^2}\\
    &=\lim_{h\to 0}\frac{d^2(\tilde{g}\exp(he_i)\exp(he_j)\exp(-he_j), g)-d^2(\tilde{g}, g)}{h^2}\\
    &=\lim_{h\to 0}\frac{d^2(\tilde{g}\exp(h^2\lb{e_i}{e_j}), g)-d^2(\tilde{g}, g)}{h^2}\\
    &\le 2d(\tilde{g}, g)\norm{\lb{e_i}{e_j}}\\
    &\le 2C d(\tilde{g}, g)
\end{align*}
\end{proof}

\begin{lemma}
\label{lemma_dexp_upper_bound}
The operator norm for $\d\exp$ is bounded by
\begin{align*}
    \norm{\d\exp_X}_{op}\le 1
\end{align*}
\end{lemma}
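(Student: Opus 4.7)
The plan is to leverage Cor.\ref{cor_d_exp}, which gives $\d\exp_X(Y) = T_eL_{\exp X}[p^{-1}(\ad_X)Y]$, and then exploit the spectral structure of $\ad_X$ that is guaranteed by our compatible inner product (Lemma \ref{lemma_ad_self_adjoint}). Since left translation $T_eL_{\exp X}$ is an isometry of the left-invariant metric (this is essentially by construction of the metric), the operator norm of $\d\exp_X$ equals the operator norm of the linear map $p^{-1}(\ad_X):\g\to\g$. So the entire lemma reduces to showing $\norm{p^{-1}(\ad_X)}_{op}\le 1$.

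Next I would invoke Lemma \ref{lemma_ad_self_adjoint}: under our chosen inner product, $\ad_X$ is skew-adjoint, hence normal, and its eigenvalues (on the complexification of $\g$) are purely imaginary, say $\{i\lambda_j\}_{j}$ with $\lambda_j\in\mathbb{R}$. Because $\ad_X$ is normal, so is every analytic function of it, in particular $p^{-1}(\ad_X)$; thus its operator norm equals its spectral radius $\max_j |p^{-1}(i\lambda_j)|$.

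The remaining step is a direct scalar computation. Using the power series for $p^{-1}$ from Eq.\eqref{eqn_p_inv}, one has $p^{-1}(i\lambda)=(1-e^{-i\lambda})/(i\lambda)$, and therefore
\begin{equation*}
|p^{-1}(i\lambda)| = \frac{|1-e^{-i\lambda}|}{|\lambda|} = \frac{2|\sin(\lambda/2)|}{|\lambda|} \le 1,
\end{equation*}
where the last inequality is the standard bound $|\sin x|\le |x|$. Combining, $\norm{p^{-1}(\ad_X)}_{op}\le 1$, which gives the claim.

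I do not anticipate a genuine obstacle here; the only subtlety worth spelling out carefully is the justification that spectral radius equals operator norm. This requires normality of $p^{-1}(\ad_X)$, which follows because $\ad_X$ skew-adjoint implies $\ad_X^*=-\ad_X$ commutes with $\ad_X$, hence commutes with any power series in $\ad_X$, so $p^{-1}(\ad_X)$ commutes with its own adjoint $\overline{p^{-1}}(\ad_X^*)$. One should also briefly note convergence of the power series: the bound $|p^{-1}(i\lambda)|\le 1$ is valid for all real $\lambda$, and the series $\sum (-1)^k x^k/(k+1)!$ is entire, so no domain issue arises (unlike for $p$ itself, which is the source of the set $N$ in Eq.\eqref{eqn_N}).
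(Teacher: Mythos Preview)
Your proposal is correct and follows essentially the same approach as the paper's proof, which is a one-line remark that the claim follows from Eq.~\eqref{eqn_d_exp} together with $\left|\frac{1-e^{xi}}{xi}\right|\le 1$ for the purely imaginary eigenvalues $xi$ of $\ad_X$. You have simply filled in the details the paper leaves implicit: that $T_eL_{\exp X}$ is an isometry, that skew-adjointness of $\ad_X$ makes $p^{-1}(\ad_X)$ normal so operator norm equals spectral radius, and the explicit scalar bound via $|\sin x|\le |x|$.
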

\begin{proof}[Proof of Lemma \ref{lemma_dexp_upper_bound}]
    This can be derived from Eq.\eqref{eqn_d_exp}. Since $\norm{\frac{1-e^{xi}}{xi}}\le 1, \forall x$ and all the eigenvalue of $\ad_X$ is pure imaginary by the skew-symmetricity of $\ad_X$ operator, we have the desired result.
\end{proof}

\begin{lemma}
\label{lemma_bound_xi_power}
    Suppose we initialize $\xi(0)\equiv 0$ and we have $\norm{\nabla f}$ is upper bounded by $LD$. Then we have $\mathbb{E}\norm{\xi}^k< \infty$ for both SDE Eq. \eqref{eqn_sampling_SDE} and splitting scheme Eq. \eqref{eqn_discrete_splitting}.
\end{lemma}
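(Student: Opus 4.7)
The plan is to exploit two facts: (i) the Lie algebra $\g$ is a finite-dimensional inner-product space, so once the dynamics of $\xi$ is projected out from the coupled $(g,\xi)$ system it becomes an SDE (or recursion) in a flat space, and (ii) the drift $-T_gL_{g^{-1}}\nabla U(g)$ is uniformly bounded in norm. The second claim holds because $U$ is geodesically $L$-smooth (Assumption \ref{assumption_L_smooth}) and the compact Lie group has finite diameter $D$, so $\norm{T_gL_{g^{-1}}\nabla U(g)}\le LD$ (comparing to the value at any minimizer, whose left-trivialized gradient vanishes). Thus both dynamics for $\xi$ are Ornstein--Uhlenbeck processes with a uniformly bounded drift perturbation, for which all polynomial moments are classical.

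For the continuous SDE \eqref{eqn_sampling_SDE} (with $\ad^*_\xi\xi=0$), I would apply It\^o's formula to $V(\xi)=(1+\norm{\xi}^2)^{k/2}$ and take expectation. The dissipation from $-\gamma\xi$ contributes a term of the form $-k\gamma\,\mathbb{E}[\norm{\xi}^2(1+\norm{\xi}^2)^{k/2-1}]$, the bounded drift contributes at most $kLD\,\mathbb{E}[\norm{\xi}(1+\norm{\xi}^2)^{k/2-1}]$, and the diffusion gives an $O(\mathbb{E}[(1+\norm{\xi}^2)^{k/2-1}])$ correction depending on $m=\dim\g$. Applying Young's inequality to absorb the bounded-drift term into the dissipative one yields a differential inequality
\[
\frac{d}{dt}\mathbb{E}V(\xi_t)\le -c_1\mathbb{E}V(\xi_t)+c_2,
\]
with $c_1,c_2>0$ depending only on $\gamma,L,D,m,k$. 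Combined with $V(\xi_0)=1$ and Gr\"onwall's inequality, this gives a uniform-in-time bound $\mathbb{E}\norm{\xi_t}^k\le C_k<\infty$. A mild technicality is ensuring the local martingale piece of the It\^o decomposition is a genuine martingale; standard localization by stopping times $\tau_n=\inf\{t:\norm{\xi_t}\ge n\}$ handles this and passing $n\to\infty$ is justified by Fatou's lemma.

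For the splitting scheme \eqref{eqn_discrete_splitting}, the update reads
\[
\tilde{\xi}_{k+1}=e^{-\gamma h}\tilde{\xi}_k-\tfrac{1-e^{-\gamma h}}{\gamma}\,T_{g_k}L_{g_k^{-1}}\nabla U(g_k)+\eta_k,
\]
where $\eta_k\sim\mathcal{N}(0,(1-e^{-2\gamma h})I)$ is independent of $\tilde{\xi}_k$. Taking norms, applying Minkowski's inequality in $L^k(\mathbb{P})$, and using $\norm{T_{g_k}L_{g_k^{-1}}\nabla U(g_k)}\le LD$ together with the finiteness of every Gaussian moment yields the recursion
\[
\bigl(\mathbb{E}\norm{\tilde{\xi}_{k+1}}^k\bigr)^{1/k}\le e^{-\gamma h}\bigl(\mathbb{E}\norm{\tilde{\xi}_k}^k\bigr)^{1/k}+\tfrac{1-e^{-\gamma h}}{\gamma}LD+\bigl(\mathbb{E}\norm{\eta_k}^k\bigr)^{1/k}.
\]
Since the contraction factor $e^{-\gamma h}<1$ and the additive terms are finite constants, iterating this geometric recursion from $\tilde{\xi}_0=0$ produces a uniform-in-$k$ bound $\sup_k\mathbb{E}\norm{\tilde{\xi}_k}^k\le C_k<\infty$, completing the proof.

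The only step that is non-routine is the claim $\norm{T_gL_{g^{-1}}\nabla U(g)}\le LD$; everything else is a standard OU moment estimate. I expect the main obstacle is therefore just this uniform bound on the drift, which follows from Assumption \ref{assumption_L_smooth} by integrating the geodesic-Lipschitz inequality from a fixed reference point and using that the compact Lie group has finite Riemannian diameter under the bi-invariant metric of Lemma \ref{lemma_ad_self_adjoint}.
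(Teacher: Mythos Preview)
Your proposal is correct and essentially equivalent in spirit to the paper's argument, though the details differ. The paper does not use the single Lyapunov function $V(\xi)=(1+\norm{\xi}^2)^{k/2}$; instead it computes $\frac{d}{dt}\mathbb{E}\norm{\xi}^k$ directly (via the Tanaka formula for $k=1$, then It\^o for higher $k$) and bootstraps inductively: the Laplacian term $\gamma\Delta\norm{\xi}^k=\gamma k(k-1)\norm{\xi}^{k-2}$ and the drift term $k\norm{\xi}^{k-2}\ip{\xi}{\nabla U}$ are each bounded using the previously obtained $C_{k-1}$ and $C_{k-2}$, yielding an explicit recursion $C_k=\frac{LD\,C_{k-1}}{\gamma}+(k-1)C_{k-2}$. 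Your use of Young's inequality to close the estimate at a single $k$ is cleaner and avoids the induction, but the paper's recursion produces the explicit constants $C_2,C_4$ that are later quoted in Corollary~\ref{cor_rho_upper_bounded} and Theorem~\ref{thm_error_propagation}, so from the paper's point of view the inductive form is not merely stylistic. Your treatment of the discrete scheme via Minkowski's inequality and the geometric contraction factor $e^{-\gamma h}$ is correct and in fact more explicit than the paper's proof, which focuses on the continuous SDE and does not write out the discrete case separately.
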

\begin{proof}[Proof of Lemma \ref{lemma_bound_xi_power}]
    The drift term for momentum has magnitude $\le LD$ and by Tanaka formula, we have
    \begin{align*}
        \frac{\d}{\d t}\mathbb{E}\norm{\xi}&=-\gamma \mathbb{E}\norm{\xi}-\mathbb{E}\ip{\frac{\xi}{\norm{\xi}}}{\nabla f}\\
        &\le -\gamma \mathbb{E}\norm{\xi}+LD
    \end{align*}
    as a result, $\mathbb{E}\norm{\xi} \le \frac{LD}{\gamma}:=C_1$.
    \begin{align*}
        \frac{\d}{\d t}\mathbb{E}\norm{\xi}^2&=-2\gamma\mathbb{E}\norm{\xi}^2-2\mathbb{E}\ip{\xi}{\nabla f}+m\\
        &\le -2\gamma \mathbb{E}\norm{\xi}^2+2LDC_1+m
    \end{align*}
    As a result, 
    \begin{equation*}
        \mathbb{E}\norm{\xi}^2\le \frac{2LDC_1+m}{2\gamma}:=C_2
    \end{equation*}
    We prove by induction. Suppose for $i=1, \dots k-1$ $\mathbb{E}\norm{\xi}^i\le C_i$, we have for $k\ge 3$
    \begin{align*}
        \frac{\d}{\d t}\mathbb{E}\norm{\xi}^k&=-\gamma k\mathbb{E}\norm{\xi}^k-k\mathbb{E}\norm{\xi}^{k-2}\ip{\xi}{\nabla f}+\gamma\mathbb{E}\Delta\norm{\xi}^k\\
        &=-\gamma k\mathbb{E}\norm{\xi}^k-k\mathbb{E}\norm{\xi}^{k-2}\ip{\xi}{\nabla f}+\gamma k(k-1)\mathbb{E}\norm{\xi}^{k-2}\\
        &\le -\gamma k\mathbb{E}\norm{\xi}^k+kLDC_{k-1}+\gamma k (k-1) C_{k-2}
    \end{align*}
    Since we have $\xi(0)\equiv 0$, we have $\mathbb{E}\norm{\xi}^k\le \frac{LDC_{k-1}}{\gamma}+(k-1)C_{k-2}:=C_k<\infty$
\end{proof}

\begin{proof}[Proof of Thm. \ref{thm_local_error_W2}]
We consider the evolution of $\mathbb{E}d^2((g_t, \xi_t), (\tilde{g}_t, \tilde{\xi}_t))$, where $(g_t, \xi_t)$ and $(\tilde{g}_t, \tilde{\xi}_t)$ are 2 trajectories strating from the same point $(g_0, \xi_0)$.
\begin{align*}
    &\frac{\d}{\d t}\mathbb{E}d^2((g_t, \xi_t), (\tilde{g}_t, \tilde{\xi}_t))\\
    &=\frac{\d}{\d t}\mathbb{E}d^2(g_t, \tilde{g}_t)+\frac{\d}{\d t} \mathbb{E}d^2(\xi_t,\tilde{\xi}_t)
\end{align*}
We quantify $\frac{\d}{\d t}\mathbb{E}d^2(\xi_t,\tilde{\xi}_t)$ first:
\begin{align*}
    \frac{\d}{\d t}\mathbb{E}\norm{\tilde{\xi}_t- \xi_t}^2&\le 2\mathbb{E}\left[\norm{\tilde{\xi}_t- \xi_t}\norm{\nabla U(g_t)-\nabla(g_0)}\right]\\
    &\le 2L\mathbb{E}\left[\norm{\tilde{\xi}_t- \xi_t}d(g_t,g_0)\right]\\
    &\le 2L\mathbb{E}\left[\norm{\tilde{\xi}_t- \xi_t}\int_0^t \norm{\xi_t}\d t\right]\\
    &\le L\mathbb{E}\norm{\tilde{\xi}_t- \xi_t}^2+L\mathbb{E}\left(\int_0^t \norm{\xi_t}\d t\right)^2\\
    &\le L\mathbb{E}\norm{\tilde{\xi}_t- \xi_t}^2+t^2 L C_2
\end{align*}
where the last line is because Cauchy-Schwarz inequality and Lemma \ref{lemma_bound_xi_power} gives us
\begin{align*}
    &\mathbb{E}\left(\int_0^t \norm{\xi_t}\d t\right)^2\le t\int_0^t \mathbb{E}\norm{\xi_t}^2\d t\le t^2 C_2
\end{align*}

Gronwall's inequality gives $\mathbb{E}\norm{\tilde{\xi}_t- \xi_t}^2\le x$ where $x$ is the solution of ODE $\dot{x}=Lx+C_2 L t^2$ with initial condition $x(0)=0$.

Next, we bound $\mathbb{E}d^2(\tilde{g}_t, g_t)$. When we have $\tilde{g}_t^{-1} g_t \notin N$, Ito's formula gives
\begin{align*}
    &\d d^2(\tilde{g}_t, g_t)\\
    &= 2\ip{\log g_t^{-1}\tilde{g}_t}{\d\tilde{g}_t-\d g_t}\d t+\gamma t^2 \sum_{ijk}\ip{\frac{1-e^{\ad_{t\tilde{\xi}_t}}}{\ad_{t\tilde{\xi}_t}}e_k}{e_i}\ip{\frac{1-e^{\ad_{t\tilde{\xi}_t}}}{\ad_{t\tilde{\xi}_t}}e_k}{e_j}\frac{\partial^2}{\partial e_i \partial e_j} d^2(\tilde{g}_t, g_t) \d t
\end{align*}
where the second term is the Ito's correction. It can be bounded by Lemma \ref{lemma_d2_ito_correction_upper_bound} and \ref{lemma_dexp_upper_bound}.
\begin{align*}
    \sum_{ijk}\ip{\frac{1-e^{\ad_{t\tilde{\xi}_t}}}{\ad_{t\tilde{\xi}_t}}e_k}{e_i}\ip{\frac{1-e^{\ad_{t\tilde{\xi}_t}}}{\ad_{t\tilde{\xi}_t}}e_k}{e_j}\frac{\partial^2}{\partial e_i \partial e_j} d^2(\tilde{g}_t, g_t)\le 2C m^3 d(\tilde{g}_t, g_t)
\end{align*}

After taking expectations, we only need to consider $\tilde{g}_t^{-1}g_t \in \G\backslash N$ since $N$ is zero measured (Cor. \ref{cor_absolute_continuous}). Also, the local martingale part can be eliminated. Using Lemma \ref{lemma_d2exp_upper_bound} and \ref{lemma_bound_xi_power} we have
\begin{align*}
    &\frac{\d}{\d t}\mathbb{E}d^2(\tilde{g}_t, g_t)=\frac{\d}{\d t}\mathbb{E}_{\tilde{g}_t^{-1}g_t\notin N}d^2(\tilde{g}_t, g_t)\\
    &=2\mathbb{E}\ip{\log g_t^{-1}\tilde{g}_t}{\tilde{\xi}_t-\xi_t+t\frac{1-e^{\ad_{t\tilde{\xi}_t}}}{\ad_{t\tilde{\xi}_t}} \left(-\gamma\tilde{\xi}_t-\lt{g_0}\nabla U(\tilde{g}_0)\right)+\gamma t^2\sum_{ij}\d^2 \exp(t\tilde{\xi}_t)(e_i, e_j)}\\
    &+\gamma t^2 \sum_{ijk}\ip{\frac{1-e^{\ad_{t\tilde{\xi}_t}}}{\ad_{t\tilde{\xi}_t}}e_k}{e_i}\ip{\frac{1-e^{\ad_{t\tilde{\xi}_t}}}{\ad_{t\tilde{\xi}_t}}e_k}{e_j}\frac{\partial^2}{\partial e_i \partial e_j} d^2(\tilde{g}_t, g_t)\\
    &\le \mathbb{E}d^2(\tilde{g}_t, g_t)+\mathbb{E}\norm{\tilde{\xi}_t-\xi_t+t\frac{1-e^{\ad_{t\tilde{\xi}_t}}}{\ad_{t\tilde{\xi}_t}} \left(-\gamma\tilde{\xi}_t-\lt{g_0}\nabla U(\tilde{g}_0)\right)+\gamma t^2\sum_{ij}\d^2 \exp(t\tilde{\xi}_t)(e_i, e_j)}^2\\
    &+2\gamma t^2 Cm^3\mathbb{E}d(\tilde{g}_t, g_t)\\
    &\le \mathbb{E}d^2(\tilde{g}_t, g_t)+4\left(\mathbb{E}\norm{\tilde{\xi}_t-\xi_t}^2+\gamma^2t^2 \mathbb{E}\norm{\tilde{\xi}_t}^2 +t^2 \norm{\nabla U(\tilde{g}_0)}^2+\gamma^2 t^4 C^2 m^2\right)\\
    &+\gamma t^2 Cm^3 \mathbb{E}d^2(\tilde{g}_t, g_t)+\gamma t^2 Cm^3 \\
    &\le \mathbb{E}d^2(\tilde{g}_t, g_t)+4\left(\mathbb{E}\norm{\tilde{\xi}_t-\xi_t}^2+\gamma^2t^2 C_2 +t^2 LD+\gamma^2 t^4 C^2 m^2\right)+\gamma t^2 Cm^3 \mathbb{E}d^2(\tilde{g}_t, g_t)+\gamma t^2 Cm^3\\
    &= \left(1+\gamma Cm^3  t^2\right)\mathbb{E}d^2(\tilde{g}_t, g_t)+4\mathbb{E}\norm{\tilde{\xi}_t-\xi_t}^2+\left(4\gamma^2 C_2 + 4LD+\gamma  Cm^3\right) t^2+4\gamma^2  C^2 m^2 t^4
\end{align*}

Gronwall's inequality gives $\mathbb{E}d^2(\tilde{g}_t, g_t)\le y(t)$ where $y$ is the solution of ODE 
\begin{equation*}
    \dot{y}=\left(1+\gamma Cm^3  t^2\right)y+4x+\left(4\gamma^2 C_2 + 4LD+\gamma  Cm^3\right) t^2+4\gamma^2  C^2 m^2 t^4
\end{equation*}
with initial condition $y(0)=0$.

To summarize, the ODE for $x(t)$ and $t(t)$ are given by
\begin{equation}
\label{eqn_xy_ode}
    \begin{cases}
        \dot{x}=Lx+C_2 L t^2\\
        \dot{y}=\left(1+\gamma Cm^3  t^2\right)y+4x+\left(4\gamma^2 C_2 + 4LD+\gamma  Cm^3\right) t^2+4\gamma^2  C^2 m^2 t^4
    \end{cases}
    \begin{cases}
        x(0)=0\\
        y(0)=0
    \end{cases}
\end{equation}
Its solution can be analytically obtained, but omitted for the sake of length.

\end{proof}

\subsection{Proof of Thm. \ref{thm_error_propagation}}

\begin{proof}[Proof of Lemma \ref{lemma_triangular_ineq_rho}]
By the definition of semi-distance $\rho$ in Eq. \eqref{eqn_rho}, we have
    \begin{align*}
        &\rho((\hat{g}, \hat{\xi}), (\tilde{g}, \tilde{\xi}))-\rho((\hat{g}, \hat{\xi}), (g, \xi))\\
        &=f(r((\hat{g}, \hat{\xi}), (\tilde{g}, \tilde{\xi})))\left(1+\beta\norm{\hat{\xi}-\tilde{\xi}}^2\right)\\
        &-f(r((\hat{g}, \hat{\xi}), (g, \xi))\left(1+\beta\norm{\hat{\xi}-\xi}^2\right)\\
        &=\beta f(r((\hat{g}, \hat{\xi}), (\tilde{g}, \tilde{\xi})))\left(\norm{\hat{\xi}-\tilde{\xi}}^2-\norm{\hat{\xi}-\xi}^2\right)\\
        &+\left(f(r((\hat{g}, \hat{\xi}), (\tilde{g}, \tilde{\xi})))-f(r((\hat{g}, \hat{\xi}), (g, \xi)\right)\left(1+\beta\norm{\hat{\xi}-\xi}^2\right)
    \end{align*}
    We will bound the 2 terms in the last equation separately.
    
    The first term:
    \begin{align*}
    &\norm{\hat{\xi}-\tilde{\xi}}^2-\norm{\hat{\xi}-\xi}^2\\
    &=\norm{\tilde{\xi}-\xi}\norm{2\hat{\xi}-\tilde{\xi}-\xi}\\
    &\le 2\delta\norm{\hat{\xi}-\xi}+\delta^2 \\
    &\le \norm{\hat{\xi}-\xi}^2+2\delta^2
    \end{align*}
    and as a result,
    \begin{align*}
        &\beta f(r((\hat{g}, \hat{\xi}), (g, \xi)))\left(\norm{\hat{\xi}-\tilde{\xi}}^2-\norm{\hat{\xi}-\xi}^2\right)\\
        &\le \beta f(r((\hat{g}, \hat{\xi}), (g, \xi)))\left(\norm{\hat{\xi}-\xi}^2+2\delta^2\right) \\
        &\le \rho((\hat{g}, \hat{\xi}), (g, \xi))\delta+2\beta f(R) \delta^2
    \end{align*}

    The second term: Because $f'\le 1$
    \begin{align*}
        &(f(r((\hat{g}, \hat{\xi}), (\tilde{g}, \tilde{\xi})))-f(r((\hat{g}, \hat{\xi}), (g, \xi))))\left(1+\beta\norm{\hat{\xi}-\xi}^2\right)\\
        &\le (r((\hat{g}, \hat{\xi}), (\tilde{g}, \tilde{\xi})))-r((\hat{g}, \hat{\xi}), (g, \xi)))\left(1+\beta\norm{\hat{\xi}-\xi}^2\right)\\
        &\le\left(1+\alpha+\gamma^{-1}\right)\left(1+\beta\norm{\hat{\xi}-\xi}^2\right)\delta\\
        &\le\left(1+\alpha+\gamma^{-1}\right)\frac{f(r((\hat{g}, \hat{\xi}), (g, \xi))))}{f(R)}\left(1+\beta\norm{\hat{\xi}-\xi}^2\right)\delta\\
        &=\left(1+\alpha+\gamma^{-1}\right)\frac{1}{f(R)}\rho((\hat{g}, \hat{\xi}), (g, \xi))\delta
    \end{align*}
    
    Sum them up we have
    \begin{align*}
        &\rho((\hat{g}, \hat{\xi}), (\tilde{g}, \tilde{\xi}))-\rho((\hat{g}, \hat{\xi}), (g, \xi))\\
         &\le \left(1+\frac{1+\alpha+\gamma^{-1}}{f(R)} \right)\rho((\hat{g}, \hat{\xi}), (g, \xi))\delta+2\beta f(R)\delta^2
    \end{align*}
    We have the desired result by defining 
    \begin{equation}
    \label{eqn_A}
        \begin{cases}
            A_1:=1+\frac{1+\alpha+\gamma^{-1}}{f(R)}\\
            A_2:=2\beta f(R)
        \end{cases}
    \end{equation}
\end{proof}

\begin{proof}[Proof of Lemma \ref{lemma_order_W2_error}]
    We focus on $x$ first. Since $\dot{x}\le Lx+C_2 L t_0^2$, we have $x(t)\le C_2 t_0^2 (e^{Lt}-1)= \mathcal{O}(t^3)$.

    For $y$, we have 
    \begin{align*}
        \dot{y}&\le \left(1+\gamma Cm^3  t_0^2\right)y+4x+\left(4\gamma^2 C_2 + 4LD+\gamma  Cm^3\right) t_0^2+4\gamma^2  C^2 m^2 t_0^4\\
        &\le \left(1+\gamma Cm^3  t_0^2\right)y+4C_2 t_0^2 e^{Lt}+\left(4\gamma^2 C_2-4C_2+ 4LD+\gamma  Cm^3\right) t_0^2+4\gamma^2  C^2 m^2 t_0^4
    \end{align*}
    Solving the ODE explicitly, we have
    \begin{align*}
        y\le \frac{4\gamma^2 C_2-4C_2+ 4LD+\gamma  Cm^3+4\gamma^2  C^2 m^2 t_0^2}{1+\gamma Cm^3  t_0^2}t_0^2\left(e^{\left(1+\gamma Cm^3  t_0^2\right)t}-1\right)+\frac{e^{\left(1+\gamma Cm^3  t_0^2\right) t}-e^{Lt}}{1+\gamma Cm^3  t_0^2-L}4C_2 t_0^2
    \end{align*}
    Since we have $e^t-1= \mathcal{O}(t)$, we have $y(t)= \mathcal{O}(t^3)$.
\end{proof}

\begin{corollary}
\label{cor_rho_upper_bounded}
Suppose the Lie group $\G$ is compact with diameter $D$. For any $(g, \xi)$ and $(\hat{g}, \hat{\xi})$ in the following cases:
\begin{itemize}
    \item $(g, \xi)$ is the solution of sampling SDE Eq. \eqref{eqn_sampling_SDE} at time $t$ for any $t\ge 0$.\\
    $(\hat{g}, \hat{\xi})$ is the solution of sampling SDE Eq. \eqref{eqn_sampling_SDE} at time $s$ for any $s\ge 0$.
    \item $(g, \xi)$ is the solution of numerical discretization Eq. \eqref{eqn_discrete_splitting} at step $k$ for any $k\in \mathbb{N}$. \\
    $(\hat{g}, \hat{\xi})$ is the solution of numerical discretization Eq. \eqref{eqn_discrete_splitting} at step $l$ for any $l\in \mathbb{N}$.
    \item $(g, \xi)$ is the solution of sampling SDE Eq. \eqref{eqn_sampling_SDE} at time $t$ for any $t\ge 0$. \\
    $(\hat{g}, \hat{\xi})$ is the solution of numerical discretization Eq. \eqref{eqn_discrete_splitting}  at step $k$ for any $k\in \mathbb{N}$.
\end{itemize}
then we have 
    \begin{align*}
        \mathbb{E}\rho^2((\hat{g}, \hat{\xi}), (g, \xi))&\le f(R_1)^2(1+8C_2+16C_4)
    \end{align*}
    where $C_2$ and $C_4$ are given in Lemma \ref{lemma_bound_xi_power}.
\end{corollary}
\begin{proof}[Proof of Cor. \ref{cor_rho_upper_bounded}]
By our design that $f$ is upper bounded by $f(R)$, we have
    \begin{align*}
        \rho((g, \xi), (\hat{g}, \hat{\xi}))&\le f(R_1)(1+\norm{\xi-\hat{\xi}}^2)\\
        &\le f(R_1)(1+2\norm{\xi}^2+2\norm{\hat{\xi}}^2)
    \end{align*}
    and by Cauchy-Schwarz inequality
    \begin{align*}
        \rho^2 ((g, \xi), (\hat{g}, \hat{\xi}))&\le f(R_1)^2\left(1+4\norm{\xi}^2+4\norm{\hat{\xi}}^2+8\norm{\xi}^4+8\norm{\hat{\xi}}^4\right)
    \end{align*}
    By taking expectation and Lemma \ref{lemma_bound_xi_power}, we have the corollary proved.
\end{proof}

\begin{proof}[Proof of Thm. \ref{thm_error_propagation}]
In this proof, we denote $\delta:=d((\tilde{g}_t, \tilde{\xi}_t), (g_t, \xi_t))$. Using the modified triangle inequality (Lemma \ref{lemma_triangular_ineq_rho}), 
    \begin{align*}
        &\mathbb{E}\rho((\hat{g}_t, \hat{\xi}_t), (\tilde{g}_t, \tilde{\xi}_t))\\
        &\le\mathbb{E}\left[(1+A_1\delta)\rho((\hat{g}_t, \hat{\xi}_t), (g_t, \xi_t))+A_2\delta^2\right]\\
        &\le\exp(-ct)\mathbb{E}\rho((\hat{g}_0, \hat{\xi}_0), (g_0, \xi_0))+A_1\mathbb{E}\delta\rho((\hat{g}_t, \hat{\xi}_t), (g_t, \xi_t))+A_2\mathbb{E}\delta^2\\
        &\le\exp(-ct)\mathbb{E}\rho((\hat{g}_0, \hat{\xi}_0), (g_0, \xi_0))+\frac{A_1}{2}\mathbb{E}\left[\frac{\delta^2}{t^\frac{3}{2}}+t^\frac{3}{2}\rho^2((\hat{g}_t, \hat{\xi}_t), (g_t, \xi_t))\right]+A_2\mathbb{E}\delta^2
    \end{align*}
    where the last step is because of the Cauchy-Schwarz inequality. By Cor. \ref{cor_rho_upper_bounded}, we have $\mathbb{E}\rho$ and $\mathbb{E}\rho^2$ is bounded and
    \begin{align*}
        &\mathbb{E}\rho((\hat{g}_t, \hat{\xi}_t), (\tilde{g}_t, \tilde{\xi}_t))\\
        &\le\exp(-ct)\mathbb{E}\rho((\hat{g}_0, \hat{\xi}_0), (g_0, \xi_0))+\left(\frac{A_1}{2t^\frac{3}{2}}+A_2\right)\mathbb{E}\delta^2+\frac{A_1}{2}f(R)^2(1+8C_2+16C_4)t^\frac{3}{2}
    \end{align*}
\end{proof}

\subsection{Proof of Cor. \ref{cor_error_Erho}, Thm. \ref{thm_global_error_Wrho} and Thm. \ref{thm_global_error_W2}}
\begin{proof}[Proof of Cor. \ref{cor_error_Erho}]
    We apply Thm. \ref{thm_error_propagation} recurrently, which gives
    \begin{align*}
        &\mathbb{E}\rho((\hat{g}_{kh}, \hat{\xi}_{kh}), (\tilde{g}_k, \tilde{\xi}_k))\\
        &\le e^{-ch} \mathbb{E}\rho((\hat{g}_{(k-1)h}, \hat{\xi}_{(k-1)h}), (\tilde{g}_{(k-1)h}, \tilde{\xi}_{(k-1)h})), (\tilde{g}_{k-1}, \tilde{\xi}_{k-1}))+E(h)\\
        &\le e^{-2ch} \mathbb{E}\rho((\hat{g}_{(k-2)h}, \hat{\xi}_{(k-1)h}), (\tilde{g}_{(k-2)h}, \tilde{\xi}_{(k-2)h})), (\tilde{g}_{k-2}, \tilde{\xi}_{k-2}))+(1+e^{-ch})E(h)\\
        &\le \cdots\\
        &\le e^{-ckh} \mathbb{E}\rho((\hat{g}_{0}, \hat{\xi}_{0}), (\tilde{g}_{0}, \tilde{\xi}_{0})), (\tilde{g}_{0}, \tilde{\xi}_{0}))+\sum_{i=0}^k e^{-cih}E(h)\\
        &\le e^{-ckh} \mathbb{E}\rho((\hat{g}_{0}, \hat{\xi}_{0}), (\tilde{g}_{0}, \tilde{\xi}_{0})), (\tilde{g}_{0}, \tilde{\xi}_{0}))+\frac{E(h)}{1-\exp(-ch)}
    \end{align*}
\end{proof}

\begin{proof}[Proof of Thm. \ref{thm_global_error_Wrho}]
    Let $\Law(\hat{g}_0, \hat{\xi}_0)$ follows the target distribution and $(\hat{g}_t, \hat{\xi}_t)$ is the solution of the sampling dynamics \eqref{eqn_sampling_SDE}. As a result, $\Law(\hat{g}_t, \hat{\xi}_t)\sim\nu_*, \forall t$. We make $(\hat{g}_0, \hat{\xi}_0)$ and $(\tilde{g}_0, \tilde{\xi}_0)$ are paired in the optimal way under $\rho$, i.e., $\mathbb{E}\rho((\hat{g}_0, \hat{\xi}_0), (\tilde{g}_0, \tilde{\xi}_0))=W_\rho(\Law(\hat{g}_0, \hat{\xi}_0), \Law(\tilde{g}_0, \tilde{\xi}_0))$. 
    
    Cor. \ref{cor_error_Erho} gives
    \begin{align*}
        &W_\rho(\tilde{\nu}_k, \nu_*)\\
        &W_\rho(\Law(\hat{g}_k, \hat{\xi}_k), \Law(\tilde{g}_k, \tilde{\xi}_k))\\
        &\le \mathbb{E}\rho((\hat{g}_{kh}, \hat{\xi}_{kh}), (\tilde{g}_k, \tilde{\xi}_k))\\
        &\le e^{-ckh} \mathbb{E}\rho((\hat{g}_0, \hat{\xi}_0), (\tilde{g}_0, \tilde{\xi}_0))+\frac{E(h)}{1-\exp(-ch)}\\
        &= e^{-ckh} W_\rho(\Law(\hat{g}_0, \hat{\xi}_0), \Law(\tilde{g}_0, \tilde{\xi}_0))+\frac{E(h)}{1-\exp(-ch)}
    \end{align*}
\end{proof}

\begin{proof}[Proof of Thm. \ref{thm_global_error_W2}]
    This is a direct corollary of Lemma \ref{cor_equivalence_W2} and Thm. \ref{thm_global_error_Wrho}.
\end{proof}

\end{document}